\documentclass[12pt]{article}
\usepackage[utf8]{inputenc}
\usepackage[margin=1in]{geometry}
\usepackage{graphicx}
\usepackage{booktabs}
\usepackage{array}
\usepackage{paralist}
\usepackage{verbatim}

\usepackage[nottoc,notlot,notlof]{tocbibind}

\usepackage{amsfonts}
\usepackage{amsmath}
\usepackage{amssymb}
\usepackage{amsthm}
\usepackage{mathrsfs}
\usepackage{mathtools}
\usepackage{stmaryrd}
\usepackage{esint}
\usepackage{bbm}
\usepackage[dvipsnames]{xcolor}
\usepackage{float}
\usepackage{xfrac}
\usepackage{nicefrac}

\usepackage{newtxtext}
\usepackage{newtxmath}
\usepackage{anyfontsize}

\usepackage{enumerate}
\usepackage{enumitem}
\usepackage{longtable}

\usepackage[colorlinks=true, pdfstartview=FitV, linkcolor=blue, citecolor=blue, urlcolor=blue]{hyperref}
\usepackage[normalem]{ulem}\usepackage{cleveref}

\usepackage{tikz}
\usetikzlibrary{calc}
\usepackage{pgf}
\usetikzlibrary{external}
\numberwithin{equation}{section}
\numberwithin{figure}{section}

\newtheorem{theorem}{Theorem}[section]
\newtheorem*{theorem*}{Theorem}
\newtheorem{lemma}[theorem]{Lemma}
\newtheorem{proposition}[theorem]{Proposition}
\newtheorem{corollary}[theorem]{Corollary}
\theoremstyle{definition}

\newtheorem{remark}[theorem]{Remark}

\newcommand*{\NN}{\ensuremath{\mathbb{N}}}

\newcommand*{\TT}{\ensuremath{\mathbb{T}}}
\newcommand*{\ZZ}{\ensuremath{\mathbb{Z}}}
\newcommand*{\RR}{\ensuremath{\mathbb{R}}}
\newcommand*{\CC}{\ensuremath{\mathbb{C}}}

\newcommand{\eps}{\varepsilon}

\DeclarePairedDelimiter{\norm}{\lVert}{\rVert}

\let\tmpsum\sum
\renewcommand{\sum}{\tmpsum\nolimits}

\usepackage{titlesec}
\newcommand{\addperiod}[1]{#1.}
\titleformat{\section}{\centering\normalfont\Large}{\thesection.}{0.5em}{}
\titleformat*{\subsection}{\bfseries}
\titleformat{\subsubsection}[runin]{\normalfont\bfseries}{\thesubsubsection.}{0.5em}{\addperiod}
\titleformat*{\subsubsection}{\normalfont\itshape}
\titleformat*{\paragraph}{\bfseries}
\titleformat*{\subparagraph}{\large\bfseries}

\title{Multi-Spectral QPAT with Frequency-Averaged Measurements}

\author{
Yunhao Sun\thanks{Center for Applied Mathematics, Cornell University.
{\footnotesize \href{mailto: ys2337@cornell.edu}{ys2337@cornell.edu}.}
}\qquad
Yang Yang\thanks{Department of Computational Mathematics, Science and Engineering, Michigan State University.
{\footnotesize \href{mailto: yangy5@msu.edu}{yangy5@msu.edu}.}
}\qquad
Yunan Yang\thanks{Department of Mathematics, Cornell University.
{\footnotesize \href{mailto: yunan.yang@cornell.edu}{yunan.yang@cornell.edu}.}
}
}

\date{}
\begin{document}

    \maketitle

   \begin{abstract}
        We study some inverse problems in a frequency-averaged model for quantitative photoacoustic tomography, in which the optical parameters are modeled by a finite number of spectral functions and spatial coefficients, and the measurements are given by internal observables averaged over a spectral bandwidth. Using multiple measurements generated by suitable complex geometrical optics solutions, we establish uniqueness and Lipschitz-type stability for the associated linearized inverse problem for diffusion and absorption recovery. This linearized stability further yields local H\"older-type stability for the nonlinear inverse problem via an abstract linear-to-nonlinear argument.
    \end{abstract}

    \section{Introduction}

    Photoacoustic tomography (PAT) is a hybrid modality that combines the sensitivity of optical imaging to tissue composition with the spatial resolution of ultrasound. In PAT, a short near-infrared light pulse is delivered to an optically heterogeneous medium, where absorbed energy produces transient heating and thermoelastic expansion. The resulting pressure field propagates as ultrasound and is measured at the boundary. Because acoustic scattering in tissue is substantially weaker than optical scattering, PAT can image optical contrast at higher spatial resolution than standalone optical imaging. We refer to~\cite{Beard2011,Kuchment2012Hybrid,KuchmentKunyansky2011,Scherzer2011,Wang2004,Wang2008} and the references therein for overviews of its physical principles and applications.

    PAT is typically formulated as two coupled inverse problems. The acoustic stage recovers the absorbed-energy distribution from boundary ultrasound measurements as an inverse source problem for the wave equation. The optical stage, known as \textit{quantitative PAT (QPAT)}, then recovers tissue optical parameters from this distribution. In the diffusive regime of light propagation, that is, when scattering dominates absorption and propagation distances exceed the transport mean free path, light propagation can be well approximated by a spatial diffusion equation. The resulting model is known as \textit{diffusive QPAT}. 
    We remark that PAT models other than the two-stage model have also been proposed in the literature; see, for instance,~\cite{Ghandriche2023,Pan2025,Haltmeier2015,Ding2015,Frederick2018,Zhao2021}. In this paper, we adopt the two-stage model and focus on diffusive QPAT. 
    
    The inverse problem in diffusive QPAT aims to recover optical parameters from the reconstructed absorbed energy under prescribed illuminations. Conventional formulations typically idealize the incident light as monochromatic or sufficiently narrowband, so that light propagation and optical coefficients are modeled at a single frequency. 
    This approximation is appropriate when the source bandwidth is negligible compared with the frequency scale over which the tissue optical properties vary. 
    However, in practice, every light source has a nonzero spectral bandwidth, and illuminations may interact with frequency-dependent absorption and scattering across a nontrivial range of frequencies. If the resulting photoacoustic response is not spectrally resolved, the measured absorbed energy represents the combined contribution of these frequency components. Motivated by this setting, the present work considers a frequency-dependent diffusion model driven by polychromatic illumination and studies the recovery of the underlying spatial coefficient components from frequency-averaged internal measurements.

    \subsection{Frequency-Resolved QPAT}
    We begin with the conventional diffusive QPAT model, in which the light source is assumed to be monochromatic, so that light propagation and optical parameters are modeled at a single frequency. We refer to this model as the \textit{frequency-resolved diffusive QPAT model}. 

    Let $\Omega\subseteq \RR^n$ $(n \geq 3)$ be a bounded domain with smooth boundary, modeling the soft tissue. The spatial distribution of the radiance in the diffusive regime satisfies the following boundary value problem:
    \begin{equation}\label{QPAT}
        \left\{
        \begin{aligned}
            L u := -\nabla\cdot (D(x) \nabla u(x)) + A(x) u(x) &= 0 \quad && x\in \Omega,\\
            u(x) \vert_{\partial\Omega} &= f(x) \quad &&x\in \partial\Omega.
        \end{aligned}
        \right.
    \end{equation}
    Here, $D$ is the \textit{diffusion coefficient}, $A$ is the \textit{absorption coefficient}, $f(x)$ models the user-specified \textit{boundary illumination}. The operator $L$, defined by the left-hand side of the equation, is a second-order elliptic operator.
    The measurement, that is, the absorbed energy, is modeled as 
    $\mathfrak{m}(x) := A(x)u(x)$.
    (Strictly speaking, the energy absorption process in QPAT involves another optical parameter known as the \textit{Gr\"uneisen coefficient}. In this paper, we assume that the Gr\"uneisen coefficient is known and focus only on the pair of optical parameters $(D,A)$.)
    The inverse problem in frequency-resolved diffusive QPAT is to recover $D$ and $A$ from one or more internal measurements of the form $\mathfrak{m}(x)$, given one or more user-specified boundary illuminations $f$.

    \textbf{Literature Review:} The inverse problem for the frequency-resolved QPAT model has been studied extensively. In particular, uniqueness and stability for frequency-resolved diffusive QPAT have been established using well-chosen internal data sets~\cite{Bal2010}, a priori illuminations in three dimensions~\cite{Alessandrini2017}, layered-medium assumptions~\cite{RenTriki2019}, two point-source measurements~\cite{Bonnetier2022}, partial boundary illuminations~\cite{chen2012quantitative}, and partial internal data~\cite{TrikiXue2021}. Linearized results include semi-Fredholm stability~\cite{Kuchment2012}, stability from one internal measurement~\cite{Montalto2013}, and generic uniqueness for nonlocal data~\cite{Kuchment2015}.

    Concerning joint recovery of all the diffusion, absorption, and Gr\"uneisen coefficients, Bal and Ren showed that single-spectral data can recover at most two of them~\cite{Bal2011}. However, uniqueness for all three is available under a piecewise-constant assumption~\cite{Naetar2014}. Multi-spectral data can overcome the single-spectral obstruction under suitable spectral assumptions~\cite{BalRen2012Multispectral}. Related chromophore studies analyze wavelength-dependent fluence in spectral unmixing~\cite{Cox2009Chromophores} and nonlinear model-based estimation of absolute concentrations~\cite{Laufer2010Chromophores}.

    Numerical approaches have also been developed for QPAT imaging. These include iterative absorption recovery~\cite{Cox2006}, simultaneous absorption--scattering inversion using transport and diffusion models~\cite{Tarvainen2012}, hybrid optimization with internal and boundary optical data~\cite{RenGaoZhao2013}, topological-derivative reconstruction~\cite{Aspri2020}, direct inversion in realistic acoustic media~\cite{Javaherian2019}, and finite-element output-least-squares methods with error estimates~\cite{Alberti2026FEM}. Statistical approaches include Bayesian reconstruction~\cite{Tarvainen2013Bayesian}, spectral Bayesian inversion~\cite{Pulkkinen2014Bayesian}, direct estimation from acoustic time series~\cite{Pulkkinen2016Direct}, and adaptive Monte Carlo reconstruction~\cite{Hanninen2024}.

    Although the works above focus primarily on diffusive QPAT, related models incorporating more complex light physics have also been studied in the literature. These include QPAT governed by the radiative transport equation~\cite{Bal2010Transport,Ren2015Transport,Saratoon2013,Mamonov2014} and diffusion models with nonlinear absorption~\cite{Ren2018Nonlinear} and two-photon effects~\cite{Lai2022Nonlinear}.

    \subsection{Frequency-Averaged QPAT}
    The frequency-resolved model is appropriate only for highly narrowband illumination. In practice, light sources have nonzero spectral bandwidths, over which the diffusion and absorption coefficients vary. When the absorbed energy does not resolve optical frequency, the measurement consists of frequency-dependent contributions over the entire illumination band. To account for such effects, we introduce the following frequency-dependent diffusive QPAT model:
    \begin{equation}\label{frequency-QPAT}
        \left\{
        \begin{aligned}
            L_t u := -\nabla\cdot (D(x, t) \nabla u(x, t)) + A(x, t) u(x, t) &= 0 \quad &&(x,t)\in \Omega\times \TT^d,\\
            u(x, t) \vert_{\partial\Omega} &= f(x, t) \quad &&(x, t)\in \partial\Omega\times\TT^d.
        \end{aligned}
        \right.
    \end{equation}
    Here, $t$ is the frequency parameter, and $\TT^d := [0,1)^d$ is the $d$-dimensional torus representing the rescaled frequency band. In practice, frequency is one-dimensional, corresponding to $d=1$. We consider a general $d$-dimensional parameter so that the model can also describe dependence on other physical parameters (e.g., temperature), if necessary. All notation denotes the same quantities as in the frequency-resolved model, except that they are now $t$-dependent. We refer to this model as the \textit{frequency-averaged diffusive QPAT model}.

    The dependence of the optical parameters $(D,A)$ on $t$ is not arbitrary. Throughout the paper, we assume the following spatial and frequency decompositions hold:    
    \begin{equation} \label{DAdecomposition}
        D(x, t) =  \sum_{i=1}^I p_i(t) D_i(x),\quad A(x, t) = \sum_{j=1}^J q_j(t) A_j(x)
    \end{equation}
    where $p_i(t)$ and $q_j(t)$ are functions of the frequency $t$, and $D_i$ and $A_j$ are functions of the spatial variable $x$.
    The underlying rationale is as follows: In multispectral photoacoustics, biological tissue contains a finite collection of dominant absorbers known as chromophores. According to the Beer--Lambert mixture law, the total absorption is the sum of the contributions of these chromophores. In \eqref{DAdecomposition}, $A_j(x)$ represents the spatial concentration of the $j$th chromophore, and $q_j(t)$ represents its extinction spectrum.
    The decomposition for $A(x,t)$ thus reflects the standard spectral-unmixing model~\cite{Cox2009Chromophores,Laufer2010Chromophores}. On the other hand, the optical diffusion coefficient in the diffusion approximation is taken as $D(x,t)=(3\bigl(A(x,t)+\mu_s'(x,t)\bigr))^{-1}$, where $\mu_s'$ is the reduced scattering coefficient. Over a restricted spectral band, the frequency dependence of the optical properties is typically smooth and governed by a small number of tissue constituents or scattering mechanisms. As a result, $D(x,t)$ can be approximated by a finite number of spectral functions $p_i(t)$ with spatial coefficients $D_i(x)$.

    \textbf{Frequency-Averaged Measurement:} In diffusive frequency-resolved QPAT, the measurement at a fixed frequency $t$ is the absorbed energy $A(x,t)u(x,t)$. A broadband photoacoustic measurement averages the contribution over the frequency band, leading to the following frequency-averaged measurement:
    \begin{equation}\label{frequency-measurement}
        \mathfrak{m}(x) := \int_{\TT^d} A(x, t) u(x, t) dt = \sum^J_{j=1} A_j(x) \int_{\TT^d} u(x, t) q_j(t) dt.
    \end{equation}
    Here, $J$ is the total number of chromophores.

    \textbf{Inverse Problem:} The inverse problem in frequency-averaged diffusive QPAT is as follows: Given the frequency components $\{p_i(t)\}_{i=1}^I$ and $\{q_j(t)\}_{j=1}^J$, we aim to determine the unknown spatial components $\{D_i(x)\}_{i=1}^I$ and $\{A_j(x)\}_{j=1}^J$ by making one or more measurements of the form~\eqref{frequency-measurement}.

    \textbf{The Contribution:}
    This paper concerns the inverse problem for the frequency-averaged diffusive QPAT model. To the best of our knowledge, this model has not been systematically studied in the literature.
    We recast the imaging process as the problem of inverting a nonlinear parameter-to-measurement operator and obtain several uniqueness and stability results. 

    We first analyze the linearized frequency-averaged inverse problem. We prove that the parameter-to-measurement map is Fr\'echet differentiable and show that, for sufficiently regular coefficients and a suitable finite collection of broadband illuminations, the Fr\'echet derivative is injective and satisfies a Lipschitz stability estimate; see Theorem~\ref{main-3}. Thus, infinitesimal changes in the optical parameters are uniquely determined by the corresponding changes in the averaged internal measurements. This establishes local identifiability at the linearized level for the frequency-averaged diffusive QPAT. 

    A key step in our linearized analysis is a refined analysis of the linearized frequency-resolved diffusive QPAT problem. Previous work established stability up to a finite-dimensional kernel associated with an elliptic pseudodifferential system~\cite{Kuchment2012}, as well as generic injectivity~\cite{Kuchment2015}. We show that this kernel is trivial for sufficiently regular coefficients and suitable illuminations, thereby establishing both uniqueness and Lipschitz stability for the frequency-resolved linearized QPAT problem; see Proposition~\ref{prop}. The finite-rank spectral structure of the optical parameters then allows these conclusions to be extended to the frequency-averaged setting.

    We subsequently pass from the linearized problem to the full nonlinear inverse problem by means of a Banach-space inverse function theorem~\cite{Stefanov2009}. This yields local uniqueness and conditional H\"older stability near any sufficiently regular admissible optical parameters; see Theorem~\ref{main-2}. Consequently, optical parameters that are sufficiently close to a reference configuration cannot yield identical averaged measurements, and small measurement errors can lead to quantitatively controlled reconstruction errors, even if the light source is moderately broadband. These results provide a mathematical foundation for the local recovery of frequency-dependent optical properties from frequency-averaged QPAT data.

    The rest of the paper is organized as follows. 
    In \Cref{section-3}, we introduce the notation used throughout the paper and summarize the main results. In \Cref{section-4}, we establish the differentiability of the frequency-averaged measurement with respect to the PDE parameters and derive the linearized measurement map. The uniqueness and stability of the linearized inverse problem are discussed in \Cref{section-5}. \Cref{section-5-1} is dedicated to the classical, frequency-resolved linearized QPAT problem, while \Cref{section-5-2} is dedicated to the frequency-averaged linearized QPAT problem. Finally, \Cref{section-6} establishes a local stability estimate of H\"older type for the frequency-averaged nonlinear QPAT problem.

    \section{Main Results}\label{section-3}
    \subsection{Preliminaries}

    Throughout the paper, we assume that the decompositions~\eqref{DAdecomposition} hold, where the functions $p_i, q_j \in C(\TT^d)$ are continuous and known, and both $\{p_i\}_{i=1}^I$ and $\{q_j\}_{j=1}^J$ are linearly independent systems bounded below by a positive constant. 
    
    In our analysis, we frequently use products of Banach spaces. For example, given $K$ real-valued Banach spaces $X_1,\ldots, X_K$, we define the direct sum as
    \begin{align*}
        \bigoplus_{k=1}^K X_k  := \{u := (u_1,\ldots, u_K) : u_k\in X_k\;\text{for all}\;1\leq k\leq K\}
    \end{align*}
    with norm $\Vert u \Vert_{\oplus X_k} := \sqrt{\sum_{k=1}^K \Vert u_k\Vert_{X_k}^2}$. In the case of Sobolev spaces, we identify $W^{s,p}(\Omega; \RR^K) \cong W^{s,p}(\Omega)^K$, i.e., $K$ copies of $L^p$-type Sobolev spaces with regularity $s$, and define $W^{s,p}_0(\Omega)$ as the closure of $C_c^\infty(\Omega)$ in the $W^{s,p}(\Omega)$ norm.
    \par
    We say that a function or a family of functions is uniformly positive on $\Omega$ if there exists a fixed positive constant that lower bounds the functions on $\Omega$.
    \par
    Throughout the paper, we use $C>0$ to denote generic constants that depend only on the fixed background data of the problem (such as the Sobolev index $s$, the domain $\Omega$, and the weights $p_i$ and $q_j$) and whose values may change from line to line.
    \par
    We adopt the following notation for parametric PDEs and functions. Given $u \in C(\TT^d; H^1(\Omega))$, where $C(\TT^d; H^1(\Omega))$ denotes the space of continuous $H^1(\Omega)$-valued functions with domain $\TT^d$ (here $H^k(\Omega) = W^{k,2}(\Omega)$), and $t \in \TT^d$, we denote $u_t := u(\cdot, t) \in H^1(\Omega)$. The frequency-dependent QPAT model in \eqref{frequency-QPAT} can then be written as \begin{equation}\label{eq:Lt_2}
        \begin{cases}
            L_t u_t &= -\nabla\cdot (D_t \nabla u_t) + A_t u_t = 0,\\
            u_t\vert_{\partial\Omega} &= f_t.
        \end{cases}
    \end{equation}
    for each fixed $t\in \TT^d$, where $L_t$ is the operator $L$ evaluated at $t \in \TT^d$.

    We pose the inverse coefficient problem within the following admissible sets: given a precompact domain $\Omega'\subset\!\subset \Omega$ and constants $c_1, c_2 > 0$, we define the admissible set as
    \begin{equation}\label{admissible-set-1}
        \mathcal A := \{\varphi\in H^{s+2}(\Omega) : \varphi > c_1 \;\text{on}\;\Omega, \varphi \equiv 1\;\text{on}\; \Omega - \Omega', \;\text{and}\; \Vert \varphi\Vert_{H^{s+2}(\Omega)} < c_2\},
    \end{equation}
    for a suitable $s > n/2$.
    We also define a more strongly preconditioned subset of $\mathcal A$:
    \begin{equation}\label{admissible-set-2}
        \mathcal A_\infty := \{\varphi\in W^{s + 2, \infty}(\Omega) : \varphi > c_1 \;\text{on}\;\Omega, \varphi \equiv 1\;\text{on}\; \Omega - \Omega', \;\text{and}\; \Vert \varphi\Vert_{W^{s+2, \infty}(\Omega)} < c_2\}.
    \end{equation}
    For simplicity, we will take $s \in \NN$ when considering the set $\mathcal A_\infty$. Note that $\mathcal A_\infty \subseteq \mathcal A$. 
    
    We summarize all notations used in this paper in the following table:
    {
        \def\arraystretch {1.25}
        \begin{longtable}{ ||c||p{13cm}|| }
            \hline
            Symbols & Definitions\\
            \hline\hline
            \endhead
            ${\boldsymbol A}(x)$ & vector of attenuation, ${\boldsymbol A} = (A_1,\ldots, A_J)$\\
            \hline
            $\dot{{\boldsymbol A}}(x)$ & vector of perturbed attenuation, $\dot{{\boldsymbol A}} = (\dot A_1,\ldots, \dot A_J)$\\
            \hline
            $A(x, t), A_t(x)$ & attenuation coefficient, $A(x, t) = A_t(x) = \sum_{j=1}^J q_j(t) A_j(x)$\\
            \hline
            $\dot A(x, t), \dot A_t(x)$ & perturbed attenuation, $\dot A(x, t) = \dot A_t(x) = \sum_{j=1}^J q_j(t) \dot A_j(x)$\\
            \hline
            $\mathcal A$, $\mathcal A_\infty$ & preconditioned sets, refer to Equations~\eqref{admissible-set-1} and \eqref{admissible-set-2}\\
            \hline
            ${\boldsymbol D}(x)$ & vector of diffusivity, ${\boldsymbol D} = (D_1,\ldots, D_I)$\\
            \hline
            $\dot{{\boldsymbol D}}(x)$ & vector of perturbed diffusivity, $\dot{{\boldsymbol D}} = (\dot D_1,\ldots, \dot D_I)$\\
            \hline
            $D(x, t), D_t(x)$ & diffusion coefficient, $D(x, t) = D_t(x) = \sum_{i=1}^I p_i(t) D_i(x)$\\
            \hline
            $\dot D(x, t), \dot D_t(x)$ & perturbed diffusivity, $\dot D(x, t) = \dot D_t(x) = \sum_{i=1}^I p_i(t) \dot D_i(x)$\\
            \hline
            $G$, $G_t$ & solution operators for $L$ and $L_t$ under homogeneous boundary conditions\\
            \hline
            $L, L_t$ & PDE operator, refer to Equations~\eqref{frequency-QPAT} and \eqref{eq:Lt_2}\\
            \hline
            $\mathfrak{m}(x)$ & measurement given by \eqref{frequency-measurement}\\
            \hline
            $M$, $m$ & a single measurement operator and its linearization (dependent on $D$ and $A$), refer to Equations~\eqref{eq:measop} and \eqref{eq:measop-linear}
            \\
            \hline
            $\boldsymbol M$, $\boldsymbol m$ & system of multiple measurement operators and the system of their linearizations, given by $\boldsymbol M := (M_1, \ldots, M_K)$  and $\boldsymbol m := (m_1, \ldots, m_K)$\\
            \hline
            $\Omega$, $\Omega'$ & precompact domains in $\RR^n$, $\Omega'\subset\!\subset \Omega$\\
            \hline
            $p_i(t)$, $q_j(t)$ & uniformly positive weight functions in $C(\TT^d)$\\
            \hline
            $T$ & harmonic extension, $T\in B(H^{1/2}(\partial\Omega), H^1(\Omega))$\\
            \hline
            $u(x,t)$ & solution to Equation~\eqref{frequency-QPAT}\\
            \hline
            $v(x, t)$ & linearized solution to Equation~\eqref{linearized-frequency-QPAT}\\
            \hline
            $X$, $X'$, $X''$ & Banach spaces for the unknown variables, refer to \Cref{inverse-function-theorem}\\
            \hline
            $Y$, $Y'$, $Y''$ & Banach spaces for the observed variables, refer to \Cref{inverse-function-theorem}\\
            \hline
        \end{longtable}
    }
    \subsection{Organization of the main results}
    In this section, we outline our approach to the inverse problem. \Cref{section-4} establishes the forward regularity of the QPAT problem. In \Cref{section-4-1}, we show that given a boundary value $f \in C(\TT^d; H^{1/2}(\partial\Omega))$ and uniformly positive coefficients $({\boldsymbol D}, {\boldsymbol A}) \in L^\infty(\Omega; \RR^{I+J})$, the solution $u$ to PDE~\eqref{frequency-QPAT} is well posed in $C(\TT^d; H^1(\Omega))$. Hence, for a fixed boundary value $f$, we can define the \textit{measurement operator}
        \begin{equation} \label{eq:measop}
            M: (\boldsymbol D, \boldsymbol A)\mapsto \mathfrak{m} \in L^2(\Omega)
            \;\;\text{by}\;\;
            M({\boldsymbol D}, {\boldsymbol A}) = \mathfrak{m}(x) = \int_{\TT^d} A(x, t) u(x, t) dt.
        \end{equation}
    We furthermore show (see~\Cref{analyticity}) that $M$ is Fr\'echet differentiable in the above topology at all uniformly positive $({\boldsymbol D}, {\boldsymbol A}) \in L^\infty(\Omega; \RR^{I+J})$.
    \par
    In \Cref{section-4-2}, we study the linearized operator (or the Fr\'echet derivative) of $M$, which we denote by $m$ and which is given by 
    \begin{equation} \label{eq:measop-linear}
        m(\dot {\boldsymbol D}, \dot {\boldsymbol A}) = \int_{\TT^d} \dot A(x, t) u(x,t)dt + \int_{\TT^d} A(x, t) v(x, t)dt,
    \end{equation}
    where $v \in C(\TT^d; H^1_0(\Omega))$ satisfies the parametric PDE
    \begin{align*}
        L_t v_t = \nabla\cdot (\dot D_t(x) \nabla u_t(x)) - \dot A_t(x) u_t(x),
    \end{align*}
    where $\dot D(x, t) = \dot D_t(x):= \sum_{i=1}^I p_i(t)\dot D_i(x)$ and $\dot A(x, t) = \dot A_t(x):= \sum_{j=1}^J q_j(t)\dot A_j(x)$. We show (in~\Cref{linear-forward-stability}) that given $({\boldsymbol D}, {\boldsymbol A}) \in \mathcal A^I\times\mathcal A^J$ with $s > n/2$, the operator $m$ satisfies the mapping property:
    \begin{align*}
        m : L^2(\Omega; \RR^I) \oplus H^1(\Omega; \RR^J) \rightarrow H^1(\Omega).
    \end{align*}
    \par
    \Cref{section-5} addresses the linear inverse stability of the QPAT problem. In \Cref{section-5-1}, we revisit the frequency-independent model, which one can consider as a special case where $I = J = 1$, the weights $p$ and $q$ are constant in $\TT^d$, and the boundary value $f$ is $t$-independent, in which case the linearized operator effectively reduces to
    \begin{align*}
        m(\dot D, \dot A) = \dot A u + A v\;\;\text{where}\;\; v\in H^1_0(\Omega)\;\;\text{satisfies}\;\; Lv = \nabla\cdot(\dot D\nabla u) - \dot Au.
    \end{align*}
    Recall the result in \cite{Kuchment2012}: there are $2n$ linearized measurements determined by CGO solutions such that the system of maps
    \begin{align*}
        {\boldsymbol m} = (m_1,\ldots, m_{2n}) : L^2(\Omega') \oplus H^1_0(\Omega') \rightarrow H^1(\Omega; \RR^{2n}),\;\;\text{where}\;\;\Omega'\subset\!\subset\Omega,
    \end{align*}
    is upper semi-Fredholm (see~\Cref{semi-fredholm}). We use the properties of the CGO solutions to show that the system of linearized maps is in fact injective (see \Cref{injectivity}), hence establishing linear inverse stability for the frequency-independent QPAT problem (see \Cref{prop}).
    \par
    In \Cref{section-5-2}, we shift our attention back to the frequency-dependent QPAT problem and show that by localizing the boundary data to specific frequencies in $\TT^d$, the linearized inverse stability can be adapted to the frequency-dependent QPAT problem using $2n(I+J)$ measurements (see~\Cref{main-3}).
    \par
    In \Cref{section-6}, we employ the general nonlinear inversion framework developed in \cite{Stefanov2009} (also presented in~\Cref{inverse-function-theorem}), which deduces local nonlinear stability directly from the properties of the linearized operator. By combining the forward regularity from \Cref{section-4} with the inverse regularity from \Cref{section-5}, we establish the local nonlinear stability of the frequency-dependent QPAT problem using $2n(I+J)$ measurements $\boldsymbol M:= (M_1,\ldots, M_{2n(I+J)})$, which yields~\Cref{main-2}.
    \par
    In the next section, we explicitly state the main results of our work.
    
    \subsection{Main Results}
    
    \begin{theorem}\label{main-3}
        For the frequency-averaged QPAT problem \eqref{frequency-QPAT}, assume that $({\boldsymbol D}, {\boldsymbol A}) \in \mathcal A^I\times\mathcal A^J$ with $s > n/2$. There exist $2n(I+J)$ measurement operators $M_1,\ldots, M_{2n(I+J)}$, each of the form \eqref{frequency-measurement}, such that the corresponding system of $2n(I+J)$ linearized measurement operators, as given in \eqref{eq:measop-linear}, defines a system of bounded linear maps
        \begin{align*}
            {\boldsymbol m} = (m_1,\ldots, m_{2n(I+J)}) : L^2(\Omega'; \RR^I) \oplus H^1_0(\Omega'; \RR^J) \rightarrow H^1(\Omega; \RR^{2n(I+J)})
        \end{align*}
        that satisfies the bi-Lipschitz estimate:
        \begin{align*}
            C^{-1}\Vert {\boldsymbol m}(\dot {\boldsymbol D}, \dot {\boldsymbol A})\Vert_{H^1(\Omega; \RR^{2n(I+J)})} \leq \Vert\dot {\boldsymbol D}\Vert_{L^2(\Omega; \RR^I)} + \Vert\dot {\boldsymbol A}\Vert_{H^1(\Omega; \RR^J)} \leq C\Vert {\boldsymbol m}(\dot {\boldsymbol D}, \dot {\boldsymbol A})\Vert_{H^1(\Omega; \RR^{2n(I+J)})}.
        \end{align*}
    \end{theorem}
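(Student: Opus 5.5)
The plan is to reduce the frequency-averaged problem to $I+J$ decoupled copies of the frequency-resolved linearized problem, one for each frequency profile, and then to invoke \Cref{prop}.

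The upper bound $\|\boldsymbol m(\dot{\boldsymbol D},\dot{\boldsymbol A})\|_{H^1}\le C(\|\dot{\boldsymbol D}\|_{L^2}+\|\dot{\boldsymbol A}\|_{H^1})$ follows directly from the forward mapping property in \Cref{linear-forward-stability}, applied to each $m_k$. The substance is the lower bound.

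\textbf{Choosing frequencies and boundary data.} Since $\{p_i\}$ and $\{q_j\}$ are linearly independent continuous functions, I would pick $I+J$ points $t_1,\dots,t_{I+J}\in\TT^d$ such that the $(I+J)\times(I+J)$ matrix built from the rows $(p_1(t_l),\dots,p_I(t_l),q_1(t_l),\dots,q_J(t_l))$ is invertible. This needs joint independence of the combined family. If that fails, I would instead use separate point sets for $\boldsymbol D$ and $\boldsymbol A$, with $I$ and $J$ points respectively, which still gives a total of $I+J$.

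For each $t_l$ and each of the $2n$ CGO boundary data $f^{(r)}$ from \Cref{prop}, adapted to the frozen coefficients $(D_{t_l},A_{t_l})$, I would take a frequency-localized illumination
\[
f(x,t)=\rho_\epsilon(t-t_l)\,f^{(r)}(x),
\]
where $\rho_\epsilon\ge0$ is a continuous bump with unit mass concentrating at $t_l$. Then $u_t=\rho_\epsilon(t-t_l)u^{(r)}_t$, and $v_t$ scales the same way. As $\epsilon\to0$, the measurement $m$ converges in operator norm from $L^2\oplus H^1_0$ to $H^1$ toward the frequency-resolved linearization at $t_l$:
\[
\dot A_{t_l}u^{(r)}_{t_l}+A_{t_l}v_{t_l},\qquad \text{with } \dot D_{t_l}=\sum_i p_i(t_l)\dot D_i \text{ and } \dot A_{t_l}=\sum_j q_j(t_l)\dot A_j .
\]
Operator-norm convergence uses continuity of $t\mapsto(D_t,A_t)$ and of the solution operators $G_t$ in $t$, which comes from the forward analysis of \Cref{section-4}. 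Injectivity with a stability bound is stable under small operator-norm perturbations, so it suffices to prove the estimate for the limiting frozen system and then fix $\epsilon$ small.

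\textbf{Applying the frequency-resolved result.} For each $l$, \Cref{prop} applied to $(D_{t_l},A_{t_l})\in\mathcal A$-type coefficients gives
\[
\|\dot D_{t_l}\|_{L^2}+\|\dot A_{t_l}\|_{H^1}\le C\|\boldsymbol m^{(l)}\|_{H^1}.
\]
Here one must check that $D_{t_l}$ and $A_{t_l}$ satisfy the hypotheses of \Cref{prop}, namely regularity, positivity, and being constant outside $\Omega'$; if normalization to $1$ is needed, the constant can be rescaled.

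Inverting the spectral matrix then recovers each $\dot D_i$ and $\dot A_j$ as a fixed linear combination of the $\dot D_{t_l}$ and $\dot A_{t_l}$, which yields the lower bound. In the combined-matrix version there is an issue: $\dot D$ and $\dot A$ share rows, but the estimate controls the $p$-combinations and the $q$-combinations separately. Separate point sets, $I$ for $p$ and $J$ for $q$, therefore work cleanly. To keep the count at $I+J$ frequencies, I would simply use, at each of the $I+J$ frequencies, the information on both $\dot D_{t_l}$ and $\dot A_{t_l}$, and choose points so that $(p_i(t_l))$ has rank $I$ and $(q_j(t_l))$ has rank $J$. Linear independence guarantees such points exist, and the total measurement count is $2n(I+J)$.

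\textbf{Main obstacle.} The hardest step will be justifying the operator-norm convergence of the localized averaged measurement to the frozen one, uniformly over the unit ball. A related difficulty is that the CGO data in \Cref{prop} depend on the frozen coefficients, so the stability constant must persist under the perturbation. I would handle this with a Neumann-series argument for $G_t-G_{t_l}$, using the uniform continuity of $p_i$ and $q_j$.
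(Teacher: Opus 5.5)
Your proposal is correct and follows essentially the same route as the paper. The paper also uses frequencies $\tau_l$ at which $(p_i(\tau_l))$ has rank $I$ and $(q_j(\tau_l))$ has rank $J$, and illuminations $g_{k,l}(x)\eta_\eps(t-\tau_l)$ built from the CGO data of the frozen coefficients $(D_{\tau_l},A_{\tau_l})$. It then approximates the averaged linearization by the frozen one in operator norm; this is \Cref{approximate-identity}, which obtains the $t$-continuity you flag as the main obstacle from \Cref{solution-stability-2} and the identity \eqref{eq:Gdiff}. It finishes with \Cref{prop} at each $\tau_l$ followed by the left inverses $P^\dagger, Q^\dagger$.

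Your preliminary worry about a jointly invertible $(I+J)\times(I+J)$ matrix is unnecessary, as you eventually conclude yourself. The admissibility of $(D_{\tau_l},A_{\tau_l})$, whose constant values outside $\Omega'$ need not be $1$, is glossed over in the paper as well. It is harmless, because the argument for \Cref{prop} never uses that normalization.
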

    The proof is given at the end of \Cref{section-5-2}. Note that the above result is derived from the linear analysis on the classical QPAT problem, which is stated in the following proposition:
    \begin{proposition} \label{prop}
        Given the QPAT problem \eqref{QPAT}, assuming $(D, A) \in \mathcal A\times\mathcal A$ with $s > n/2$, there are $2n$ measurements determined by $2n$ CGO boundary values such that the system of linearized measurements $L^2(\Omega') \oplus H^1_0(\Omega') \rightarrow H^1(\Omega; \RR^{2n})$
        satisfies Lipschitz inverse stability.
    \end{proposition}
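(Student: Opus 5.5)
The plan is to combine the upper semi-Fredholm result of \cite{Kuchment2012} with an injectivity argument. A semi-Fredholm operator with trivial kernel is bounded below, and that is exactly the Lipschitz inverse estimate.

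\textbf{Step 1 (forward bound and semi-Fredholm property).} Fix $(D,A)\in\mathcal A\times\mathcal A$ with $s>n/2$. Write the linearized map as $m(\dot D,\dot A)=\dot A u+Av$ with $v=G(\nabla\cdot(\dot D\nabla u)-\dot A u)$. Boundedness $L^2(\Omega')\oplus H^1_0(\Omega')\to H^1(\Omega)$ follows from elliptic regularity and the Sobolev multiplier property of $H^{s+2}$ with $s>n/2$.

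For the boundary data I would use the Liouville transform $\tilde u=\sqrt D\,u$. This turns $L$ into the Schrödinger operator $-\Delta+q$ with $q=\Delta\sqrt D/\sqrt D+A/D$. I would then take CGO solutions $u=D^{-1/2}e^{\rho\cdot x}(1+\psi_\rho)$ with $\rho\cdot\rho=0$, $|\rho|$ large, and $\|\psi_\rho\|_{H^{s}}=O(|\rho|^{-1})$. Choosing $2n$ such $\rho$'s (for instance $\rho=\tau(e_k\pm i e_{k'})$ and their conjugates, taking real and imaginary parts) gives real boundary values $f_1,\dots,f_{2n}$.

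Following \cite{Kuchment2012}, I would recast $\boldsymbol m$, after composing with $L$ or $\Delta$ and localizing to $\Omega'$, as a principal part plus a compact remainder. The principal part is an elliptic (overdetermined, in the Douglis–Nirenberg sense) pseudodifferential system in $(\dot D,\dot A)$, and its principal symbol involves $\xi\cdot\nabla u_k$ and $u_k$. This yields the semi-Fredholm estimate
\[
\|\dot D\|_{L^2}+\|\dot A\|_{H^1}\le C\bigl(\|\boldsymbol m(\dot D,\dot A)\|_{H^1}+\|(\dot D,\dot A)\|_{*}\bigr),
\]
where $\|\cdot\|_*$ is a weaker, compact norm. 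I take this estimate as given by the earlier semi-Fredholm result.

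\textbf{Step 2 (injectivity — the main obstacle).} Suppose $\boldsymbol m(\dot D,\dot A)=0$ with $(\dot D,\dot A)$ supported in $\Omega'$. For each $k$ we then have $\dot A u_k=-Av_k$, and hence
\[
Lv_k=\nabla\cdot(\dot D\nabla u_k)+Av_k .
\]
Eliminating $\dot A$ via $\dot A=-Av_k/u_k$, which is legitimate since the CGO solutions are nonvanishing for large $|\rho|$, gives the transport-type identity
\[
-\nabla\cdot(D\nabla v_k)+A v_k - A v_k=\nabla\cdot(\dot D\nabla u_k),
\]
that is, $-\nabla\cdot(D\nabla v_k)=\nabla\cdot(\dot D\nabla u_k)$ with $v_k=-\dot A u_k/A$. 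Writing $w=\dot A/A\in H^1_0(\Omega')$, this becomes
\[
\nabla\cdot\bigl(D\nabla(wu_k)\bigr)=\nabla\cdot(\dot D\nabla u_k)\qquad\text{for } k=1,\dots,2n.
\]

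Now I substitute the CGO ansatz and divide by $e^{\rho\cdot x}$. The leading order in $|\rho|$ gives
\[
\rho\cdot\bigl(D\nabla w\bigr)\approx \rho\cdot\rho\,(\dots)+\text{lower order}.
\]
Since $\rho\cdot\rho=0$, the order-$|\rho|^2$ terms cancel. The order-$|\rho|$ terms give $\rho\cdot\nabla(Dw-\dot D)$-type relations plus $O(|\rho|^{-1})$ errors. Using $2n$ independent complex directions $\rho$, these yield $\nabla(\cdot)$ constraints forcing the compactly supported combination, and then $w$ and $\dot D$ separately, to vanish. The errors are absorbed by letting $|\rho|\to\infty$, because the identity holds exactly for every admissible $\tau$.

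The delicate point is controlling the $\psi_\rho$ terms uniformly so that they can be absorbed. The alternative is to argue that the kernel is finite-dimensional and independent of $\tau$ for large $\tau$, and then kill it by the asymptotics. I expect this step to carry the real work.

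\textbf{Step 3 (conclusion).} With the kernel trivial, the standard compactness–contradiction argument removes $\|\cdot\|_*$ from the estimate in Step 1. Take a normalized sequence with $\boldsymbol m\to0$; extract a subsequence convergent in the weak norm; Step 1 upgrades this to strong convergence; and the limit lies in the kernel, which is trivial — a contradiction. This gives $\|\dot D\|_{L^2}+\|\dot A\|_{H^1}\le C\|\boldsymbol m(\dot D,\dot A)\|_{H^1(\Omega;\RR^{2n})}$, which is the Lipschitz inverse stability claimed.
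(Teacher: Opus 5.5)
Your Steps 1 and 3 follow the paper: the semi-Fredholm estimate from \cite{Kuchment2012} together with a trivial kernel gives a bound from below. Your reduction of $\boldsymbol m(\dot D,\dot A)=0$ to $\nabla\cdot(D\nabla(wu_k))=\nabla\cdot(\dot D\nabla u_k)$ with $w=\dot A/A$ is exactly the paper's identity \eqref{eq:ukeqn}.

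\textbf{The gap is in Step 2.} The proposition concerns one fixed system of $2n$ measurements, i.e.\ one fixed large $|\rho|$. A kernel element of that system satisfies the identity only for those $2n$ functions $u_k$, not ``for every admissible $\tau$''. So you cannot send $|\rho|\to\infty$ and read off the $O(|\rho|)$ and $O(1)$ coefficients separately; you have a single identity in which they are mixed. Your formal expansion is not wrong in spirit: the $O(|\rho|)$ coefficient is in fact $\rho\cdot D^{1/2}\nabla(2w-\dot D/D)$, matching the relation the paper derives. But to use it at fixed $\rho$ you would need an estimate, uniform in $|\rho|$, showing that this term dominates the $O(1)$ remainder. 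You do not give one, and it is not routine: the remainder contains $\nabla\cdot(D\nabla w)$ and $\nabla\dot D$, i.e.\ at least as many derivatives as the term you control. Your fallback, that the kernel is independent of $\tau$ for large $\tau$, is unsupported.

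\textbf{The missing idea is an exact algebraic elimination at the fixed solutions}, which the paper takes from \cite{Bal2010}.
\begin{enumerate}
\item Write the identity as $\nabla u_k\cdot X=\Delta u_k\,(Dw-\dot D)+u_k\nabla\cdot(D\nabla w)$, with $X=\nabla\dot D-\nabla(Dw)-D\nabla w$.
\item Form $u_{2k}\cdot(\text{equation }2k-1)-u_{2k-1}\cdot(\text{equation }2k)$. The term $u_k\nabla\cdot(D\nabla w)$ cancels exactly.
\item Since $Lu_k=0$, the Laplacian terms become $D(u_{2k}\Delta u_{2k-1}-u_{2k-1}\Delta u_{2k})=-\nabla D\cdot\beta_k$, which is \eqref{CGO-vector-field}. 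This yields $\beta_k\cdot\bigl(X+\tfrac{\nabla D}{D}(Dw-\dot D)\bigr)=0$ for $k=1,\dots,n$.
\item Invertibility of the matrix field $\beta$ (Lemma~\ref{lem:beta}) gives $\nabla(\dot D/D)=2\nabla w$ exactly. Compact support in $\Omega'$ then gives $\dot D=2Dw$.
\item Substituting back yields $u_kLw=0$. Since $u_k\neq0$ a.e.\ and $w\in H^1_0$, $w=0$.
\end{enumerate}
Large $|\rho|$ is used here only to guarantee pointwise properties of one fixed set of solutions (invertibility of $\beta$, nonvanishing of $u_k$), never to separate orders in $|\rho|$.

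\textbf{A secondary point.} \cite[Theorem 4.1]{Kuchment2012} is a pseudodifferential result for smooth coefficients, so you cannot apply it directly at a general $(D,A)\in\mathcal A\times\mathcal A$. The paper proves the estimate at smooth base points with a uniform constant (Remark~\ref{perturbation-remark}). It then extends it to $H^{s+2}$ coefficients by approximation, using Lemma~\ref{linear-perturbation-analysis}.
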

    Note that this proposition complements the principal symbol analysis in \cite{Kuchment2012}, and the proof is provided in~\Cref{section-5-1}. Below we state our main result on the nonlinear inverse stability of the frequency QPAT problem:
    \begin{theorem} \label{main-2}
        Consider the frequency-dependent QPAT problem in \eqref{frequency-QPAT}. For any H\"older exponent $\alpha \in (1/2, 1)$, there exists a Sobolev index $s>n/2$ such that the following holds:
        \par
        For any $({\boldsymbol D}, {\boldsymbol A})\in \mathcal A_\infty^{I}\times \mathcal A_\infty^J$, there are $2n(I+J)$ boundary data $f_{1,1}, \ldots, f_{2n, I+J} \in C(\TT^d; H^{s+1/2}(\partial\Omega))$ and a relatively open neighborhood $U$ of $({\boldsymbol D}, {\boldsymbol A})$ in $\mathcal A_\infty^I \times A_\infty^J$ with respect to the $W^{s+2, \infty}(\Omega; \RR^{I+J})$ topology, on which the H\"older-type inverse stability estimate
        \begin{align*}
            \Vert({\boldsymbol D}_1, {\boldsymbol A}_1) - ({\boldsymbol D}_2, {\boldsymbol A}_2)\Vert_{L^\infty(\Omega; \RR^{I+J})} \leq C_\alpha \Vert {\boldsymbol M}({\boldsymbol D}_1, {\boldsymbol A}_1)  - {\boldsymbol M}({\boldsymbol D}_2, {\boldsymbol A}_2)\Vert_{{L}^2(\Omega; \RR^{2n(I+J)})}^\alpha,
        \end{align*}
        holds for all $({\boldsymbol D}_1, {\boldsymbol A}_1), ({\boldsymbol D}_2, {\boldsymbol A}_2)\in U$. Here, ${\boldsymbol M}:= (M_1, \ldots, M_{2n(I+J)})$ is the system of measurement operators defined in \eqref{frequency-measurement} for the $2n(I+J)$ boundary data $f_{k,l}(x, t)$.
    \end{theorem}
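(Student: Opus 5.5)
The plan is to apply the abstract linear-to-nonlinear result of \cite{Stefanov2009}, recalled in \Cref{inverse-function-theorem}. Take the unknowns in a scale $X''\hookrightarrow X\hookrightarrow X'$ with
\[
X'' = W^{s+2,\infty}(\Omega;\RR^{I+J}),\qquad X = L^2(\Omega;\RR^I)\oplus H^1(\Omega;\RR^J),\qquad X' = L^\infty(\Omega;\RR^{I+J}).
\]
For the data take $Y = H^1(\Omega;\RR^{2n(I+J)})$ and $Y' = L^2(\Omega;\RR^{2n(I+J)})$. The differences $({\boldsymbol D}_1,{\boldsymbol A}_1)-({\boldsymbol D}_2,{\boldsymbol A}_2)$ vanish on $\Omega-\Omega'$ and are bounded in $W^{s+2,\infty}$ by $2c_2$. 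So they lie in the spaces $L^2(\Omega')\oplus H^1_0(\Omega')$ on which \Cref{main-3} applies. The boundary data are the $2n(I+J)$ illuminations $f_{k,l}$ built in \Cref{section-5-2}: CGO traces for the frozen operators, localized in $t$ to chosen frequencies and lifted to $C(\TT^d;H^{s+1/2}(\partial\Omega))$. I would recheck that the construction in the proof of \Cref{main-3} yields data of this regularity once the background lies in $\mathcal A_\infty$.

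The key steps, in order, are as follows.

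\textbf{Step 1 (forward regularity).} Using \Cref{analyticity} and elliptic regularity with $s>n/2$, show that ${\boldsymbol M}$ is $C^1$, or at least Fr\'echet differentiable, from $U\subset X''$ into $Y'$. Its derivative is ${\boldsymbol m}$ from \eqref{eq:measop-linear}, and we need the remainder bound
\[
\Vert {\boldsymbol M}(x_1)-{\boldsymbol M}(x_2)-{\boldsymbol m}_{x_2}(x_1-x_2)\Vert_{Y'}\le C\Vert x_1-x_2\Vert_{X'}^{2}.
\]
The quadratic bound in the weak $L^\infty$ norm comes from writing the second-order term as $G_t$ applied to divergence and product terms of the perturbations. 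Since the target norm is only $L^2$, the $L^\infty$ norm of the coefficient perturbation suffices. I also need continuity of $x\mapsto{\boldsymbol m}_x$ so that the stability constant is uniform on $U$.

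\textbf{Step 2 (linear stability).} By \Cref{main-3}, $\Vert\dot x\Vert_X\le C\Vert{\boldsymbol m}(\dot x)\Vert_Y$. The measurements were designed at the reference point, so I would extend this to nearby base points by perturbation: ${\boldsymbol m}_x-{\boldsymbol m}_{x_0}$ has small operator norm from $X$ to $Y$ when $x$ is close to $x_0$ in $W^{s+2,\infty}$. This uses the forward estimates of \Cref{linear-forward-stability}, and it determines the neighborhood $U$.

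\textbf{Step 3 (interpolation and conclusion).} Since $X$ and $Y$ differ from $X'$ and $Y'$, I will close the argument by interpolation. The difference $\delta=x_1-x_2$ is bounded in $W^{s+2,\infty}$. Gagliardo--Nirenberg interpolation between $L^2$ and $W^{s+2,\infty}$ then gives $\Vert\delta\Vert_{L^\infty}\le C\Vert\delta\Vert_{L^2}^{\theta}$, with $\theta$ close to $1$ as $s$ grows. On the data side, ${\boldsymbol m}(\delta)$ is bounded in $H^{s'}$ by forward regularity. Interpolating between $L^2$ and $H^{s'}$ gives $\Vert{\boldsymbol m}(\delta)\Vert_{H^1}\le C\Vert{\boldsymbol m}(\delta)\Vert_{L^2}^{\theta'}$, again with $\theta'$ close to $1$ for large $s$. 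Chaining these,
\[
\Vert\delta\Vert_{X'}\le C\Vert{\boldsymbol m}(\delta)\Vert_{Y'}^{\gamma}\le C\bigl(\Vert{\boldsymbol M}(x_1)-{\boldsymbol M}(x_2)\Vert_{Y'}+C\Vert\delta\Vert_{X'}^2\bigr)^{\gamma},
\]
with $\gamma=\theta\theta'$. If $2\gamma>1$, the quadratic term is absorbed for $\delta$ small; shrinking $U$ ensures this. The result is $\Vert\delta\Vert_{L^\infty}\le C_\alpha\Vert{\boldsymbol M}(x_1)-{\boldsymbol M}(x_2)\Vert_{L^2}^{\alpha}$ with $\alpha=\gamma$. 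Given $\alpha\in(1/2,1)$, we choose $s$ large enough that $\gamma\ge\alpha$. The restriction $\alpha>1/2$ is exactly the absorption condition $2\alpha>1$.

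The main obstacle is Step 3's data-side interpolation together with the uniformity in Step 2. Interpolation requires an $H^{s'}$ bound for ${\boldsymbol m}(\delta)$, and therefore smoothness of the CGO-based boundary data and of $u$, up to $\partial\Omega$. The first thing I would try is to exploit that $\delta$ is supported in $\Omega'$ and apply interior-plus-boundary elliptic regularity with data in $H^{s+1/2}(\partial\Omega)$. I would verify that the regularity loss is independent of $s$, so that $\theta'\to1$.
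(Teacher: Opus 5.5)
Your proposal is correct and is essentially the paper's proof: the Stefanov--Uhlmann argument of \Cref{inverse-function-theorem} with $L^\infty$ interpolated between $L^2$ and $W^{s+2,\infty}$ and $H^1$ interpolated between $L^2$ and $H^{s+1}$ (the $H^{s+1}$ bound you flag as the main obstacle is exactly \Cref{linear-forward-estimate}), linear stability from \Cref{main-3}, the quadratic $L^\infty\to L^2$ remainder from \Cref{analyticity}, and absorption using $2\alpha>1$. Only your labeling of the scale is off, since $L^2\oplus H^1$ does not embed in $L^\infty$ (which is why the paper takes $X=L^\infty\cap X'$), and your perturbation argument for uniformity in the base point is precisely \Cref{linear-perturbation-analysis}, which makes explicit what the paper attributes to \Cref{perturbation-remark}.
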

    The preconditioned set $\mathcal A_\infty$ depends on the Sobolev index $s$, and an asymptotic lower bound of $n/2 + O(1/n)$ is in general needed for $s$; see~\Cref{functional-setup} for further explanation.

    \section{Forward Well-Posedness and Stability}\label{section-4}
    In this section, we establish several forward regularity results for the frequency-averaged QPAT problem~\eqref{frequency-QPAT} with measurement operator~\eqref{frequency-measurement}. For simplicity, we study the case of a single measurement; the case of multiple measurements follows from the same reasoning. We consider the solution $u$ of \eqref{frequency-QPAT} in the space of $C(\TT^d; H^1(\Omega))$ given boundary data in the space $C(\TT^d; H^{1/2}(\partial\Omega))$, where a solution is defined to be the weak solution of \eqref{frequency-QPAT} pointwise in $t$ for every $t\in \TT^d$, while the measurement is considered as a Bochner integral of $Au$ on $\TT^d$. 

    This section establishes several regularity results for the forward solutions of both the frequency-dependent QPAT model \eqref{frequency-QPAT} and the frequency-independent QPAT problem \eqref{QPAT}. 

    Throughout the paper, we use $B(X, Y)$ to denote the space of bounded linear operators from given Banach spaces $X$ to $Y$, and the norm is the operator norm.

    \subsection{Differentiability of the Forward Map and Linearization}
    \label{section-4-1}

    For each $t\in\TT^d$, the existence and uniqueness of $u_t \in H^1(\Omega)$ follow from standard elliptic theory \cite{Gilbarg2001}. It remains to show the continuity of the dependence on $t$. 

    We first derive an explicit representation of the solution $u(x, t)$. Let $T:H^{1/2}(\partial\Omega) \to H^1(\Omega)$ be the harmonic extension operator; that is, $\varphi := Tf$ solves the PDE $-\Delta\varphi = 0$ with boundary value $\varphi|_{\partial\Omega} = f$. As $u_t(x) = u(x, t)$ solves \eqref{frequency-QPAT} with $u_t|_{\partial\Omega}=f_t$, we see that $u_t(x) - Tf_t(x)\in H^1_0(\Omega)$ solves the following PDE
    \begin{align*}
        L_t (u_t(x) - Tf_t(x)) &= -\nabla\cdot (D_t(x)\nabla (u_t(x) - Tf_t(x))) + A_t(x)(u_t(x) - Tf_t(x))\\
        &= - L_t\circ Tf_t(x) \in H^{-1}(\Omega).
    \end{align*}
    Now, denote by $G_t: H^{-1}(\Omega)\to H^1_0(\Omega)$ the solution operator of $L_t$ equipped with the homogeneous Dirichlet boundary condition. Applying $G_t$ yields the following explicit representation of $u_t$:
    \begin{equation} \label{eq:utrep}
    u_t = Tf_t - G_t\circ L_t\circ Tf_t.
    \end{equation}
    We use this representation to establish the forward well-posedness with $u\in C(\TT^d; H^1(\Omega))$ in the next lemma. Note that $D(x,t),A(x,t)\in C(\TT^d;L^\infty(\Omega))$, which follows directly from the decomposition~\eqref{DAdecomposition} since $p_i,q_j\in C(\TT^d)$ and $D_i,A_j\in L^\infty(\Omega)$.
    \begin{lemma}
        Let $({\boldsymbol D}, {\boldsymbol A})\in L^\infty(\Omega; \RR^{I+J})$ with $k_1 < D_i, A_j < k_2$ for all $i, j$ for some positive constants $k_1, k_2$, and let $f\in C(\TT^d; H^{1/2}(\partial\Omega))$ be a fixed boundary value. There exists a unique solution $u\in C(\TT^d; H^1(\Omega))$ to \eqref{frequency-QPAT} that satisfies the estimate
        \begin{align*}
            \Vert u \Vert_{C(\TT^d; H^1(\Omega))} = \sup\nolimits_{t\in \TT^d}\Vert u(\cdot, t)\Vert_{H^1(\Omega)} \leq C\sup\nolimits_{t\in \TT^d}\Vert f_t\Vert_{H^{1/2}(\partial\Omega)}
        \end{align*}
        for some constant $C>0$ depending only on $k_1$ and $k_2$.
    \end{lemma}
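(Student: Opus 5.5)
The plan is to use the representation \eqref{eq:utrep}, $u_t = Tf_t - G_t L_t T f_t$, and prove three things: pointwise existence with a $t$-uniform bound, uniqueness, and continuity of $t\mapsto u_t$ in $H^1(\Omega)$.

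\textbf{Pointwise existence and the uniform bound.} Since the $p_i,q_j$ are continuous on the compact torus and bounded below by a positive constant, we have $p_i,q_j\in[\lambda,\Lambda]$ with $\lambda>0$. Hence
\[
I\lambda k_1 \le D_t \le I\Lambda k_2, \qquad J\lambda k_1 \le A_t \le J\Lambda k_2
\]
uniformly in $(x,t)$. The bilinear form $a_t(w,\phi)=\int_\Omega D_t\nabla w\cdot\nabla\phi + A_t w\phi$ is therefore bounded and coercive on $H^1_0(\Omega)$, with constants independent of $t$. By Lax--Milgram, $G_t$ exists and $\|G_t\|_{B(H^{-1},H^1_0)}\le C$ uniformly in $t$. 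We also have $\|L_t\|_{B(H^1,H^{-1})}\le C$ and $T\in B(H^{1/2}(\partial\Omega),H^1(\Omega))$. The representation then gives $\|u_t\|_{H^1}\le C\|f_t\|_{H^{1/2}}$ for each $t$, and taking the supremum yields the stated estimate. Uniqueness for each fixed $t$ follows from coercivity, since the difference of two solutions lies in $H^1_0$ and solves the homogeneous problem. (The constant also depends on $\lambda,\Lambda,I,J$ and $\Omega$, which count as fixed background data.)

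\textbf{Continuity in $t$.} This is the only real step. Write
\[
u_t-u_s = T(f_t-f_s) - G_tL_tT(f_t-f_s) - (G_tL_t - G_sL_s)Tf_s .
\]
The first two terms are bounded by $C\|f_t-f_s\|_{H^{1/2}}$, which tends to $0$ as $s\to t$ because $f\in C(\TT^d;H^{1/2}(\partial\Omega))$.

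For the last term, first note
\[
\|L_t-L_s\|_{B(H^1,H^{-1})}\le \|D_t-D_s\|_{L^\infty}+\|A_t-A_s\|_{L^\infty}\le \sum_i|p_i(t)-p_i(s)|\,\|D_i\|_{L^\infty}+\sum_j|q_j(t)-q_j(s)|\,\|A_j\|_{L^\infty},
\]
which tends to $0$. Next, use the resolvent identity $G_t - G_s = G_t(L_s-L_t)G_s$ on $H^{-1}$, which holds because $L_t$ and $L_s$ are isomorphisms $H^1_0\to H^{-1}$. This gives $\|G_t-G_s\|\le C^2\|L_t-L_s\|$. Splitting $G_tL_t-G_sL_s=(G_t-G_s)L_t+G_s(L_t-L_s)$ then yields operator-norm continuity. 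Together with $\sup_s\|Tf_s\|_{H^1}<\infty$ (compactness of $\TT^d$ and continuity of $f$), we get $u_t-u_s\to0$ in $H^1$, so $u\in C(\TT^d;H^1(\Omega))$.

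The main (mild) obstacle is ensuring that all constants are uniform in $t$. This is where the positive lower and upper bounds on $p_i,q_j$ and on $D_i,A_j$ enter, and it is what makes the resolvent-identity estimate hold uniformly.
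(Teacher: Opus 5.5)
Your proposal is correct and follows essentially the same route as the paper: the representation $u_t = Tf_t - G_tL_tTf_t$, $t$-uniform bounds on $L_t$ and $G_t$ (the paper cites its appendix corollary, whose proof is your coercivity argument), the $L^\infty$ estimate on $L_t - L_s$, and the resolvent identity $G_t - G_s = G_t(L_s-L_t)G_s$, with your grouping of the difference terms only a cosmetic rearrangement of the paper's. Your remark that the constant also depends on the bounds of $p_i,q_j$ (and on $I$, $J$, $\Omega$) is accurate, and slightly more precise than the lemma's stated dependence on $k_1,k_2$ alone.
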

    \begin{proof}
        The norm estimate follows from \Cref{uniform-estimate} in the appendix. Here, we only need to show the continuous dependence of the solution $u_t$ on $t$. For $t, s\in \TT^d$, the representation~\eqref{eq:utrep} gives
        \begin{align*}
            u_t - u_s &= Tf_t - Tf_s + G_s\circ L_s\circ Tf_s - G_t\circ L_t \circ Tf_t\\
            &= T(f_t - f_s) + (G_s - G_t)L_sTf_s + G_t(L_s - L_t)Tf_s + G_tL_tT(f_s - f_t).
        \end{align*}
        Taking the $H^1(\Omega)$-norm shows
        \begin{align*}
            \Vert u_t - u_s\Vert_{H^1(\Omega)} &\leq \Vert T\Vert_{B(H^{1/2}(\partial \Omega), H^1(\Omega))}\Vert f_t - f_s\Vert_{H^{1/2}(\partial\Omega)} + \Vert G_s - G_t\Vert_{B(H^{-1}(\Omega), H^1(\Omega))} \Vert L_sTf_s\Vert_{H^{-1}(\Omega)}\\
            &\qquad + \Vert G_t\Vert_{B(H^{-1}(\Omega), H^1(\Omega))}\Vert L_s - L_t\Vert_{B(H^1(\Omega), H^{-1}(\Omega))}\Vert Tf_s\Vert_{H^1(\Omega)}\\
            &\qquad + \Vert G_tL_tT\Vert_{B(H^{1/2}(\partial\Omega), H^1(\Omega))} \Vert f_s - f_t\Vert_{H^{1/2}(\partial\Omega)}.
        \end{align*} 
        By \Cref{uniform-estimate}, the norms of $L_t,G_t$ are uniformly bounded in $t$ with a constant depending only on $k_1$ and $k_2$. For the difference terms, we have $\Vert f_t - f_s\Vert_{H^{1/2}(\partial\Omega)} \rightarrow 0$ as $s\rightarrow t$ using continuity in $\TT^d$.
        To estimate $L_t-L_s$, take arbitrary $\varphi_1\in H^1_0(\Omega)$, $\varphi_2\in H^1(\Omega)$ to get
        $\int_\Omega \varphi_1 (L_t-L_s) \varphi_2 dx = \int_\Omega (D_t-D_s) \nabla\varphi_1 \cdot \nabla \varphi_2 + (A_t-A_s) \varphi_1\varphi_2 \,dx$. Therefore, there exists a constant $C>0$ such that
        \begin{equation} \label{eq:Lnormdiff}
            \Vert L_t - L_s\Vert_{B(H^1(\Omega), H^{-1}(\Omega))} \leq C \left( \Vert D_t - D_s \Vert_{L^\infty(\Omega)} +  \Vert A_t - A_s \Vert_{L^\infty(\Omega)} \right),
        \end{equation}
        where the right-hand side tends to $0$ as $s\rightarrow t$, since $D(x,t), A(x,t)\in C(\TT^d;L^\infty(\Omega))$. 
        To estimate $G_t-G_s$, we apply the relations $L_s G_s = \operatorname{id}$ and $G_t L_t G_s = G_s$ on $H^{-1}(\Omega)$ (where $\operatorname{id}$ denotes the identity operator) to write
        \begin{equation} \label{eq:Gdiff}
            G_t - G_s = G_t L_s G_s - G_t L_t G_s = G_t (L_s-L_t) G_s.
        \end{equation}
        As the norms of $G_t,G_s$ are uniformly bounded in $t,s$ by \Cref{uniform-estimate}, we conclude
        \begin{align*}
            \Vert G_t - G_s\Vert_{B(H^{-1}(\Omega), H^{1}(\Omega))} \le C \Vert L_t - L_s\Vert_{B(H^1(\Omega), H^{-1}(\Omega))}.
        \end{align*}
        The right-hand side has been shown to converge to $0$ as $s\rightarrow t$. This completes the proof.
        
    \end{proof} 
    To justify linearization, we first need to show differentiability of the forward map, which we summarize in the following lemma. Recall that, given a fixed boundary value $f \in C(\TT^d; H^{1/2}(\partial\Omega))$, the \textit{measurement operator} applied to $(\boldsymbol D, \boldsymbol A)$ is given by
    \begin{align*}
        M({\boldsymbol D}, {\boldsymbol A}) =  \mathfrak{m}(x) = \int_{\TT^d} A(x, t) u(x, t) dt
    \end{align*}
    as in~\eqref{frequency-measurement} and \eqref{eq:measop}. In words, $M$ encodes the dependence of the frequency-QPAT data on the unknown parameters $({\boldsymbol D}, {\boldsymbol A})$. 
    
    \begin{lemma}\label{analyticity}
        Let $({\boldsymbol D}, {\boldsymbol A})\in L^\infty(\Omega; \RR^{I+J})$ with $k_1 < D_i, A_j < k_2$ for all $i, j$ for some positive constants $k_1, k_2$. Given a fixed boundary value $f \in C(\TT^d; H^{1/2}(\partial\Omega))$, the measurement operator
        admits the following second-order expansion for all $({\boldsymbol D}^\ast, {\boldsymbol A}^\ast)$ in an $L^\infty(\Omega; \RR^{I+J})$ neighborhood of $({\boldsymbol D}, {\boldsymbol A})$:
        \begin{align*}
            M({\boldsymbol D}^\ast, {\boldsymbol A}^\ast) = M({\boldsymbol D}, {\boldsymbol A}) + m({\boldsymbol D}^\ast - {\boldsymbol D}, {\boldsymbol A}^\ast - {\boldsymbol A}) + R({\boldsymbol D}^\ast - {\boldsymbol D}, {\boldsymbol A}^\ast - {\boldsymbol A}),
        \end{align*}
        where $m : L^\infty(\Omega; \RR^{I+J}) \rightarrow L^2(\Omega)$ is a bounded linear operator, and $R : L^\infty(\Omega; \RR^{I+J}) \rightarrow L^2(\Omega)$ satisfies the following quadratic bound in this neighborhood: 
        \begin{align*}
            \Vert R({\boldsymbol D}^\ast - {\boldsymbol D}, {\boldsymbol A}^\ast - {\boldsymbol A})\Vert_{L^2(\Omega)} \leq C(\Vert {\boldsymbol D}^\ast - {\boldsymbol D}\Vert_{L^\infty(\Omega; \RR^I)} + \Vert {\boldsymbol A}^\ast - {\boldsymbol A}\Vert_{L^\infty(\Omega; \RR^J)})^2,
        \end{align*}
        for some constant $C>0$. Furthermore, this constant $C$ depends only on the parameters $k_1, k_2$ and the boundary value $f$. 
    \end{lemma}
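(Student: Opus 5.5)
The plan is to expand the solution $u^\ast$ for the perturbed coefficients about $u$ using the resolvent identity \eqref{eq:Gdiff}, frequency by frequency. Throughout, the bounds must be uniform in $t$, and then we integrate over $\TT^d$.

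Write $\delta D_t := D^\ast_t - D_t = \sum_i p_i(t)(D_i^\ast - D_i)$ and $\delta A_t := \sum_j q_j(t)(A_j^\ast - A_j)$. Set $\varepsilon := \Vert {\boldsymbol D}^\ast - {\boldsymbol D}\Vert_{L^\infty} + \Vert {\boldsymbol A}^\ast - {\boldsymbol A}\Vert_{L^\infty}$. Since the $p_i, q_j$ are continuous on the compact torus, $\sup_t(\Vert \delta D_t\Vert_{L^\infty} + \Vert\delta A_t\Vert_{L^\infty}) \le C\varepsilon$. We choose the neighborhood so that $\varepsilon < k_1/2$ and the starred coefficients satisfy $k_1/2 < D^\ast_i, A^\ast_j < 2k_2$. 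Then \Cref{uniform-estimate} gives bounds on $G_t, G_t^\ast, L_t, L_t^\ast$ that are uniform in $t$, with constants depending only on $k_1, k_2$.

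Let $\delta L_t := L^\ast_t - L_t$, which is the operator $\varphi \mapsto -\nabla\cdot(\delta D_t\nabla\varphi) + \delta A_t\varphi$. By the estimate \eqref{eq:Lnormdiff}, $\Vert\delta L_t\Vert_{B(H^1,H^{-1})} \le C\varepsilon$. The difference $w_t := u^\ast_t - u_t$ lies in $H^1_0(\Omega)$ and satisfies $L^\ast_t w_t = -\delta L_t u_t$, so $w_t = -G^\ast_t\,\delta L_t\, u_t$. Define $v_t := -G_t\,\delta L_t\, u_t$; this is exactly the linearized solution, since $L_t v_t = \nabla\cdot(\delta D_t\nabla u_t) - \delta A_t u_t$. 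By \eqref{eq:Gdiff}, $G^\ast_t - G_t = -G^\ast_t\,\delta L_t\, G_t$, and therefore
\begin{align*}
w_t - v_t = G^\ast_t\,\delta L_t\,G_t\,\delta L_t\, u_t,
\end{align*}
which gives $\Vert w_t - v_t\Vert_{H^1} \le C\varepsilon^2 \sup_t\Vert f_t\Vert_{H^{1/2}}$. We also have $\Vert w_t\Vert_{H^1}\le C\varepsilon\sup_t\Vert f_t\Vert$. Continuity of $t\mapsto v_t$ and $t\mapsto w_t$ in $H^1$ follows as in the preceding lemma, because every factor is continuous in operator norm. Hence the Bochner integrals make sense.

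Next, expand the measurement:
\begin{align*}
M({\boldsymbol D}^\ast,{\boldsymbol A}^\ast) - M({\boldsymbol D},{\boldsymbol A}) = \int_{\TT^d} \big(\delta A_t u_t + A_t v_t\big)\,dt + \int_{\TT^d}\big(A_t(w_t - v_t) + \delta A_t\, w_t\big)\,dt.
\end{align*}
The first integral is $m({\boldsymbol D}^\ast-{\boldsymbol D},{\boldsymbol A}^\ast-{\boldsymbol A})$ as in \eqref{eq:measop-linear}. It is linear in the perturbation, and it is bounded $L^\infty\to L^2$ by the estimates above together with $\Vert A_t\Vert_{L^\infty}\le C k_2$. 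We take the second integral as $R$. Using $\Vert gh\Vert_{L^2}\le\Vert g\Vert_{L^\infty}\Vert h\Vert_{L^2}$ and the bounds on $w_t - v_t$ and $w_t$, we get $\Vert R\Vert_{L^2}\le C\varepsilon^2$, with $C$ depending only on $k_1, k_2$ and $f$ (through $\sup_t\Vert f_t\Vert_{H^{1/2}}$), and on the fixed $p_i, q_j$ and $\Omega$.

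The main point requiring care is uniformity in $t$ of the constants for the perturbed operators $G^\ast_t$. This is handled by restricting the neighborhood so that the starred coefficients stay in a fixed positivity window and then invoking \Cref{uniform-estimate}. This is also where the lower bound on the weights $p_i, q_j$ enters: it keeps $D^\ast_t$ and $A^\ast_t$ uniformly positive.
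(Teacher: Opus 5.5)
Your proof is correct and follows essentially the same route as the paper. You expand $u^\ast_t$ about $u_t$ via the resolvent identity, take the linear term $v_t=-G_t\,\delta L_t\,u_t$, and split $M^\ast-M$ into exactly the same $m$ and $R$; the paper's remainder $\int_{\TT^d} \left(-\delta A_t\, G_t\,\delta L_t\, u_t + A^\ast_t\Phi_t\right)dt$ coincides with yours because $\Phi_t = w_t - v_t$.

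The only difference is how the quadratic bound is obtained. You use the one-step identity $w_t - v_t = G^\ast_t\,\delta L_t\,G_t\,\delta L_t\,u_t$ together with a uniform bound on $G^\ast_t$, secured by keeping the starred coefficients in a fixed positivity window. The paper instead expands $G^\ast_t$ in a Neumann series; your version is slightly cleaner and equally valid.
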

    \begin{proof}
        Let $({\boldsymbol D}^\ast, {\boldsymbol A}^\ast) \in L^\infty(\Omega; \RR^{I+J})$ and let $L^\ast$ and $G^\ast$ be the PDE operators corresponding to these starred coefficients. Set $\delta L_t := L_t^\ast - L_t$. Similar to~\eqref{eq:Gdiff}, we have $G_t - G^*_t = G_t \delta L_t G^*_t$, that is,
        \begin{align*}
            (\operatorname{id} + G_t \circ \delta L_t) G^*_t = G_t \qquad \text{ on } H^{-1}(\Omega).
        \end{align*}
        Similar to the derivation of~\eqref{eq:Lnormdiff}, we have
        \begin{equation}\label{eq:L_t}
            \begin{aligned}
                \Vert\delta L_t\Vert_{B(H^1(\Omega), H^{-1}(\Omega))} &\leq C\sum_{i=1}^I p_i(t) \Vert D_i - D_i^\ast \Vert_{L^\infty(\Omega)} + C\sum_{j=1}^J q_j(t) \Vert A_j - A_j^\ast \Vert_{L^\infty(\Omega)}\\
                &\leq C\Vert({\boldsymbol D}^\ast - {\boldsymbol D}, {\boldsymbol A}^\ast - {\boldsymbol A})\Vert_{L^\infty(\Omega; \RR^{I+J})}
            \end{aligned}
        \end{equation}
        where $C>0$ depends only on the constants $k_1, k_2$ and the boundary value $f$.
        This means $\Vert\delta L_t\Vert_{B(H^1(\Omega), H^{-1}(\Omega))} \ll 1$ for $({\boldsymbol D}^\ast, {\boldsymbol A}^\ast)$ sufficiently close to $({\boldsymbol D}, {\boldsymbol A})$ in $L^\infty(\Omega; \RR^{I+J})$. If so, $G_t\circ\delta L_t$ is a contraction, and the Neumann series expansion yields
        \begin{equation}\label{eq:G_t}
            G_t^\ast = \sum_{k\in \NN} (-G_t \circ \delta L_t)^k \circ G_t.
        \end{equation}
        Using this series expansion for $G^*_t$, along with the representation~\eqref{eq:utrep} for the solution $u^\ast$ corresponding to $({\boldsymbol D}^\ast, {\boldsymbol A}^\ast)$, we have
        \begin{align*}
            u_t^\ast &\phantom{:}= Tf_t- G_t^\ast (L_t^\ast Tf_t) \\
            &\phantom{:}= Tf_t - G_t(L_tTf_t) - G_t\circ \delta L_t \left(Tf_t - G_t (L_tTf_t)\right)\\
            &\phantom{:}\qquad - \sum_{k\geq 2} (-G_t \circ \delta L_t)^k \circ G_t (L_t^\ast Tf_t) + (G_t\circ \delta L_t)^2 Tf_t\\
            &\phantom{:}= u_t - G_t \circ \delta L_t u_t + \Phi_t,\\
            \text{where}\;\;\Phi_t &:= - \sum_{k\geq 2} (-G_t \circ \delta L_t)^k \circ G_t (L_t^\ast Tf_t) + (G_t\circ \delta L_t)^2 Tf_t.
        \end{align*}
        Denoting $\delta {\boldsymbol D} = (\delta D_1, \ldots, \delta D_I):= {\boldsymbol D}^\ast - {\boldsymbol D}$, $\delta D := D^\ast (x, t) - D(x, t)$,
        $\delta {\boldsymbol A} = (\delta A_1, \ldots, \delta A_J):= {\boldsymbol A}^\ast - {\boldsymbol A}$, and $\delta A := A^\ast(x, t) - A(x, t)$. Then $\Phi_t$ satisfies the estimate
        \begin{align*}
            \Vert \Phi_t \Vert_{H^1(\Omega)} &\leq \left(\sum_{k\in \NN}\Vert (G_t\circ \delta L_t)^k\circ G_t\circ L^\ast_t\Vert_{B(H^1(\Omega), H^1(\Omega))} + 1\right)\Vert G_t\circ \delta L_t\Vert_{B(H^1(\Omega), H^1(\Omega))}^2\Vert Tf_t\Vert_{H^1(\Omega)}\\
            &\leq C\Vert \delta L_t\Vert_{B(H^1(\Omega), H^{-1}(\Omega))}^2 \leq C\Vert(\delta {\boldsymbol D}, \delta {\boldsymbol A})\Vert_{L^\infty(\Omega; \RR^{I+J})}^2
        \end{align*}
        where, by \Cref{uniform-estimate} and \eqref{eq:L_t}, the constant $C>0$ can be chosen to depend only on the constants $k_1,k_2$ and the boundary value $f$. 
        Then the measurement operator $M^\ast$ can be written as
        \begin{align*}
            M^\ast(x) & = \int_{\TT^d} A^*_t(x) u^*_t(x) \,dt \\ 
            & = \int_{\TT^d} A^\ast_t \left (u_t - G_t \circ \delta L_t u_t + \Phi_t \right) dt\\
            &= \int_{\TT^d} A_t u_t dt + \int_{\TT^d} \delta A_t u_t - A_t G_t\circ \delta L_t u_t dt + \int_{\TT^d} - \delta A_t G_t\circ \delta L_t u_t + A^\ast_t \Phi_t dt\\
            &= M(x) + m(\delta {\boldsymbol D}, \delta {\boldsymbol A}) + R(\delta {\boldsymbol D}, \delta {\boldsymbol A})
        \end{align*}
        where $m$ and $R$ are defined as:
        \begin{align*}
            m(\delta {\boldsymbol D}, \delta {\boldsymbol A}) &= \int_{\TT^d} \delta A_t u_t - A_t G_t\circ \delta L_t u_t dt\\
            &= \int_{\TT^d} \sum_{j=1}^J q_j(t) \left(\delta A_j(x) u(x, t) - A_j(x) (G_t\circ \delta L_t u_t)(x, t)\right) dt\\
            R(\delta {\boldsymbol D}, \delta {\boldsymbol A}) &= \int_{\TT^d} - \delta A_t G_t\circ \delta L_t u_t + A^\ast_t \Phi_t dt \in O\left(\Vert(\delta {\boldsymbol D}, \delta {\boldsymbol A})\Vert_{L^\infty(\Omega; \RR^{I+J})}^2\right),
        \end{align*}
        Thus, $M\in L^2(\Omega)$ has the desired second-order expansion.        
    \end{proof}
    \begin{remark}
        We note that, for the frequency-independent case \eqref{QPAT}, if $I = J = 1$, $p$ and $q$ are constant unit weights, and $f_t$ is constant in $t\in \TT^d$, then the above result reduces to an analogue of \cite[Lemma 2.1]{Kuchment2012}. For a fixed boundary datum $f \in H^{1/2}(\partial\Omega)$, the measurement operator $M:L^\infty(\Omega; \RR^2) \rightarrow L^2(\Omega)$ has a bounded second-order expansion at all uniformly positive base points $(D, A) \in L^\infty(\Omega; \RR^2)$.
    \end{remark}
    \par

    We now proceed to analyze the linearized map of $(D, A) \mapsto M$. By the above \Cref{analyticity}, it is clear that the Fr\'echet derivative is well-defined. Analogous to the linearization analysis in \cite{Kuchment2012}, we perturb the PDE parameter vector $({\boldsymbol D}, {\boldsymbol A})$ by $\dot {\boldsymbol D} := (\dot D_1, \ldots, \dot D_I) \in L^\infty(\Omega; \RR^I)$ and $\dot {\boldsymbol A} := (\dot A_1, \ldots, \dot A_J) \in L^\infty(\Omega; \RR^J)$, and let $v\in C(\TT^d; H^1_0(\Omega))$ be the solution to the following PDE
    \begin{align}\label{linearized-frequency-QPAT}
        \begin{aligned}
            L_tv_t &= - \nabla\cdot (D_t(x) \nabla v_t(x)) + A_t(x) v_t(x) = \nabla\cdot (\dot D_t(x) \nabla u_t(x)) - \dot A_t(x) u_t(x)\\
            &= \sum_{i=1}^I p_i(t) \nabla\cdot (\dot D_i(x) \nabla u_t(x)) - \sum_{j=1}^J q_j(t) \dot A_j(x) u_t(x),
        \end{aligned}
    \end{align}
    where $\dot D_t(x) :=  \sum_{i=1}^I p_i(t) \dot D_i(x)$ and $\dot A_t(x) := \sum_{j=1}^J q_j(t) \dot A_j(x)$. Note that $v_t = -G_t \circ \delta L_t u_t$ directly from the first-order term of the expansion from the previous lemma. In other words, to obtain the Fr\'echet derivative of $M$, we set $\delta {\boldsymbol D} = \eps \dot {\boldsymbol D}$ and $\delta {\boldsymbol A} = \eps \dot {\boldsymbol A}$; the Fr\'echet derivative is given by
    \begin{align*}
        \lim_{\eps\rightarrow 0}\frac{1}{\eps}\left(M({\boldsymbol D} + \eps\dot {\boldsymbol D}, {\boldsymbol A} + \eps\dot {\boldsymbol A}) - M({\boldsymbol D}, {\boldsymbol A})\right) = \lim_{\eps\rightarrow 0} \frac{1}{\eps}\left(m(\eps\dot {\boldsymbol D}, \eps\dot {\boldsymbol A}) + R(\eps\dot {\boldsymbol D}, \eps \dot {\boldsymbol A})\right) = m(\dot {\boldsymbol D}, \dot {\boldsymbol A})
    \end{align*}
    Expanding out the expression of $m$, as in the last lemma, we have $m : L^\infty(\Omega; \RR^{I+J}) \rightarrow L^2(\Omega)$ given by the integral (as in \eqref{eq:measop-linear}): 
    \begin{equation*}
        m (\dot {\boldsymbol D}, \dot {\boldsymbol A}) = \sum_{j=1}^J \dot A_j(x)\int_{\TT^d} u(x,t)q_j(t)dt + \sum_{j=1}^J A_j(x)\int_{\TT^d} v(x, t) q_j(t)dt.
    \end{equation*}
    \begin{remark}
        Again, observe that taking $I = J = 1$, $p\equiv q\equiv 1$, and a fixed $t\in \TT^d$ retrieves the following linearized map for the classical QPAT problem as in \cite{Kuchment2012} (up to a constant of the Lebesgue measure of $\TT^d$):
        \begin{equation} \label{linearized-measurement}
            m (\dot D, \dot A) = \dot A(x) u(x) + A(x)v(x).
        \end{equation}
        where $v\in H^1_0(\Omega)$ satisfies the PDE
        \begin{equation}\label{linearized-QPAT}
            -\nabla\cdot (D(x)\nabla v(x)) + A(x)v(x) = \nabla\cdot (\dot D(x) \nabla u(x)) - \dot A(x) u(x).
        \end{equation}
    \end{remark}
    \subsection{Forward Regularity of the Linearized Map}\label{section-4-2}
    Here, we establish Sobolev regularity and boundedness properties of the linearized forward map under admissible assumptions on the background coefficients and boundary data. 
    \begin{lemma}\label{linear-forward-stability}
        Let $s > n/2$ and $({\boldsymbol D}, {\boldsymbol A}) \in \mathcal A^I \times \mathcal A^J$ be an admissible pair of parameters. Then for a fixed boundary datum $f \in C(\TT^d; H^{s+1/2}(\partial \Omega))$, the linearized measurement operator $m$ defined in~\eqref{eq:measop-linear} extends to a bounded linear operator
        \begin{align*}
            m: L^2(\Omega;\RR^{I+J}) \cong L^2(\Omega; \RR^I) \oplus L^2(\Omega; \RR^J) \to L^2(\Omega).
        \end{align*}
        Furthermore, $m$ restricts to a bounded linear operator
        \begin{align*}
            m: L^2(\Omega; \RR^I) \oplus H^1(\Omega; \RR^J) \to H^1(\Omega).
        \end{align*}
    \end{lemma}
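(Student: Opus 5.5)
The plan is to first establish uniform-in-$t$ regularity of the background solution $u_t$, and then to estimate the two terms of \eqref{eq:measop-linear} separately in each target space.

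\textbf{Regularity of $u_t$.} Since $D_i,A_j\in\mathcal A\subset H^{s+2}(\Omega)$ with $s>n/2$, Sobolev embedding gives $D_t,A_t\in W^{1,\infty}(\Omega)$, indeed $C^{1,\gamma}(\overline\Omega)$. The bounds are uniform in $t$, because $p_i,q_j$ are continuous and positive on the compact torus. Since $f_t\in H^{s+1/2}(\partial\Omega)$, higher-order elliptic regularity yields $u_t\in H^{s+1}(\Omega)$. The needed fact is that $H^{s+1}(\Omega)$ is an algebra for $s>n/2$; bootstrapping through the expanded equation $-D_t\Delta u_t=\nabla D_t\cdot\nabla u_t - A_tu_t$ then gives
\[
\sup_t\Vert u_t\Vert_{H^{s+1}}\le C\sup_t\Vert f_t\Vert_{H^{s+1/2}(\partial\Omega)},
\]
with a constant controlled by $c_1,c_2$. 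Continuity in $t$ follows from the resolvent identity \eqref{eq:Gdiff} as in the previous lemma. In particular $u_t\in W^{1,\infty}(\Omega)$, since $s>n/2$ gives $H^{s}\hookrightarrow L^\infty$ for $\nabla u_t$.

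\textbf{$L^2$ bound.} For $(\dot{\boldsymbol D},\dot{\boldsymbol A})\in L^2$, the right-hand side of \eqref{linearized-frequency-QPAT} is $\nabla\cdot(\dot D_t\nabla u_t)-\dot A_tu_t$. Because $\nabla u_t,u_t\in L^\infty$, this lies in $H^{-1}(\Omega)$ with norm at most $C(\Vert\dot{\boldsymbol D}\Vert_{L^2}+\Vert\dot{\boldsymbol A}\Vert_{L^2})$. \Cref{uniform-estimate} then bounds $v_t$ in $H^1_0$ uniformly in $t$. Hence
\[
\Vert A_tv_t\Vert_{L^2}+\Vert\dot A_tu_t\Vert_{L^2}\le C\,\Vert(\dot{\boldsymbol D},\dot{\boldsymbol A})\Vert_{L^2}.
\]
Integrating over $\TT^d$, as a Bochner integral using continuity in $t$, gives the first statement. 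The operator extends from $L^\infty$ to $L^2$ by density, and the formula persists.

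\textbf{$H^1$ bound.} If additionally $\dot{\boldsymbol A}\in H^1$, then $\dot A_tu_t\in H^1$, since $u_t\in W^{1,\infty}$ and
\[
\nabla(\dot A_tu_t)=u_t\nabla\dot A_t+\dot A_t\nabla u_t,
\]
which is bounded by $\Vert\dot A_t\Vert_{H^1}\Vert u_t\Vert_{W^{1,\infty}}$. For the second term, $v_t\in H^1_0$ with the bound above, and $A_t\in W^{1,\infty}$, so $A_tv_t\in H^1$ with $\Vert A_tv_t\Vert_{H^1}\le C\Vert v_t\Vert_{H^1}$. Importantly, only $\dot{\boldsymbol D}\in L^2$ is needed here: the divergence-form source still lies in $H^{-1}$, and $v_t\in H^1$ suffices. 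Integration in $t$ then completes the proof.

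\textbf{Main obstacle.} The expected difficulty is the uniform $W^{1,\infty}$ (indeed $H^{s+1}$) control of $u_t$ up to the boundary, including measurability and continuity of the $H^1$-valued integrands. I would handle this with standard $H^{k}$ elliptic regularity for smooth $\partial\Omega$ applied to the lifted problem $u_t-\tilde Tf_t$, where $\tilde T$ is a bounded extension $H^{s+1/2}(\partial\Omega)\to H^{s+1}(\Omega)$. The resulting constants depend only on the uniform bounds on the coefficients.
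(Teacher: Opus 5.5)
Your proposal is correct and follows essentially the same route as the paper. Both arguments rest on three ingredients:
\begin{enumerate}
\item a uniform-in-$t$ $W^{1,\infty}$ bound on $u_t$;
\item the uniform $H^{-1}\to H^1$ bound on $G_t$, which controls $v_t$ in $H^1$ from a divergence-form source that needs only $\dot{\boldsymbol D}\in L^2$;
\item product estimates for $\dot A_t u_t$ and $A_t v_t$ in $L^2$ and $H^1$.
\end{enumerate}
The only differences are minor. You get the $W^{1,\infty}$ control of $u_t$ by $H^{s+1}$ bootstrapping and $H^{s}\hookrightarrow L^\infty$, whereas the paper uses the $W^{2,p}$ ($p>n$) estimate and Morrey embedding in \Cref{solution-stability-1}. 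You also make the density extension from $L^\infty$ perturbations explicit, which the paper leaves implicit.
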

    \begin{proof}
        Let $\dot {\boldsymbol D} \in L^2(\Omega; \RR^I)$ and $\dot {\boldsymbol A} \in L^2(\Omega; \RR^J)$. Using the operator $G_t$, we can write the solution $v\in C(\TT^d; H^1_0(\Omega))$ to \eqref{linearized-frequency-QPAT} as
        \begin{align*}
            v_t = G_t (\nabla \cdot (\dot D_t \nabla u_t) - \dot A_t u_t).
        \end{align*}
        By \Cref{uniform-estimate}, $G_t$ in $B(H^{-1}(\Omega), H^1(\Omega))$ is uniformly bounded in $t$. By \Cref{solution-stability-1}, $u_t$ can be bounded in $W^{1,\infty}(\Omega)$ uniformly in $t$ since $f$ is continuous in $\TT^d$.
        Therefore, there exists a constant $C>0$ such that
        \begin{equation}\label{v_t-estimate}
            \begin{aligned}
                \Vert v_t\Vert_{H^1(\Omega)} &\leq \Vert G_t\Vert_{B(H^{-1}(\Omega), H^1(\Omega))} \Vert \nabla \cdot (\dot D_t \nabla u_t) - \dot A_t u_t \Vert_{H^{-1}(\Omega)}\\
                &\leq \sup\nolimits_{t\in\TT^d}(\Vert G_t\Vert_{B(H^{-1}(\Omega), H^1(\Omega))}\Vert u_t \Vert_{W^{1,\infty}(\Omega)})(\Vert \dot {\boldsymbol D} \Vert_{L^2(\Omega; \RR^I)} + \Vert \dot {\boldsymbol A} \Vert_{L^2(\Omega; \RR^J)})\\
                &\leq C(\Vert \dot {\boldsymbol D} \Vert_{L^2(\Omega; \RR^I)} + \Vert \dot {\boldsymbol A} \Vert_{L^2(\Omega; \RR^J)})
            \end{aligned}
        \end{equation}
        and this estimate is uniform in $t\in\TT^d$. Now referring to \eqref{eq:measop-linear}, we obtain estimates on $m$
        \begin{align*}
            \Vert m(\dot {\boldsymbol D}, \dot {\boldsymbol A})\Vert_{L^2(\Omega)} &\leq \sum_{j=1}^J \Vert \dot A_j \Vert_{L^2(\Omega)}\left \Vert \int_{\TT^d} u_t q_j(t)dt \right\Vert_{L^\infty(\Omega)} + \Vert A_j \Vert_{L^\infty(\Omega)} \left\Vert \int_{\TT^d} v_t q_j(t) dt\right\Vert_{L^2(\Omega)}\\
            &\leq C\left( \sup\nolimits_{t\in\TT^d}\Vert u_t\Vert_{L^\infty(\Omega)} \sum_{j=1}^J \Vert \dot A_j \Vert_{L^2(\Omega)} + \sup\nolimits_{t\in\TT^d}\Vert v_t\Vert_{L^2(\Omega)}\Vert A_j \Vert_{H^{s+2}(\Omega)}\right)\\
            &\leq C(\Vert \dot {\boldsymbol D} \Vert_{L^2(\Omega; \RR^I)} + \Vert \dot {\boldsymbol A} \Vert_{L^2(\Omega; \RR^J)}),
        \end{align*}
        
        for some constant $C>0$. Here, the bound on $\Vert u_t\Vert_{L^\infty(\Omega)}$ is given by \Cref{solution-stability-1}, the bound on $\Vert v_t\Vert_{L^2(\Omega)}$ is given by \eqref{v_t-estimate}, and the bound $\Vert A_j\Vert_{L^\infty(\Omega)} \leq C\Vert A_j\Vert_{H^{s+2}(\Omega)}$ follows from Sobolev embedding and the upper bound $c_2$ in the definition of $\mathcal A$. This proves the first claim.
        
        To show the second claim, we simply replace the estimates on $u_t$ and $v_t$ with higher regularity when estimating $m$:
        \begin{align*}
            \Vert m(\dot {\boldsymbol D}, \dot {\boldsymbol A}) \Vert_{H^1(\Omega)} &\leq \sum_{j=1}^J \Vert \dot A_j \Vert_{H^1(\Omega)}\left \Vert \int_{\TT^d} u_t q_j(t)dt \right\Vert_{W^{1,\infty}(\Omega)} + \Vert A_j \Vert_{W^{1,\infty} (\Omega)} \left\Vert \int_{\TT^d} v_t q_j(t) dt\right\Vert_{H^1(\Omega)}\\
            &\leq C(\Vert \dot {\boldsymbol D} \Vert_{L^2(\Omega; \RR^I)} + \Vert \dot {\boldsymbol A} \Vert_{H^1(\Omega; \RR^J)}),
        \end{align*}
        where the reasoning is similar to that above. This concludes the second claim.
    \end{proof}
    We next show that the linearized measurement operator $m$ depends continuously on the choice of background coefficients $({\boldsymbol D}, {\boldsymbol A})$ in $\mathcal A^I\times \mathcal A^J$. This result is essential in establishing uniform linear inverse estimates for all base points $({\boldsymbol D}, {\boldsymbol A}) \in \mathcal A^I\times \mathcal A^J$ in later sections, for instance in the proof of \Cref{injectivity} for all base points in $\mathcal A\times \mathcal A$.
    
    \begin{lemma}\label{linear-perturbation-analysis}
        Let $s>n/2$ and $({\boldsymbol D}, {\boldsymbol A}), ({\boldsymbol D}^\ast, {\boldsymbol A}^\ast) \in \mathcal A^I \times \mathcal A^J$ be two admissible pairs of parameters. For a fixed boundary datum $f \in C(\TT^d; H^{s + 1/2}(\partial\Omega))$, we denote by $m$ and $m^\ast$ the linearized measurement operator linearized at $({\boldsymbol D}, {\boldsymbol A})$ and $({\boldsymbol D}^\ast, {\boldsymbol A}^\ast)$, respectively. Then their difference satisfies the estimate
        \begin{align*}
            \Vert m - m^\ast \Vert_{B(L^2(\Omega; \RR^I)\oplus H^1(\Omega; \RR^J), H^1(\Omega))}
            \leq C\Vert({\boldsymbol D}, {\boldsymbol A}) - ({\boldsymbol D}^\ast, {\boldsymbol A}^\ast)\Vert_{H^{s+2}(\Omega; \RR^{I + J})},
        \end{align*}
        for some constant $C>0$.
    \end{lemma}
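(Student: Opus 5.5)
The plan is to write $m - m^\ast$ as a sum of terms, each containing exactly one difference of background quantities, and to bound each difference using the representation $u_t = Tf_t - G_tL_tTf_t$ together with the resolvent identity \eqref{eq:Gdiff}. Write $m(\dot{\boldsymbol D},\dot{\boldsymbol A}) = \int_{\TT^d}\dot A_t u_t\,dt + \int_{\TT^d} A_t v_t\,dt$, and analogously for $m^\ast$ with $u^\ast_t, v^\ast_t, A^\ast_t$. Then
\begin{align*}
(m-m^\ast)(\dot{\boldsymbol D},\dot{\boldsymbol A}) = \int_{\TT^d}\dot A_t (u_t-u_t^\ast)\,dt + \int_{\TT^d}(A_t-A_t^\ast) v_t\,dt + \int_{\TT^d} A_t^\ast (v_t - v_t^\ast)\,dt .
\end{align*}
Set $\delta := \Vert({\boldsymbol D}, {\boldsymbol A}) - ({\boldsymbol D}^\ast, {\boldsymbol A}^\ast)\Vert_{H^{s+2}(\Omega;\RR^{I+J})}$. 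It then suffices to prove the following bounds, uniformly in $t$:
\begin{align*}
\Vert u_t-u^\ast_t\Vert_{W^{1,\infty}(\Omega)} &\le C\delta, \\
\Vert v_t-v^\ast_t\Vert_{H^1(\Omega)} &\le C\delta\,\bigl(\Vert\dot{\boldsymbol D}\Vert_{L^2}+\Vert\dot{\boldsymbol A}\Vert_{L^2}\bigr).
\end{align*}
Given these, the $H^1$ bound on $m-m^\ast$ follows by the same product estimates as in the proof of \Cref{linear-forward-stability}. That is, $\Vert \dot A_j w\Vert_{H^1}\le \Vert\dot A_j\Vert_{H^1}\Vert w\Vert_{W^{1,\infty}}$ and $\Vert(A_j-A_j^\ast)v_t\Vert_{H^1}\le C\Vert A_j-A_j^\ast\Vert_{W^{1,\infty}}\Vert v_t\Vert_{H^1}$. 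We also use Sobolev embedding $H^{s+2}\hookrightarrow W^{1,\infty}$ for $s>n/2$, together with \eqref{v_t-estimate} and the uniform bound on $\Vert A^\ast_j\Vert_{W^{1,\infty}}$ coming from $c_2$.

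\textbf{Step 1: the difference $u_t-u_t^\ast$.} This function lies in $H^1_0(\Omega)$ and solves
\begin{align*}
L_t(u_t-u^\ast_t) = -(L_t-L^\ast_t)u^\ast_t = \nabla\cdot((D_t-D_t^\ast)\nabla u_t^\ast) - (A_t-A_t^\ast)u^\ast_t .
\end{align*}
The right-hand side belongs to $H^{s}(\Omega)$, with norm at most $C\delta\Vert u^\ast_t\Vert_{H^{s+2}}$. Here we use that $H^{s+1}$ is an algebra for $s>n/2$, and $u^\ast_t$ is bounded in $H^{s+2}$ uniformly in $t$ and over $\mathcal A$ by the elliptic regularity result (\Cref{solution-stability-1}) with $f_t\in H^{s+3/2}$-type data. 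Elliptic regularity for $L_t$, whose coefficients are in $H^{s+2}\subset C^{1}$, gives $\Vert u_t-u_t^\ast\Vert_{H^{s+2}}\le C\delta$. Embedding into $W^{1,\infty}$ then gives the first bound.

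If \Cref{solution-stability-1} only yields $H^{s+1}$ regularity for $f\in H^{s+1/2}$, I would instead aim for $H^{s+1}$ bounds. We only need $W^{1,\infty}$, and $H^{s+1}\hookrightarrow W^{1,\infty}$ requires $s>n/2$, which is exactly the hypothesis. In that case the right-hand side above is placed in $H^{s-1}$.

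\textbf{Step 2: the difference $v_t-v_t^\ast$.} We have $v_t = G_t F_t$ and $v^\ast_t = G^\ast_t F^\ast_t$, where
\begin{align*}
F_t = \nabla\cdot(\dot D_t\nabla u_t)-\dot A_tu_t, \qquad F^\ast_t = \nabla\cdot(\dot D_t\nabla u_t^\ast)-\dot A_tu_t^\ast .
\end{align*}
Split the difference as
\begin{align*}
v_t-v_t^\ast = (G_t-G_t^\ast)F_t + G^\ast_t(F_t-F^\ast_t).
\end{align*}
By \eqref{eq:Gdiff}, $G_t-G^\ast_t = G_t(L^\ast_t-L_t)G^\ast_t$. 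Combining this with \eqref{eq:Lnormdiff}-type bounds gives
\begin{align*}
\Vert G_t-G_t^\ast\Vert_{B(H^{-1},H^1)}\le C\Vert(D_t-D_t^\ast, A_t-A_t^\ast)\Vert_{L^\infty}\le C\delta,
\end{align*}
and $\Vert F_t\Vert_{H^{-1}}\le C(\Vert\dot{\boldsymbol D}\Vert_{L^2}+\Vert\dot{\boldsymbol A}\Vert_{L^2})$ as in \eqref{v_t-estimate}. The remaining term satisfies
\begin{align*}
\Vert F_t-F_t^\ast\Vert_{H^{-1}}\le C\Vert u_t-u_t^\ast\Vert_{W^{1,\infty}}\bigl(\Vert\dot{\boldsymbol D}\Vert_{L^2}+\Vert\dot{\boldsymbol A}\Vert_{L^2}\bigr),
\end{align*}
which is bounded by Step 1. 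All constants are uniform in $t$ by continuity of $p_i,q_j,f$ on the compact set $\TT^d$ and by \Cref{uniform-estimate}.

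\textbf{Main obstacle.} The hardest step is Step 1: obtaining a uniform $W^{1,\infty}$ Lipschitz bound on $u_t-u^\ast_t$ in terms of the $H^{s+2}$ distance. This requires elliptic regularity constants uniform over the admissible class $\mathcal A$, and the correct matching of Sobolev indices between the data $f\in H^{s+1/2}$ and the coefficient regularity. I would rely on \Cref{solution-stability-1}, whose constant should depend only on $c_1,c_2$ through the $H^{s+2}$ bound on the coefficients. I would also track the algebra property of $H^{s'}$ for $s'>n/2$ to control the products appearing in the source term.
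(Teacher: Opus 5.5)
Your proposal is correct and follows the paper's proof essentially step for step: the same three-term splitting of $m-m^\ast$ (you group $A_tv_t-A_t^\ast v_t^\ast$ the other way round, which is immaterial), the resolvent identity \eqref{eq:Gdiff} for $G_t-G_t^\ast$, and the $H^{-1}$ bound on $F_t-F_t^\ast$ via a uniform $W^{1,\infty}$ Lipschitz estimate on $u_t-u_t^\ast$, which the paper simply quotes from \Cref{solution-stability-2}. Only your fallback version of Step 1 is valid (with $f_t\in H^{s+1/2}$ one gets $u_t^\ast\in H^{s+1}$, not $H^{s+2}$), and that $H^{s-1}\to H^{s+1}$ regularity argument followed by $H^{s+1}\hookrightarrow W^{1,\infty}$ is a fine substitute for the paper's $W^{2,p}$-based proof of that lemma.
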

    \begin{proof}
        Let $\dot {\boldsymbol D}\in L^2(\Omega; \RR^I)$ and $\dot {\boldsymbol A}\in H^1(\Omega; \RR^J)$. Denote the corresponding solutions by $u$ and $u^\ast$ to \eqref{frequency-QPAT}, respectively. 
        Recall that $\dot A_t(x) = \dot A(x, t) = \sum_{j=1}^J q_j(t) \dot A_j(x)$. Using~\eqref{eq:measop-linear}, we have
        \begin{align*}
            m(\dot {\boldsymbol D}, \dot {\boldsymbol A}) - m^\ast (\dot {\boldsymbol D}, \dot {\boldsymbol A}) = \int_{\TT^d} \dot A_t (u_t - u_t^\ast)dt + \int_{\TT^d} (A_t - A^\ast_t) v^\ast_t dt + \int_{\TT^d} A_t(v_t - v_t^\ast)dt.
        \end{align*}
        Now let $I_1, I_2, I_3$ denote the $H^1(\Omega)$ norms of the three terms on the right-hand side, respectively. 
        Then the bound on the first integral $I_1$ follows directly from \Cref{solution-stability-2}:
        \begin{align*}
            I_1 &\leq \int_{\TT^d}\Vert \dot A_t\Vert_{H^1(\Omega)}\Vert u_t - u_t^\ast\Vert_{W^{1,\infty}(\Omega)}dt \leq C\sup\nolimits_{t\in \TT^d}\Vert u_t - u_t^\ast\Vert_{W^{1,\infty}(\Omega)} \Vert \dot {\boldsymbol A}\Vert_{H^1(\Omega; \RR^J)}\\
            &\leq C\Vert({\boldsymbol D}, {\boldsymbol A}) - ({\boldsymbol D}^\ast, {\boldsymbol A}^\ast)\Vert_{H^{s+2}(\Omega; \RR^{I + J})}\Vert \dot {\boldsymbol A}\Vert_{H^1(\Omega; \RR^J)}.
        \end{align*}
        For $I_2$, we bound $v_t^\ast$ as in  \eqref{v_t-estimate} to get
        \begin{align*}
            I_2 &\leq \int_{\TT^d} \Vert A_t - A^\ast_t\Vert_{W^{1,\infty}(\Omega)}\Vert v_t^\ast\Vert_{H^1(\Omega)} dt \leq  C\Vert {\boldsymbol A} - {\boldsymbol A}^\ast\Vert_{W^{1,\infty}(\Omega; \RR^J)}\sup\nolimits_{t\in\TT^d}\Vert v_t^\ast\Vert_{H^1(\Omega)}\\
            &\leq C\Vert {\boldsymbol A} - {\boldsymbol A}^\ast\Vert_{H^{s+2}(\Omega; \RR^J)} (\Vert \dot {\boldsymbol D} \Vert_{L^2(\Omega; \RR^I)} + \Vert \dot {\boldsymbol A} \Vert_{H^1(\Omega; \RR^J)}),
        \end{align*}
        For $I_3$, define $g_t \in H^{-1}(\Omega)$ and $g^\ast_t \in H^{-1}(\Omega)$ as
        \begin{align*}
            g_t := \nabla \cdot(\dot D_t\nabla u_t) - \dot A_t u_t\;\;\text{and}\;\; g_t^\ast := \nabla \cdot (\dot D_t\nabla u_t^\ast) - \dot A_t u_t^\ast,
        \end{align*}
        We have $v_t - v_t^\ast = G_tg_t - G_t^\ast g_t^\ast = G_t(g_t - g_t^\ast) + (G_t - G_t^\ast)g_t^\ast$. For $g_t - g^\ast_t$, we have the following bounds from \Cref{solution-stability-2} for all $t\in \TT^d$:
        \begin{align*}
            \Vert g_t - g^\ast_t\Vert_{H^{-1}(\Omega)}  &= \Vert\nabla \cdot(\dot D_t\nabla (u_t - u_t^\ast)) - \dot A_t (u_t - u_t^\ast)\Vert_{H^{-1}(\Omega)}\\
            &\leq C\Vert u_t - u_t^\ast\Vert_{W^{1,\infty}(\Omega)} (\Vert \dot {\boldsymbol D}\Vert_{L^2(\Omega; \RR^I)} + \Vert\dot {\boldsymbol A}\Vert_{L^2(\Omega; \RR^J)})\\
            &\leq C\Vert({\boldsymbol D}, {\boldsymbol A}) - ({\boldsymbol D}^\ast, {\boldsymbol A}^\ast)\Vert_{H^{s+2}(\Omega; \RR^{I + J})}(\Vert \dot {\boldsymbol D}\Vert_{L^2(\Omega; \RR^I)} + \Vert\dot {\boldsymbol A}\Vert_{H^1(\Omega; \RR^J)}),
        \end{align*}
        As in \eqref{eq:L_t}, $g_t$ has the following uniform estimate:
        \begin{align*}
            \Vert g_t\Vert_{H^{-1}(\Omega)} &\leq \Vert\dot D_t\Vert_{L^2(\Omega)}\Vert \nabla u_t\Vert_{L^\infty(\Omega)} + \Vert\dot A_t\Vert_{L^2(\Omega)} \Vert u_t\Vert_{L^\infty(\Omega)} \leq \Vert u\Vert_{W^{1,\infty}(\Omega)}(\Vert \dot D_t\Vert_{L^2(\Omega)} + \Vert\dot A_t\Vert_{L^2(\Omega)})\\
            &\leq C(\Vert \dot {\boldsymbol D}\Vert_{L^2(\Omega; \RR^I)} + \Vert\dot {\boldsymbol A}\Vert_{H^1(\Omega; \RR^J)}).
        \end{align*}
        For the difference of the operators $G_t$ and $G^\ast_t$, using the identity \eqref{eq:Gdiff}
        \begin{align*}
            \Vert G_t - G_t^\ast \Vert_{B(H^{-1}(\Omega), H^1(\Omega))} &= \Vert G_t(L_t^\ast - L_t)G_t^\ast \Vert_{B(H^{-1}(\Omega), H^1(\Omega))} \leq C\Vert L_t^\ast - L_t\Vert_{B(H^1(\Omega), H^{-1}(\Omega))}\\
            &\leq C\Vert({\boldsymbol D} - {\boldsymbol D}^\ast, {\boldsymbol A} - {\boldsymbol A}^\ast)\Vert_{L^\infty(\Omega; \RR^{I+J})} \leq C\Vert({\boldsymbol D} - {\boldsymbol D}^\ast, {\boldsymbol A} - {\boldsymbol A}^\ast)\Vert_{H^{s+2}(\Omega; \RR^{I+J})}
        \end{align*}
        Then, by the above estimates, \eqref{v_t-estimate}, and \Cref{uniform-estimate}
        \begin{align*}
            I_3 &\leq \int_{\TT^d}\Vert A_t\Vert_{W^{1,\infty}(\Omega)} \Vert v_t - v^\ast_t\Vert_{H^1(\Omega)} dt\\
            &\leq C\sup\nolimits_{t\in \TT^d}\left(\Vert G_t - G_t^\ast\Vert_{B(H^{-1}(\Omega), H^1(\Omega))}\Vert g_t\Vert_{H^{-1}(\Omega)} + \Vert G^\ast_t\Vert_{B(H^{-1}(\Omega), H^1(\Omega))} \Vert g_t - g^\ast_t \Vert_{H^{-1}(\Omega)}\right)\\
            &\leq C\Vert({\boldsymbol D}, {\boldsymbol A}) - ({\boldsymbol D}^\ast, {\boldsymbol A}^\ast)\Vert_{H^{s+2}(\Omega; \RR^{I + J})}(\Vert \dot {\boldsymbol D}\Vert_{L^2(\Omega; \RR^I)} + \Vert\dot {\boldsymbol A}\Vert_{H^1(\Omega; \RR^J)}),
        \end{align*}
        for some constant $C > 0$. This concludes the claim.
    \end{proof}
    \subsection{Higher Regularity of the Linearized Map}
    In this section, we precondition the parameters with stronger regularity in the admissible set $\mathcal A_\infty^I \times \mathcal A_\infty^J$. We establish higher solution regularity and boundedness of the linearized operator. These estimates will be used to frame the analysis of local nonlinear stability.
    \begin{lemma}\label{linear-forward-estimate}
        Let $s > n/2$. Suppose that the base point $({\boldsymbol D}, {\boldsymbol A})$ belongs to $\mathcal A_\infty^I \times \mathcal A_\infty^J$ and that $f \in C(\TT^d; H^{s+1/2}(\partial \Omega))$. Then
        \begin{align*}
            \Vert m(\dot {\boldsymbol D}, \dot {\boldsymbol A})\Vert_{H^{s+1}(\Omega)} \leq C \left(\Vert\dot {\boldsymbol D}\Vert_{W^{s+2, \infty}(\Omega; \RR^I)} + \Vert \dot {\boldsymbol A}\Vert_{W^{s+2, \infty}(\Omega; \RR^J)}\right)
        \end{align*}
        holds for some $C>0$ uniformly for all $({\boldsymbol D}, {\boldsymbol A}) \in \mathcal A^I_\infty \times \mathcal A^J_\infty$. As a result, the linearized measurement map $m$ restricts to a bounded linear operator
        \begin{align*}
            m: W^{s+2, \infty}(\Omega; \RR^{I+J})\to H^{s+1}(\Omega).
        \end{align*}
        with a uniform norm bound for all base points $({\boldsymbol D}, {\boldsymbol A}) \in \mathcal A^I_\infty \times \mathcal A^J_\infty$.
    \end{lemma}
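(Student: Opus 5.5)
The estimate follows from elliptic regularity applied frequency by frequency, together with the algebra property of $H^{s+1}(\Omega)$ for $s+1>n/2$. Recall
\begin{align*}
m(\dot{\boldsymbol D},\dot{\boldsymbol A}) = \sum_{j=1}^J \dot A_j\int_{\TT^d}u_t q_j\,dt + \sum_{j=1}^J A_j\int_{\TT^d} v_t q_j\,dt,
\end{align*}
where $v_t = G_t\bigl(\nabla\cdot(\dot D_t\nabla u_t) - \dot A_t u_t\bigr)$. We bound the two sums separately, with uniform-in-$t$ estimates, and then integrate over $\TT^d$ using the boundedness of $q_j$.

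\textbf{Step 1: the forward solution.} Since $D_t,A_t\in W^{s+2,\infty}(\Omega)$ with norms controlled by $c_2$ and $\max|p_i|,\max|q_j|$, and $D_t\ge c_1\min p_i>0$, standard higher-order elliptic regularity for the Dirichlet problem gives
\[
\Vert u_t\Vert_{H^{s+2}(\Omega)}\le C\Vert f_t\Vert_{H^{s+3/2}(\partial\Omega)} .
\]
This is essentially the content of \Cref{solution-stability-1}, which I would invoke or mildly upgrade. I realize there is a half-derivative mismatch: the data are $f\in C(\TT^d;H^{s+1/2}(\partial\Omega))$, so regularity only yields $u_t\in H^{s+1}(\Omega)$ uniformly in $t$, with the constant depending only on $c_1,c_2$ and the weights. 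That is enough here. Since $\dot A_j\in W^{s+2,\infty}$ multiplies $H^{s+1}$ boundedly, the first sum is bounded in $H^{s+1}$ by $C\Vert\dot{\boldsymbol A}\Vert_{W^{s+2,\infty}}$.

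\textbf{Step 2: the linearized solution.} The source is $g_t=\nabla\cdot(\dot D_t\nabla u_t)-\dot A_tu_t$. With $u_t\in H^{s+1}$ and $\dot D_t,\dot A_t\in W^{s+2,\infty}$, we get $\dot D_t\nabla u_t\in H^{s}$, hence $g_t\in H^{s-1}(\Omega)$ with
\[
\Vert g_t\Vert_{H^{s-1}}\le C\bigl(\Vert\dot{\boldsymbol D}\Vert_{W^{s+2,\infty}}+\Vert\dot{\boldsymbol A}\Vert_{W^{s+2,\infty}}\bigr).
\]
Elliptic regularity for $G_t$ with homogeneous Dirichlet data and $W^{s+2,\infty}$ coefficients gives $\Vert v_t\Vert_{H^{s+1}}\le C\Vert g_t\Vert_{H^{s-1}}$, uniformly in $t$. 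Multiplying by $A_j\in W^{s+2,\infty}$ preserves $H^{s+1}$, which bounds the second sum. Strong measurability of $t\mapsto v_t$ in $H^{s+1}$ follows from the continuity of $u_t$ and of $G_t$ shown earlier combined with these uniform bounds, so the Bochner integrals make sense.

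\textbf{Main obstacle.} The delicate point is the uniformity of the elliptic regularity constants in $t$ and in the base point. I would handle it by noting that the $H^{k}\to H^{k+2}$ estimate constant depends only on the ellipticity constant, the $W^{k,\infty}$ (here $k\le s$) norms of the coefficients, and $\Omega$. All of these are uniformly controlled on $\mathcal A_\infty$ via $c_1,c_2$ and the positivity and continuity of the $p_i,q_j$. Taking $s\in\NN$ avoids fractional commutator issues, and an induction on the order of derivatives gives the estimate.
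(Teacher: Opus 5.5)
Your proposal is correct and follows the paper's proof. Both arguments bound the $u_t$-term and the $v_t$-term separately, using a uniform-in-$t$ $H^{s+1}$ elliptic estimate (for $v_t$, with source $g_t\in H^{s-1}$), multiplication by the $W^{s+2,\infty}$ coefficients, and integration against the bounded weights $q_j$. The one correction concerns the cited lemma: the uniform $H^{s+1}$ bound you need is \Cref{Schauder-estimate}, not \Cref{solution-stability-1}, which only gives a $W^{1,\infty}$ bound. As you noticed yourself, an $H^{s+2}$ bound is not available from $H^{s+1/2}$ boundary data, and it is not needed.
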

    \begin{proof}
        We directly take the $H^{s+1}(\Omega)$-norm on \eqref{eq:measop-linear} to obtain
        \begin{align*}
            \Vert m(\dot{\boldsymbol D}, \dot {\boldsymbol A}) \Vert_{H^{s+1}(\Omega)} &\leq C\sum_{j=1}^J \Vert \dot A_j\Vert_{W^{s+1, \infty}(\Omega)}\left\Vert\int_{\TT^d} u_tq_j(t)dt\right\Vert_{H^{s+1}(\Omega)}\\
            &\qquad + \Vert A_j\Vert_{W^{s+1, \infty}(\Omega)}\left\Vert\int_{\TT^d} v_tq_j(t)dt\right\Vert_{H^{s+1}(\Omega)}\\
            &\leq C\left(\sup\nolimits_{t\in\TT^d}\Vert u_t\Vert_{H^{s+1}(\Omega)}\Vert \dot {\boldsymbol A}\Vert_{W^{s+1,\infty}(\Omega)} + \sup\nolimits_{t\in\TT^d}\Vert v_t\Vert_{H^{s+1}(\Omega)}\right).
        \end{align*}
        The bound on $u_t$ follows from \Cref{Schauder-estimate} and continuity in $\TT^d$. For $v_t$, by \Cref{Schauder-estimate} again:
        \begin{align*}
            \Vert v_t\Vert_{H^{s+1}(\Omega)} \leq C\Vert \nabla \cdot (\dot D_t \nabla u_t) - \dot A_t u_t \Vert_{H^{s-1}(\Omega)} \leq C\Vert u_t \Vert_{H^{s+1}(\Omega)}\Vert (\dot {\boldsymbol D}, \dot {\boldsymbol A}) \Vert_{W^{s,\infty}(\Omega; \RR^{I+J})}
        \end{align*}
        which proves the claim. The uniformity of the constant for all $({\boldsymbol D}, {\boldsymbol A}) \in \mathcal A^I_\infty \times \mathcal A^J_\infty$ is also given by \Cref{Schauder-estimate}.
    \end{proof}
    \begin{remark}
        In fact, the proof of Lemma~\ref{linear-forward-estimate} shows the stronger result that $m$ extends to a bounded linear operator 
        $m: W^{s+1, \infty}(\Omega; \RR^{I+J})\to H^{s+1}(\Omega)$.
        Here, we use the restricted domain $W^{s+2,\infty}(\Omega; \RR^{I+J})$ in order to match the choice of spaces later in \Cref{section-6}. Also note that $s+2$ regularity is needed on the parameters for CGO constructions in \Cref{cgo-1}.        
    \end{remark}
    \section{Linearized Inverse Well-Posedness and Stability}\label{section-5}
    In this section, we study the linearized inverse problem for the frequency QPAT model. We show that with multiple measurements available, the system of linearized operators is invertible and satisfies Lipschitz inverse stability. We first establish this claim for the classical QPAT problem by strengthening existing linear analysis results from \cite{Kuchment2012, Kuchment2015}; we then use the frequency-independent result to prove the frequency-dependent case. 
    
    \subsection{Linear Inverse Stability for Frequency-Independent QPAT}\label{section-5-1}

    In this section, we establish the inverse stability result for the frequency-independent linearized QPAT problem induced by \eqref{QPAT}. In other words, we take $I = J = 1$ with $p = q = 1$, and we use the notation $D, A, L$ and $G$ in place of $D_t, A_t, L_t$ and $G_t$. We assume the base point coefficients satisfy $(D, A)\in \mathcal A \times \mathcal A$ for $s > n/2$, and we consider the perturbation $(\dot D, \dot A) \in L^2(\Omega') \oplus H_0^1(\Omega')$, where $\Omega'\subset\!\subset \Omega$ is a precompact subset.
    \par
    Recall the $2n$ real CGO solutions considered in \cite{Bal2010}. 
    By \Cref{cgo-3}, there exist $2n$ real solutions $w_k\in H^{s+1}(\Omega)$ to the Schr\"odinger equation such that the matrix field given by $(\nabla w_{2k-1} w_{2k} - w_{2k-1} \nabla w_{2k})_{k=1}^n \in L^\infty(\Omega; \RR^{n\times n})$ is invertible with bounded inverse on $\Omega$.
    Then $u_k := \frac{1}{\sqrt{D}} w_k \in H^{s+1}(\Omega)$ gives $2n$ real solutions to \eqref{QPAT}, $k = 1,\dots,2n$. We set 
    \begin{align*}
        \beta_k := \nabla u_{2k-1} u_{2k} - u_{2k-1} \nabla u_{2k} \;\; \text{on} \;\; \Omega, \;\; \qquad k = 1,\dots, n,
    \end{align*}
    which are $n$ vector fields defined by the solutions $u_k$.
    \begin{lemma} \label{lem:beta}
        Let $s>n/2$ and $(D, A)\in \mathcal A \times \mathcal A$. Then 
        \begin{align*}
        \beta := (\beta_1,\ldots, \beta_n) \quad \in L^\infty(\Omega; \RR^{n\times n})
        \end{align*}
        defines an invertible matrix field on $\Omega$ and satisfies
        \begin{equation} \label{CGO-vector-field}
            \nabla D\cdot \beta_k = - D(\Delta u_{2k-1} u_{2k} - u_{2k-1}\Delta u_{2k})\;\;\text{for each}\;\; k.
        \end{equation}
    \end{lemma}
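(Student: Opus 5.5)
The lemma has two parts: invertibility of the matrix field $\beta$, and the identity \eqref{CGO-vector-field}. I will treat them separately.

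\emph{Invertibility.} Since $u_k = D^{-1/2} w_k$, the product rule gives $\nabla u_k = D^{-1/2}\nabla w_k - \tfrac12 D^{-3/2} w_k \nabla D$. Substituting into $\beta_k$, the terms involving $\nabla D$ cancel:
\begin{align*}
\beta_k = D^{-1}\bigl(\nabla w_{2k-1} w_{2k} - w_{2k-1}\nabla w_{2k}\bigr) - \tfrac12 D^{-2}\nabla D\,\bigl(w_{2k-1}w_{2k} - w_{2k-1}w_{2k}\bigr) = D^{-1}\bigl(\nabla w_{2k-1} w_{2k} - w_{2k-1}\nabla w_{2k}\bigr).
\end{align*}
Hence $\beta = D^{-1}\tilde\beta$, where $\tilde\beta$ is the matrix field from \Cref{cgo-3}. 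That matrix is invertible with bounded inverse. Moreover $D > c_1$ and $D \in H^{s+2}\subset L^\infty$ because $s>n/2$. Therefore $\beta\in L^\infty$ and $\beta^{-1} = D\tilde\beta^{-1}\in L^\infty$. The $L^\infty$ bound on $\beta$ itself uses $w_k\in H^{s+1}\subset W^{1,\infty}$, which holds since $s>n/2$.

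\emph{The identity.} Each $u_k$ solves $-\nabla\cdot(D\nabla u_k) + Au_k = 0$. Expanding the divergence, this reads $D\Delta u_k + \nabla D\cdot\nabla u_k = A u_k$. Multiply the equation for $u_{2k-1}$ by $u_{2k}$, multiply the equation for $u_{2k}$ by $u_{2k-1}$, and subtract. The absorption terms cancel because $Au_{2k-1}u_{2k} - Au_{2k}u_{2k-1}=0$. What remains is
\begin{align*}
D(\Delta u_{2k-1}u_{2k} - u_{2k-1}\Delta u_{2k}) + \nabla D\cdot(\nabla u_{2k-1}u_{2k} - u_{2k-1}\nabla u_{2k}) = 0,
\end{align*}
which is exactly \eqref{CGO-vector-field}.

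\emph{Justification of the pointwise calculus.} The only delicate point is regularity. We have $u_k\in H^{s+1}$ with $s>n/2$, so $\Delta u_k\in H^{s-1}\subset L^2$ and $\nabla u_k\in L^\infty$. Also $D\in H^{s+2}$. So every product above makes sense, the identity holds almost everywhere, and the expanded form of the PDE is valid in $L^2$.

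I expect the main obstacle to be ensuring that \Cref{cgo-3} supplies the invertibility of $\tilde\beta$ uniformly on all of $\Omega$, for real solutions. Given that, both steps above are direct computations.
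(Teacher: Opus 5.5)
Your proposal is correct and follows the same route as the paper. The paper substitutes $u_k = w_k/\sqrt{D}$ to get $\beta_k = D^{-1}(\nabla w_{2k-1}w_{2k} - w_{2k-1}\nabla w_{2k})$, deduces invertibility from Corollary~\ref{cgo-3} together with the two-sided bounds on $D$, and obtains the identity by expanding $u_{2k}Lu_{2k-1} - u_{2k-1}Lu_{2k} = 0$; your explicit product-rule computation and regularity justification simply fill in details the paper leaves implicit.
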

    \begin{proof}
        We directly insert $u_{2k-1} = \tfrac{1}{\sqrt{D}} w_{2k-1}$ and $u_{2k} = \tfrac{1}{\sqrt{D}} w_{2k}$ into the definition of $\beta_k$:
        \begin{align*}
            \beta_k &= \nabla u_{2k-1} u_{2k} - u_{2k-1} \nabla u_{2k} \\
            & = \nabla \left(\frac{1}{\sqrt{D}}\right)w_{2k-1}\frac{1}{\sqrt{D}}w_{2k} + \frac{1}{D}\nabla w_{2k-1}w_{2k} - \nabla \left(\frac{1}{\sqrt{D}}\right)w_{2k}\frac{1}{\sqrt{D}}w_{2k-1} - \frac{1}{D}\nabla w_{2k}w_{2k-1}\\
            &= \frac{1}{D}(\nabla w_{2k-1}w_{2k} - w_{2k-1} \nabla w_{2k}).
        \end{align*}
        As $D$ is lower and upper bounded by positive constants on $\Omega$, the matrix field $\beta$ is invertible by \Cref{cgo-3}.
        The identity follows from the elliptic PDE $L u_k = - \nabla D\cdot\nabla u_k - D\Delta u_k + Au_k = 0$; expanding $u_{2k-1}Lu_{2k} - u_{2k}Lu_{2k-1}$ gives the identity above.
    \end{proof}
    We define measurement operators $M_k = A(x)u_k(x)$ whose linearizations are $m_k(\dot D, \dot A) = \dot A(x) u_k(x) + A(x) v_k(x)$, where $v_k$ solves the linearized equation \eqref{linearized-QPAT} with $u = u_k$ for each of the $2n$ CGO solutions. Then the system of linearized measurement operators can be collected as
    \begin{equation}\label{eq:linearized-system}
        \begin{aligned}
            &{\boldsymbol m} := (m_k)_{k=1}^{2n} = (m_1,\ldots, m_{2n}) : L^2(\Omega)\oplus H^1_0(\Omega) \rightarrow H^1(\Omega; \RR^{2n}),\\
            &\text{where each } m_k \text{ corresponds to the CGO boundary data } f_k = u_k\vert_{\partial\Omega}.
        \end{aligned}
    \end{equation}
    We now show the inverse stability of this system of linearized measurement operators. To apply standard pseudodifferential operator theory, we assume for the moment that the base point $(D, A)$ is a $C^\infty(\overline\Omega)$ admissible pair. The result extends to all $(D, A)\in \mathcal A \times\mathcal A$ by \Cref{linear-perturbation-analysis}; see \Cref{perturbation-remark}. Under this assumption, \cite[Theorem 4.1]{Kuchment2012} gives the following inverse stability result through pseudodifferential operator analysis. Recall that $\Omega'\subset\!\subset \Omega$ is a precompact subset.
    \begin{lemma}[{\cite[Theorem 4.1]{Kuchment2012}}]\label{semi-fredholm}
        Given the above system of $2n$ measurements in \eqref{eq:linearized-system}, a smooth admissible pair $(D, A)\in (\mathcal A \times \mathcal A) \cap C^\infty(\overline\Omega)^2$, and $s > n/2$, the linearized operator
        \begin{align*}
            {\boldsymbol m} : L^2(\Omega') \oplus H^1_0(\Omega') \to H^1(\Omega; \RR^{2n}), \qquad (\dot D, \dot A) \mapsto (m_1(\dot D, \dot A),\ldots, m_{2n}(\dot D, \dot A))
        \end{align*}
        is upper semi-Fredholm, i.e., the image of ${\boldsymbol m}$ is closed in $H^1(\Omega; \RR^{2n})$ and the kernel of ${\boldsymbol m}$ is finite-dimensional in $L^2(\Omega')\oplus H^1_0(\Omega')$, and the following Lipschitz inverse stability estimate holds:
        \begin{align*}
            \Vert (\dot D, \dot A)\Vert_{L^2(\Omega')\oplus H^1(\Omega') / \ker{\boldsymbol m}} \leq C\Vert {\boldsymbol m}(\dot D, \dot A)\Vert_{H^1(\Omega; \RR^{2n})}
        \end{align*}
        for some constant $C>0$.
    \end{lemma}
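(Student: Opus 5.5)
The statement is \cite[Theorem 4.1]{Kuchment2012}, so the plan is to reconstruct its proof: reduce $\boldsymbol m$ to an elliptic pseudodifferential system in the sense of Douglis--Nirenberg, then read off the semi-Fredholm property and the estimate modulo kernel from a left parametrix.

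\emph{Step 1: eliminate $v_k$.} Since $A\ge c_1>0$, knowing $m_k$ is equivalent to knowing
\begin{align*}
v_k = A^{-1}\bigl(m_k - \dot A\, u_k\bigr).
\end{align*}
Substituting this into $Lv_k = \nabla\cdot(\dot D\nabla u_k) - \dot A u_k$ gives, for each $k$,
\begin{align*}
L\bigl(A^{-1}\dot A\,u_k\bigr) + \nabla\cdot(\dot D\nabla u_k) - \dot A u_k = L\bigl(A^{-1}m_k\bigr).
\end{align*}
Next I take the pairwise combinations $u_{2k}\,(\text{eq}_{2k-1}) - u_{2k-1}\,(\text{eq}_{2k})$. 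In these combinations the leading second-order terms in $\dot A$ cancel, because the coefficient $A^{-1}\dot A$ is common to both equations. What survives at top order is
\begin{align*}
\nabla\dot D\cdot\beta_k \quad\text{and}\quad -D\,\nabla(A^{-1}\dot A)\cdot\beta_k ,
\end{align*}
plus zeroth-order terms in $\dot D$ and $\dot A$; these are handled via the identity \eqref{CGO-vector-field}.

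\emph{Step 2: build the elliptic system.} Complement the $n$ equations from Step 1 with the individual equations, which carry the principal term $-D\Delta(A^{-1}\dot A)\,u_k$. Measuring $\dot D$ in $L^2$ and $\dot A$ in $H^1$, I would apply the Douglis--Nirenberg weights to the resulting overdetermined system $\mathcal P(\dot D,\dot A)=\mathcal B\,\boldsymbol m$ with $\mathcal B$ bounded $H^1\to H^{-1}$. The key claim is that the principal symbol matrix at $(x,\xi)$, $\xi\ne0$, is injective.

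\emph{Step 3: check injectivity of the symbol.} The rows for $\dot A$ carry $|\xi|^2 u_k$. Since the CGO solutions do not vanish simultaneously (as $\beta$ is invertible, not all $u_k$ vanish), the $\dot A$-component is determined. The rows for $\dot D$ then carry $i\xi\cdot\beta_k$, $k=1,\dots,n$. By Lemma~\ref{lem:beta}, $\beta$ is invertible, so $(\xi\cdot\beta_k)_k\ne0$ for $\xi\ne0$, and the $\dot D$-component is determined as well.

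\emph{Step 4: apply the parametrix.} Smoothness of $(D,A)$ allows standard $\Psi$DO calculus. A left parametrix $Q$ with $Q\mathcal P=\operatorname{id}+K$, $K$ compact on $L^2(\Omega')\oplus H^1_0(\Omega')$ (compactly supported perturbations, which is why $\Omega'$ appears), yields
\begin{align*}
\|(\dot D,\dot A)\|\le C\|\boldsymbol m\|_{H^1}+C\|K(\dot D,\dot A)\|.
\end{align*}
By Peetre's lemma this estimate gives a finite-dimensional kernel and closed range, and the standard compactness argument upgrades it to the estimate modulo $\ker\boldsymbol m$.

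I expect the main obstacle to be the bookkeeping of orders in Steps 1–2: choosing the Douglis--Nirenberg weights so that the $\dot D$ terms, which are first order on $L^2$ data, and the $\dot A$ terms, which are second order on $H^1$ data, balance. It also has to be verified that the cancellation in the pairwise combinations leaves exactly $\xi\cdot\beta_k$ as the principal symbol for $\dot D$.
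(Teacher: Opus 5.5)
Your argument is correct in substance, but it takes a different route from the one the paper relies on.

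The paper gives no proof of its own; it cites \cite[Theorem 4.1]{Kuchment2012}, and \Cref{perturbation-remark} shows what that argument is. There, $\boldsymbol m$ itself is treated as a $2n\times 2$ matrix of pseudodifferential operators, using that $G$ is, away from $\partial\Omega$, a pseudodifferential operator of order $-2$. Its rows have principal symbol $(\tfrac{i\xi\cdot\nabla u_k}{|\xi|^2},\,u_k)$, up to the positive factor $A/D$ in the first entry. Ellipticity of the overdetermined system then comes from the $2\times 2$ minors $\tfrac{i}{|\xi|^2}\,\xi\cdot\beta_k$, one of which is nonzero for every $\xi\neq 0$.

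You instead compose with $L\circ A^{-1}$. This removes $G$ exactly and leaves a Douglis--Nirenberg elliptic differential system whose right-hand side $L(A^{-1}\boldsymbol m)$ is bounded in $H^{-1}$ by $\Vert\boldsymbol m\Vert_{H^1}$. The a priori estimate for $\mathcal P$ therefore transfers to $\boldsymbol m$ at once, and since $A^{-1}m_k\in H^1_0(\Omega)$ one even has $\ker\mathcal P=\ker\boldsymbol m$. Your individual rows have symbol $(i\xi\cdot\nabla u_k,\ Du_k|\xi|^2/A)$, i.e.\ the rows of $\boldsymbol m$ multiplied by $D|\xi|^2/A$, so the ellipticity criterion is literally the same.

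Your route needs only interior estimates for a differential operator with smooth coefficients and never appeals to the pseudodifferential structure of the Dirichlet Green operator. The cited route gives the symbol of $\boldsymbol m$ itself, which is what the paper uses in \Cref{perturbation-remark} to argue that the constant depends only on $W^{1,\infty}$ bounds for the $u_k$ and on $\beta$.

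There are a few slips to fix.
\begin{itemize}
\item In the pairwise combinations, the surviving first-order $\dot A$-term is $-2D\,\nabla(A^{-1}\dot A)\cdot\beta_k$, not $-D\,\nabla(A^{-1}\dot A)\cdot\beta_k$ (compare the proof of \Cref{injectivity}). This is harmless, because with your weights it lies below the principal order of the $\dot A$-column.
\item More importantly, Step 3 runs backwards. The individual rows also carry the principal $\dot D$-entry $i\xi\cdot\nabla u_k$, so they cannot determine the $\dot A$-component first. The triangular structure goes the other way. The combination rows have principal symbol $(i\xi\cdot\beta_k,\,0)$, which forces the $\dot D$-component to vanish because $\beta$ is invertible. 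Only then do the individual rows, together with $u_k(x)\neq 0$ for some $k$, force the $\dot A$-component to vanish. Alternatively, drop the combinations and note that the $2\times 2$ minors of the individual rows are $\tfrac{iD|\xi|^2}{A}\,\xi\cdot\beta_k$.
\item \eqref{CGO-vector-field} plays no role here. The zeroth-order terms are lower order for the semi-Fredholm estimate and only matter in the injectivity proof.
\end{itemize}
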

    \begin{remark} \label{perturbation-remark}
        Note that here we consider $L^2(\Omega') \oplus H^1_0(\Omega')$ as a subspace of $L^2(\Omega) \oplus H^1(\Omega)$ and restrict ${\boldsymbol m}$ to the smaller domain. The semi-Fredholm property directly implies the inverse stability estimate modulo the kernel of the linearized map ${\boldsymbol m}$.
        \par
        We also observe that the above inverse stability constant $C$ does not essentially depend on the smoothness of $D$ and $A$. In other words, the constant $C$ can be chosen uniformly for all smooth $(D, A)\in \mathcal A\times\mathcal A$.
        Recall the analysis of the principal symbol of a coupled measurement $(m_{2k-1}, m_{2k})$ \cite[Equation (49)]{Kuchment2012}, Assuming smooth coefficients for simplicity, this symbol is determined by the matrix
        \begin{align*}
            \sigma_{\operatorname{pr}} \begin{pmatrix}
                m_{2k-1} \\ m_{2k}
            \end{pmatrix}(x,\xi) &= \begin{bmatrix}
                \frac{i\xi\cdot \nabla u_{2k-1}(x)}{\vert\xi\vert^2} & u_{2k-1}(x)\\
                \frac{i \xi \cdot \nabla u_{2k}(x)}{\vert \xi \vert^2} & u_{2k}(x)
            \end{bmatrix},\\
            \text{while}\quad \det \sigma_{\operatorname{pr}}
            \begin{pmatrix}
                m_{2k-1} \\ m_{2k}
            \end{pmatrix} (x, \xi)
            &= \frac{i}{\vert\xi\vert^2}\left(\xi\cdot(\nabla u_{2k-1}(x) {u}_{2k}(x) - u_{2k-1}(x) \nabla u_{2k}(x))\right),
        \end{align*}
        It is therefore evident that the constant $C$ in the inverse bound of ${\boldsymbol m}$ is determined by the $W^{1,\infty}(\Omega)$ norms of the functions $u_k$ and the matrix field $\beta$ (for details, see \cite[Section 4]{Kuchment2012}). By \Cref{uniform-estimate-2} and \Cref{cgo-3}, $C$ can be chosen uniformly over all smooth $(D, A) \in \mathcal A\times\mathcal A$.
    \end{remark}
    To establish inverse stability, it remains to show that the kernel of the above map on $L^2(\Omega') \oplus H^1_0(\Omega')$ is trivial, which establishes linearized inverse uniqueness: 
    \begin{proposition}\label{injectivity}
        Given the above system of $2n$ measurements determined by well-chosen CGO boundary data as in \eqref{eq:linearized-system}, with $(D, A)\in \mathcal A \times \mathcal A$ and $s > n/2$, the linearized operator defined above
        is injective on the space of $L^2(\Omega') \oplus H^1_0(\Omega')$.
    \end{proposition}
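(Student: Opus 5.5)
The plan is to show directly that any $(\dot D,\dot A)\in L^2(\Omega')\oplus H^1_0(\Omega')$ with ${\boldsymbol m}(\dot D,\dot A)=0$ satisfies a coupled first-order/elliptic system. Both unknowns vanish on the collar $\Omega\setminus\Omega'$, so unique continuation should force them to vanish everywhere. Concretely, $m_k=0$ gives $v_k=-\rho u_k$ for all $k$, where $\rho:=\dot A/A\in H^1_0(\Omega')$; this uses $A\in\mathcal A$ bounded below and in $W^{1,\infty}$. Since $Lu_k=0$, a direct computation gives
\begin{align*}
-L(\rho u_k)=\nabla\cdot(D u_k\nabla\rho)+D\nabla\rho\cdot\nabla u_k ,
\end{align*}
so the linearized equation \eqref{linearized-QPAT} becomes, for each $k$,
\begin{align*}
E_k:\quad \nabla\cdot(Du_k\nabla\rho)+D\nabla\rho\cdot\nabla u_k=\nabla\cdot(\dot D\nabla u_k)-A\rho u_k .
\end{align*}

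\textbf{Step 1 (transport relation for $\dot D$).} I form $u_{2k}E_{2k-1}-u_{2k-1}E_{2k}$. The $A\rho u$ terms cancel, and the right side becomes $\nabla\cdot(\dot D\beta_k)$. Expanding the left side gives $2D\beta_k\cdot\nabla\rho$. Hence, in the distributional sense,
\begin{align*}
\nabla\cdot(\dot D\beta_k)=2D\beta_k\cdot\nabla\rho,\qquad k=1,\dots,n .
\end{align*}
Writing this as $\beta^T\nabla\dot D=2D\beta^T\nabla\rho-\dot D\,(\nabla\cdot\beta_k)_k$ and inverting $\beta$ (Lemma~\ref{lem:beta}, with $\nabla\cdot\beta_k\in L^\infty$ via \eqref{CGO-vector-field} and $s>n/2$), I obtain
\begin{align*}
\nabla\dot D=2D\nabla\rho+\dot D\gamma,\qquad \gamma\in L^\infty(\Omega;\RR^n).
\end{align*}
In particular $\dot D\in H^1$. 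Setting $w:=\dot D-2D\rho$, this becomes a zeroth-order relation
\begin{align*}
\nabla w=a\,w+b\,\rho,\qquad a,b\in L^\infty .
\end{align*}

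\textbf{Step 2 (elliptic equation for $\rho$).} I substitute $\nabla\dot D$ from Step 1 into $E_k$, using $\nabla\cdot(\dot D\nabla u_k)=\nabla\dot D\cdot\nabla u_k+\dot D\Delta u_k$. The first-order terms in $\rho$ cancel, leaving
\begin{align*}
u_k\bigl(\nabla\cdot(D\nabla\rho)-A\rho\bigr)=\dot D\,c_k,\qquad c_k:=\gamma\cdot\nabla u_k+\Delta u_k .
\end{align*}
To avoid dividing by a possibly vanishing CGO solution, I multiply the equations for $2k-1$ and $2k$ by $u_{2k-1}$ and $u_{2k}$ and add. Now $u_{2k-1}^2+u_{2k}^2>0$ everywhere, since a common zero would force $\beta_k=0$, contradicting invertibility of $\beta$. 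Continuity of the $u_k$ on $\overline\Omega$ (from $H^{s+1}$, $s>n/2$) then gives a uniform lower bound, and dividing yields
\begin{align*}
\nabla\cdot(D\nabla\rho)=A\rho+e\,(w+2D\rho),\qquad e\in L^\infty .
\end{align*}

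\textbf{Step 3 (unique continuation).} The pair $(\rho,w)$ thus solves a system with $L^\infty$ lower-order couplings: a scalar second-order elliptic equation with Lipschitz principal part $D$, coupled to $\nabla w=aw+b\rho$. Both components vanish on the open set $\Omega\setminus\overline{\Omega'}$, and $\Omega$ is connected. I would run a Carleman estimate for $\nabla\cdot(D\nabla\cdot)$ with weight $e^{\tau\phi}$ for $\rho$, together with the trivial Carleman estimate for the gradient, $\tau\|e^{\tau\phi}w\|\lesssim\|e^{\tau\phi}\nabla w\|$ for a weight with $|\nabla\phi|\ge c>0$. The zeroth-order coupling terms are then absorbed for large $\tau$, giving $\rho=w=0$, hence $\dot A=A\rho=0$ and $\dot D=w+2D\rho=0$.

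Alternatively, one could note that $\nabla w=aw+b\rho$ gives $\|w\|$ controlled by $\|\rho\|$ along rays entering from the collar (Gronwall), and then apply standard weak unique continuation for the scalar elliptic equation with a nonlocal but bounded zeroth-order term. The Carleman route seems cleaner. This Step 3, i.e.\ a unique continuation statement for the mixed first-order/second-order system with merely bounded coefficients, is where I expect the main difficulty: the weights and regularity need care, and $\gamma,c_k$ are only $L^\infty$ (this is where $s>n/2$ and $D,A\in H^{s+2}$ are used). Finally, the argument is pointwise in the base point, so no smoothness beyond $\mathcal A$ is needed for injectivity.
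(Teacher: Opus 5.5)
Your proposal is correct, and Step 1 follows the paper's computation: you pair the equations for $u_{2k-1},u_{2k}$ to produce $\beta_k$ and then invert $\beta$. The difference is that you stop one identity short, and so you end up needing a unique continuation theorem that the paper never uses.

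\textbf{The missed identity.} The identity \eqref{CGO-vector-field}, which you already invoke, says $(\nabla\cdot\beta_k)_k=-\beta^T\nabla D/D$. So your $\gamma$ is not a generic $L^\infty$ field: $\gamma=\nabla D/D$ exactly. Consequently:
\begin{itemize}
\item $b=2D\gamma-2\nabla D\equiv 0$ and $a=\nabla D/D$, so $\nabla(w/D)=0$.
\item Since $w$ vanishes on the collar and $\Omega$ is connected, $w\equiv 0$. This is exactly the paper's relation $\nabla(\dot A/A)=\tfrac12\nabla(\dot D/D)$, i.e.\ $\dot D=2D\dot A/A$.
\item Likewise $c_k=\nabla\cdot(D\nabla u_k)/D=Au_k/D$, so Step 2 collapses to $u_kL\rho=0$ for every $k$.
\end{itemize}
A small correction: the $A\rho$ term in your Step 2 should carry a plus sign, $u_k(\nabla\cdot(D\nabla\rho)+A\rho)=\dot D c_k$. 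This is harmless for your argument. Once $L\rho=0$ with $\rho\in H^1_0$, uniqueness for the Dirichlet problem gives $\rho=0$. The Carleman step you identified as the main difficulty is therefore unnecessary.

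\textbf{What each route buys.} Your route does not depend on the exact algebraic cancellation, so it would survive a generic bounded zeroth-order coupling. The price is a unique continuation result for a coupled first/second-order system, which is standard with a common convexified weight but considerably heavier. The paper's route is elementary: it needs only integration of an exact gradient and Dirichlet uniqueness.

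\textbf{Where your version is better.} Dividing by $u_{2k-1}^2+u_{2k}^2$ is a cleaner way to pass from $u_kL\rho=0$ to $L\rho=0$ than the paper's appeal to ``$u_k\neq 0$ a.e.'' This quantity is bounded below because $\beta$ is invertible and the $u_k$ are continuous. The paper's appeal is weaker because $L\rho$ is only an $H^{-1}$ distribution and could a priori concentrate on the zero set of a single $u_k$.
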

    \begin{proof}
        Our method mirrors \cite[Section 3.2]{Bal2010}. We rewrite the perturbed measurement operator as follows. Let $G = L^{-1}$ denote the solution operator to $L$ with $G : H^{-1}(\Omega) \rightarrow H^1_0(\Omega)$. Given a solution $u$ to \eqref{QPAT}, since $\dot A$ is compactly supported, $\dot A u/A \in H^1_0(\Omega)$ lies in the image of $G$. Therefore, any single linearized measurement $m$ defined in \eqref{linearized-measurement} can be expressed as
        \begin{align*}
            m &= \dot A(x) u(x) + A(x) v(x) = AG\left(L\left(\frac{\dot A}{A} u\right) + \nabla\cdot(\dot D\nabla u) - \dot A u\right)\\
            &= A G\left(-\nabla \cdot (D\nabla\left(\dot A u/ A\right))+ \nabla \cdot (\dot D\nabla u)\right),
        \end{align*}
        where we used the representation $v=G (\nabla\cdot(\dot D\nabla u) - \dot A u)$ following from \eqref{linearized-QPAT}.
        Since $G$ is injective and $A > 0$ on $\Omega$, for a pair of perturbations $(\dot D, \dot A) \in L^2(\Omega') \oplus H^1_0(\Omega')$, $m_k = 0$ for all $k$ if and only if $\dot D$ and $\dot A$ satisfy the following PDE system for all $1\leq k\leq 2n$
        \begin{equation} \label{eq:ukeqn}
            -\nabla \cdot \left(D\nabla\left(\frac{\dot A}{A} u_k \right)\right)+ \nabla \cdot (\dot D\nabla  u_k) = 0,
        \end{equation}
        Expanding this expression into terms involving $u_k$, $\nabla u_k$, and $\Delta u_k$, we obtain
        \begin{align*}
            \nabla u_k \cdot \left(\nabla \dot D - \nabla\left(D \frac{\dot A}{A}\right) - D \nabla\left(\frac{\dot A}{A}\right)\right) = \Delta u_k \left( D \frac{\dot A}{A} - \dot D\right) + u_k\nabla\cdot\left(D\nabla\left(\frac{\dot A}{A}\right)\right).
        \end{align*}
        Denote the expression on the left-hand side (and hence also on the right-hand side) by $B_k$. Computing the combination $B_{2k-1} u_{2k} - u_{2k-1}B_{2k}$ using the left-hand and right-hand expressions for $B_{2k-1}$ and $B_{2k}$, respectively, we obtain the following two equalities:
        \begin{align*}
            B_{2k-1} u_{2k} - u_{2k-1}B_{2k} &= (u_{2k}\nabla u_{2k-1} - u_{2k-1}\nabla u_{2k})\cdot \left(\nabla \dot D - \nabla\left(D \frac{\dot A}{A}\right) - D \nabla\left(\frac{\dot A}{A}\right)\right)\\
            &= \beta_k \cdot \left(\nabla \dot D - \nabla\left(D\frac{\dot A}{A}\right) - D\nabla\left(\frac{\dot A}{A}\right)\right),\\
            B_{2k-1} u_{2k} - u_{2k-1}B_{2k} &= (\Delta u_{2k-1}u_{2k} - \Delta u_{2k}u_{2k-1}) \left( D \frac{\dot A}{A} - \dot D\right)\\
            &= -\beta_k \cdot \frac{\nabla D}{D}\left( D \frac{\dot A}{A} - \dot D\right),
        \end{align*}
        where we used \eqref{CGO-vector-field} in deriving the second equality. Hence, we have the following matrix equation
        \begin{align*}
            \beta \cdot \left(\nabla \dot D - \nabla\left(D\frac{\dot A}{A}\right) - D\nabla\left(\frac{\dot A}{A}\right)\right) = -\beta \cdot \frac{\nabla D}{D} \left(D\frac{\dot A}{A} - \dot D\right).
        \end{align*}
        By Lemma~\ref{lem:beta}, $\beta$ defines an invertible matrix at almost every point in $\Omega$. Applying $\beta^{-1}$, we see that $\dot D$ and $\dot A$ solve the following first-order equation
        \begin{align*}
            \nabla \dot D - \nabla\left(D\frac{\dot A}{A}\right) - D \nabla\left(\frac{\dot A}{A}\right) = \frac{\nabla D}{D}\left(\dot D - D\frac{\dot A}{A}\right)
        \end{align*}
        in the sense of distributions,
        or after simplifying:
        \begin{align*}
            \nabla\left( \frac{\dot A}{A}\right) = \frac{1}{2}\nabla\left(\frac{\dot D}{D}\right),
        \end{align*}
        since $\dot A = \dot D = 0$ on $\Omega - \Omega'$, this immediately implies $\dot A/A = \dot D/(2D)$ on $\Omega$. Inserting $\dot{D} = 2D \dot{A}/A$ in~\eqref{eq:ukeqn}, we see that
        $\dot A / A$ solves the partial differential equation
        \begin{align*}
            -\nabla\cdot \left(D \nabla \left(u_k \frac{\dot A}{A}\right)\right) + 2\nabla\cdot\left(D \frac{\dot A}{A} \nabla u_k\right) = 0
        \end{align*}
        for all $1\le k \le 2n$.
        After some rearranging, we obtain the equation
        \begin{align*}
            -u_k\nabla\cdot \left(D\nabla \frac{\dot A}{A}\right) + \frac{\dot A}{A}\nabla\cdot (D\nabla u_k) = -u_k\nabla\cdot \left(D\nabla \frac{\dot A}{A}\right) + \frac{\dot A}{A}(Au_k) = u_kL\left(\frac{\dot A}{A}\right) = 0.
        \end{align*}
        Hence we see that $\dot A/A$ solves the PDE defined by $L$ with zero trace value. By \Cref{cgo-1}, $u_k = \frac{1}{\sqrt{D}} w_k$ is nonzero a.e. on $\Omega$ for sufficiently large $\xi$. It follows immediately that $\dot A = 0$. 
    \end{proof}
    Hence, combining the previous two results, \Cref{semi-fredholm} and \Cref{injectivity}, we obtain the linearized inverse stability result in \Cref{prop}: 
    \begin{proof}[Proof of~\Cref{prop}]
        We first observe that the claim holds for smooth base point parameters $D$ and $A$. By \Cref{semi-fredholm} and \Cref{injectivity}, for $s > n/2$ and all smooth $(D, A)\in \mathcal A \times\mathcal A \cap C^\infty(\overline\Omega)^2$, given the system of $2n$ measurements in \eqref{eq:linearized-system}, there exists a constant $C>0$ such that the system of maps ${\boldsymbol m} := (m_k)_{k=1}^{2n} = (m_1,\ldots, m_{2n}) : L^2(\Omega')\oplus H^1_0(\Omega') \rightarrow H^1(\Omega; \RR^{2n})$ satisfies the bi-Lipschitz estimate:
        \begin{align*}
            C^{-1}\Vert {\boldsymbol m}(\dot D, \dot A)\Vert_{H^1(\Omega; \RR^{2n})} \leq \Vert\dot D\Vert_{L^2(\Omega)} + \Vert\dot A\Vert_{H^1(\Omega)} \leq C\Vert {\boldsymbol m}(\dot D, \dot A)\Vert_{H^1(\Omega; \RR^{2n})},
        \end{align*}
        where the constant $C$ can be chosen uniformly, as shown in \Cref{perturbation-remark}.
        
        We now argue that the smoothness assumption on $(D, A)$ can be lifted, i.e., the linear inverse bound holds everywhere in the admissible set (cf.~\cite[Proposition 2(b)]{Stefanov2009}). This essentially follows from \Cref{linear-perturbation-analysis}.
        \par
        Let $(D^\ast, A^\ast) \in \mathcal A\times\mathcal A$ be a pair of base point coefficients and denote by ${\boldsymbol m}^\ast$ the system of $2n$ linearized measurements corresponding to $(D^\ast, A^\ast)$. Given any $\eps>0$, one can always pick $(D, A)\in \mathcal A\times\mathcal A$, with $D, A\in C^\infty(\overline\Omega)$ such that $\Vert(D-D^\ast, A-A^\ast)\Vert_{H^{s+2}(\Omega; \RR^2)} \leq \eps$. Then, by \Cref{linear-perturbation-analysis} we have
        \begin{align*}
            \Vert {\boldsymbol m} - {\boldsymbol m}^\ast \Vert_{B(L^2(\Omega)\oplus H^1(\Omega), H^1(\Omega; \RR^{2n}))} \leq C\Vert(D-D^\ast, A-A^\ast)\Vert_{H^{s+2}(\Omega; \RR^2)} \leq C\eps
        \end{align*}
        Hence, we have the inverse estimate
        \begin{align*}
            \Vert {\boldsymbol m}^\ast(\dot D, \dot A)\Vert_{H^1(\Omega; \RR^{2n})} &\geq \Vert {\boldsymbol m}(\dot D, \dot A)\Vert_{H^1(\Omega; \RR^{2n})} - C\eps (\Vert\dot D\Vert_{L^2(\Omega)} + \Vert\dot A\Vert_{H^1(\Omega)})\\
            &\geq (1/C - C\eps)(\Vert\dot D\Vert_{L^2(\Omega)} + \Vert\dot A\Vert_{H^1(\Omega)}),
        \end{align*}
        Choosing $\eps$ sufficiently small establishes linear inverse stability at all $\mathcal A\times\mathcal A$ base points. This concludes the proof of \Cref{prop}.
    \end{proof}
    \subsection{Linear Inverse Stability for the Frequency QPAT}\label{section-5-2}
    In this section, we will establish the linear inverse stability for the frequency-averaged QPAT problem using the techniques for the classical model developed in the previous section. Here we assume the base point coefficients satisfy $({\boldsymbol D}, {\boldsymbol A}) \in \mathcal A^I \times \mathcal A^J$.

    \subsubsection{Diagonalization of Parameters}
    Since $\dot D_t(x)$ and $\dot A_t(x)$ are finite linear combinations of spatial parameters $\dot D_i$'s and $\dot A_j$'s, knowing $\dot D_t$ and $\dot A_t$ at sufficiently many distinct $t \in \TT^d$ provides enough information to recover $\dot {\boldsymbol D} = (\dot D_i)_{i=1}^I$ and $\dot {\boldsymbol A} = (\dot A_j)_{j=1}^J$ uniquely. In other words, the linearized inverse problem is well-posed once enough pointwise-in-frequency data are available.
    \par
    Indeed, because the systems of frequency functions $p_i$ and $q_j$ are each linearly independent, there exists a collection of frequencies $\tau = (\tau_1,\ldots, \tau_{I+J})$ with $\tau_1, \ldots, \tau_{I+J} \in \TT^d$ such that the matrices 
    \begin{align*}
        P_{li} := p_i(\tau_l) \; \in \RR^{(I+J)\times I}, \qquad
        Q_{lj} := q_j(\tau_l) \; \in \RR^{(I+J)\times J}
    \end{align*}
    are injective with left inverses, i.e., there exist $P^\dagger \in \RR^{(I \times (I + J))}$ and $Q^\dag \in \RR^{J \times (I+J)}$ such that $P^\dagger P = \operatorname{id}_I$ and $Q^\dagger Q = \operatorname{id}_J$ where $\operatorname{id}_n \in \RR^{n\times n}$ denotes the identity matrix. Then $\dot D_{\tau_l}$ and $\dot A_{\tau_l}$ are given by
    \begin{align*}
        \dot D_{\tau_l}(x) = \dot D(x, \tau_l) = \sum_{i=1}^I P_{li}\dot D_i(x) \;\; \text{and}\;\; \dot A_{\tau_l}(x) = \dot A(x, \tau_l) = \sum_{j=1}^J Q_{lj}\dot A_j(x), \;\; l=1,\dots I+J.
    \end{align*}
    In matrix form, these relations mean 
    \begin{equation} \label{eq:matrixform}
    \dot {\boldsymbol D} = P^\dagger (\dot{D}_{ \tau_l })^{I + J}_{l = 1}, \qquad
    \dot {\boldsymbol A} = Q^\dagger (\dot{A}_{ \tau_l })^{I + J}_{l = 1}.
    \end{equation}
    Now, fix the frequencies $t=\tau_l\in \TT^d$, $l = 1,\dots, I + J$, and assume we are given $2n(I+J)$ boundary data $f_{k,l} \in C(\TT^d; H^{1/2}(\partial\Omega))$ indexed with $k = 1, \dots, 2n$ and $l = 1,\ldots, I+J$. For each $k,l$, let $u_{k,l}(x, t)$ be the solution to \eqref{frequency-QPAT} with the boundary value $f_{k,l}(x, t)$, and let $v_{k,l}(x, t)$ solve \eqref{linearized-frequency-QPAT} with $u = u_{k,l}$.
    Then the linearized measurement operator at frequency $t \in \TT^d$ corresponding to the boundary value $f_{k,l}$ is
    \begin{equation}\label{eq:m_t}
        \begin{aligned}
            m_{k,l}(\dot {\boldsymbol D}, \dot {\boldsymbol A}, t) &:= \dot A_t(x)u_{k,l}(x, t) + A_t(x) v_{k,l}(x, t)\\
            &\phantom{:}= \sum_{j=1}^J q_j(t)\left(\dot A_j(x) u_{k,l}(x, t) + A_j(x)v_{k,l}(x, t)\right).
        \end{aligned}
    \end{equation}
    For each fixed $\tau_l\in \TT^d$, suppose, as in \Cref{prop}, that the $2n$ boundary data $f_{k,l}(x, t)$, $1\leq k\leq 2n$, are chosen so that the system $(m_{k,l}(\dot {\boldsymbol D}, \dot {\boldsymbol A}, \tau_l))_{k=1}^{2n}$ satisfies 
    \begin{equation}\label{pointwise-inverse}
        \Vert(\dot D_{\tau_l}, \dot A_{\tau_l})\Vert_{L^2(\Omega)\oplus H^1(\Omega)} \leq C\Vert (m_{k,l}(\dot {\boldsymbol D}, \dot {\boldsymbol A}, \tau_l))_{k=1}^{2n} \Vert_{H^1(\Omega; \RR^{2n})}.
    \end{equation}
    Repeating this argument for all $1\leq l \leq I + J$ and letting $(m_{k,l}(\;\cdot\;, \;\cdot\;, \tau_l))_{1\leq k\leq 2n, 1\leq l \leq I + J}$ be the system of $2n(I + J)$ pointwise-in-frequency linearized measurements, we have by~\eqref{eq:matrixform} that
    \begin{equation}\label{eq:pointwise-inverse}
        \begin{aligned}
            \Vert(\dot {\boldsymbol D}, \dot {\boldsymbol A})\Vert_{L^2(\Omega; \RR^I) \oplus H^1(\Omega; \RR^J)} &= \Vert P^\dagger (\dot D_{\tau_l})_{l=1}^{I+J}, Q^\dagger (\dot A_{\tau_l})_{l=1}^{I+J}\Vert_{L^2(\Omega; \RR^I) \oplus H^1(\Omega; \RR^J)}\\ &\leq C\sum_{l=1}^{I+J} \Vert(\dot D_{\tau_l}, \dot A_{\tau_l})\Vert_{L^2(\Omega) \oplus H^1(\Omega)}\\
            &\leq C\sum_{l=1}^{I+J}\Vert (m_{k,l}(\dot {\boldsymbol D}, \dot {\boldsymbol A}, \tau_l) )_{k=1}^{2n}\Vert_{H^1(\Omega; \RR^{2n})}
        \end{aligned}
    \end{equation}
    for some constant $C>0$.

    The argument above shows that if \eqref{pointwise-inverse} is satisfied for all $\tau_l$ pointwise, then the pointwise-in-frequency linearized measurements $m_{k,l}(\;\cdot\;, \;\cdot\;, \tau_l)$ defined in \eqref{eq:m_t} recover $(\dot{\boldsymbol D}, \dot{\boldsymbol A})$ uniquely with Lipschitz stability.
    To show inverse stability for the linearized measurements defined in \eqref{eq:measop-linear}, we must analyze the integral average of \eqref{eq:m_t} over $\TT^d$.
    We address these technicalities in the next section. To justify \eqref{pointwise-inverse}, we again choose the boundary values according to the CGO solutions. We estimate the integral of \eqref{eq:m_t} over $\TT^d$ by localizing the boundary data $f_{k,l}$ on a neighborhood of $\tau_l$ for all $k$ and $l$, which we discuss in detail in the next section.
    \begin{remark}
        Note that if the two systems $p_i$ and $q_j$ are not independent, fewer measurements are generally needed to obtain the left inverses $P^\dagger$ and $Q^\dagger$. For instance, if $I=J$ and $p_i = q_i$, then $2nI$ measurements are sufficient.
    \end{remark}
    \subsubsection{Linear Inverse Stability with Averaging in Multiple Frequencies}
    For a fixed frequency $\tau_l\in \TT^d$, we choose the boundary data $f_{k,l}$ of the form
    \begin{equation}\label{frequency-boundary-data}
         f_{k,l}(x, t) := g_{k,l}(x)\eta_\eps(t - \tau_l),
    \end{equation}
    for $1\leq k\leq 2n$, where $g_{k,l}\in H^{s+1/2}(\partial\Omega)$ are the $2n$ CGO boundary data defined for the frequency-independent QPAT problem with the diffusion and attenuation coefficients taken as $(D_{\tau_l}, A_{\tau_l})$, as in \eqref{eq:linearized-system} and \Cref{semi-fredholm}. The estimate \eqref{pointwise-inverse} is automatically satisfied by \Cref{prop}, and $\{\eta_\eps\} \subseteq C(\TT^d)$ is given an approximate identity on $\TT^d$. Specifically, we can let $\eta_\eps$ be defined with, for instance, the periodized heat kernel on $\TT^d$ given by 
    \begin{align*}
        \eta_\eps(t) := C_\eps \sum_{k\in \ZZ^d}\exp(-\vert t-k\vert^2/\eps),
    \end{align*}
    where $C_\eps > 0$ is a normalizing constant such that $\int_{\TT^d} \eta_\eps (t) dt = 1$. Note that $\eta_\eps$ is strictly positive on $\TT^d$. Then for any fixed $t\in \TT^d$, $u_{k,l}(x,t)$ satisfies
    \begin{align*}
        \left\{
        \begin{aligned}
            -\nabla \cdot (D_t(x)\nabla u_{k,l}(x, t)) + A_t(x) u_{k,l}(x, t) &= 0\quad&&\text{in}\quad\Omega,\\
            u_{k,l}(x, t) \vert_{\partial\Omega} &= g_{k,l}(x)\eta_{\eps}(t-\tau_l) \quad&&\text{on}\quad\partial\Omega,
        \end{aligned}
        \right.
    \end{align*}
    and we let the reweighted linearized measurement be defined as
    \begin{equation}\label{eq:tilde m_t}
        \tilde m_{k,l}(\dot {\boldsymbol D}, \dot {\boldsymbol A}, t) := \frac{1}{\eta_\eps(t-\tau_l)} m_{k,l}(\dot {\boldsymbol D}, \dot {\boldsymbol A}, t) = \frac{1}{\eta_\eps(t-\tau_l)}\left(\dot A_t(x) u_{k,l}(x, t) + A_t(x) v_{k,l}(x, t)\right),
    \end{equation}
    where for each fixed $t\in \TT^d$, $v_{k,l}\in H^1_0(\Omega)$ satisfies 
    \begin{align*}
        L_t v_{k,l}(\cdot, t) = -\nabla\cdot (D_t\nabla v_{k,l}) + A_t v_{k,l} = \nabla\cdot (\dot D_t \nabla u_{k,l}) - \dot A_t u_{k,l}.
    \end{align*}
    Then the linearized operator $m_{k, l}$ can be decomposed as
    \begin{equation}\label{eq:m_kl}
        \begin{aligned}
            m_{k, l}(\dot {\boldsymbol D}, \dot {\boldsymbol A}) &= \sum_{j=1}^J \dot A_j(x) \int_{\TT^d} u_{k,l}(x, t)q_j(t)dt + \sum_{j=1}^J A_j(x) \int_{\TT^d} v_{k,l}(x, t) q_j(t)dt\\
            &= \int_{\TT^d} \left(\dot A_t(x) u_{k,l}(x, t) + A_t(x) v_{k,l}(x, t)\right)dt \\
            & = \int_{\TT^d} \eta_\eps(t - \tau_l) \tilde m_{k,l}(\dot {\boldsymbol D}, \dot {\boldsymbol A}, t)dt.
        \end{aligned}
    \end{equation}
    If we can show that the above weighted average of $\tilde m_{k,l}$ provides a lower bound for the single frequency measurement $m_{k,l}(\;\cdot\;,\;\cdot\;, \tau_l)$ defined in \eqref{eq:m_t} at $\tau_l$, then we can claim the inverse stability estimate holds with $2n(I+J)$ measurements. To do so, we establish the following comparison result:
    \begin{lemma}\label{approximate-identity}
        Let $({\boldsymbol D}, {\boldsymbol A}) \in \mathcal A^I \times \mathcal A^J$ with $s > n/2$. For fixed $k$ and $l$, let $m_{k,l}$ and $\tilde m_{k,l}$ be defined by \eqref{eq:m_t} and \eqref{eq:tilde m_t}, respectively, using the boundary value $f_{k,l} \in C(\TT^d; H^{s+1/2}(\partial\Omega))$ from \eqref{frequency-boundary-data}. As $\eps\rightarrow 0$
        \begin{align*}
            \Vert \tilde m_{k,l}(\dot {\boldsymbol D}, \dot {\boldsymbol A}, \tau_l) - m_{k,l}(\dot {\boldsymbol D}, \dot {\boldsymbol A})\Vert_{H^1(\Omega)} \leq o(1) \cdot \left(\Vert \dot {\boldsymbol D}\Vert_{L^2(\Omega; \RR^I)} + \Vert \dot {\boldsymbol A} \Vert_{H^1(\Omega; \RR^J)}\right),
        \end{align*}
        where the $o(1)$-decay depends on the choice of $\eta_\eps$. 
    \end{lemma}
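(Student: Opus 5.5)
The plan is to exploit linearity of the forward problem in the boundary datum. Since $f_{k,l}(x,t)=g_{k,l}(x)\eta_\eps(t-\tau_l)$ and the PDE at each fixed $t$ is linear, we have $u_{k,l}(x,t)=\eta_\eps(t-\tau_l)\,U_{k,l}(x,t)$. Here $U_{k,l}(\cdot,t)$ solves $L_tU=0$ with $U|_{\partial\Omega}=g_{k,l}$, independently of $\eps$. Similarly $v_{k,l}(\cdot,t)=\eta_\eps(t-\tau_l)V_{k,l}(\cdot,t)$, where $V_{k,l}(\cdot,t)=G_t(\nabla\cdot(\dot D_t\nabla U_{k,l})-\dot A_tU_{k,l})$. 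Consequently
\[
\tilde m_{k,l}(\dot{\boldsymbol D},\dot{\boldsymbol A},t)=\dot A_tU_{k,l}(\cdot,t)+A_tV_{k,l}(\cdot,t)=:\mu(t),
\]
which does not depend on $\eps$. By \eqref{eq:m_kl}, the averaged measurement is $m_{k,l}(\dot{\boldsymbol D},\dot{\boldsymbol A})=\int_{\TT^d}\eta_\eps(t-\tau_l)\mu(t)\,dt$. Since $\int\eta_\eps=1$, the difference to be estimated is
\[
\int_{\TT^d}\eta_\eps(t-\tau_l)\bigl(\mu(\tau_l)-\mu(t)\bigr)\,dt .
\]
The proof therefore reduces to a quantitative continuity statement for $t\mapsto\mu(t)$ in operator norm, followed by the standard approximate-identity argument.

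The key step is to show that $\omega(\delta):=\sup_{|t-\tau_l|\le\delta}\|\mu(t)-\mu(\tau_l)\|_{H^1(\Omega)}$ satisfies $\omega(\delta)\le \rho(\delta)\bigl(\|\dot{\boldsymbol D}\|_{L^2}+\|\dot{\boldsymbol A}\|_{H^1}\bigr)$, where $\rho(\delta)\to0$ as $\delta\to0$ and $\rho$ depends only on the background data. I would split $\mu(t)-\mu(\tau_l)$ as in the proof of \Cref{linear-perturbation-analysis}, regarding $(D_t,A_t)$ and $(D_{\tau_l},A_{\tau_l})$ as two admissible coefficient pairs; this is legitimate since $p_i,q_j$ are positive and continuous.

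The pieces are then bounded in turn.
\begin{itemize}
\item The term $(\dot A_t-\dot A_{\tau_l})U_{k,l}(t)$ is bounded by $\max_j|q_j(t)-q_j(\tau_l)|\,\|\dot{\boldsymbol A}\|_{H^1}\sup_t\|U_{k,l}(t)\|_{W^{1,\infty}}$.
\item The term $\dot A_{\tau_l}(U_{k,l}(t)-U_{k,l}(\tau_l))$ is controlled by the $W^{1,\infty}$-stability of \Cref{solution-stability-2} in terms of $\|(D_t,A_t)-(D_{\tau_l},A_{\tau_l})\|_{H^{s+2}}\le C\max_{i,j}(|p_i(t)-p_i(\tau_l)|+|q_j(t)-q_j(\tau_l)|)$.
\item The terms $(A_t-A_{\tau_l})V_{k,l}(t)$ and $A_{\tau_l}(V_{k,l}(t)-V_{k,l}(\tau_l))$ are handled exactly like $I_2,I_3$ there, using \eqref{eq:Gdiff}, \eqref{eq:Lnormdiff}, \Cref{uniform-estimate} and \eqref{v_t-estimate}. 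The forcing difference additionally contains $(\dot D_t-\dot D_{\tau_l})$ and $(\dot A_t-\dot A_{\tau_l})$, which again produce factors $|p_i(t)-p_i(\tau_l)|$ and $|q_j(t)-q_j(\tau_l)|$.
\end{itemize}
All of this yields $\rho(\delta)=C\,\sup_{|t-\tau_l|\le\delta}\max_{i,j}(|p_i(t)-p_i(\tau_l)|+|q_j(t)-q_j(\tau_l)|)$, which tends to $0$ by uniform continuity of $p_i,q_j$ on the compact torus.

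To conclude, split the integral into $|t-\tau_l|\le\delta$ and its complement. This gives
\[
\|\cdot\|_{H^1}\le \rho(\delta)X+2\sup_t\|\mu(t)\|_{H^1}\int_{|t|>\delta}\eta_\eps ,
\]
with $X=\|\dot{\boldsymbol D}\|_{L^2}+\|\dot{\boldsymbol A}\|_{H^1}$. The uniform bound $\sup_t\|\mu(t)\|_{H^1}\le CX$ comes from \Cref{linear-forward-stability} applied with the fixed datum $g_{k,l}$. Choosing, for instance, $\delta=\eps^{1/4}$ makes the Gaussian tail mass of the periodized heat kernel $O(e^{-c\eps^{-1/2}})$, so the total factor is $o(1)$ as $\eps\to0$ and depends only on $\eta_\eps$ and the moduli of continuity of $p_i,q_j$.

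The main obstacle I expect is making sure every constant is uniform in $t$ and linear in $X$. In particular, \Cref{solution-stability-2} must apply uniformly to the family $(D_t,A_t)$, which requires these to lie in a common admissible set with bounds controlled by $\sup|p_i|,\sup|q_j|$ and $\inf p_i,\inf q_j$. If the condition $\varphi\equiv1$ off $\Omega'$ fails for $D_t$ (it equals $\sum_i p_i(t)$ there), I would observe that that normalization plays no role in the elliptic estimates and simply enlarge the constants $c_1,c_2$ accordingly.
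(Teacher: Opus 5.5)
Your proposal is correct and follows essentially the same route as the paper. In both, you rescale by $\eta_\eps$ so that $\tilde m_{k,l}(\cdot,\cdot,t)$ becomes an $\eps$-independent quantity built from solutions with boundary datum $g_{k,l}$, then bound its modulus of continuity in $t$ via the four-term decomposition of Lemma~\ref{linear-perturbation-analysis} (using Lemma~\ref{solution-stability-2}, \eqref{eq:Gdiff} and the continuity of $p_i,q_j$), and finally conclude by the approximate-identity average with $\int_{\TT^d}\eta_\eps=1$. Your explicit near/far splitting of the integral, and your remark that $(D_t,A_t)$ lie only in an enlarged admissible class (since $D_t\equiv\sum_i p_i(t)$ off $\Omega'$), make precise details that the paper leaves implicit.
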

    \begin{proof}
        For simplicity, we write $\tilde u_t(x) := \frac{u_{k,l}(x, t)}{\eta_\eps(t-\tau_l)}$ and $\tilde v_t(x) := \frac{v_{k,l}(x, t)}{\eta_\eps(t-\tau_l)}$, Then $L_t\tilde u_t = 0$ with $\tilde u_t\vert_{\partial\Omega} = g_{k,l}$ and $L_t \tilde v_t = \nabla\cdot(\dot D_t\nabla \tilde u_t) - \dot A_t\tilde u_t$ with $\tilde v_t \in H^1_0(\Omega)$. For $t, s\in \TT^d$, consider the following decomposition of $\tilde m_{k,l}(\dot {\boldsymbol D}, \dot {\boldsymbol A}, t) - \tilde m_{k,l}(\dot {\boldsymbol D}, \dot {\boldsymbol A}, s)$:
        \begin{align*}
            \tilde m_{k,l}(\dot {\boldsymbol D}, \dot {\boldsymbol A}, t) - \tilde m_{k,l}(\dot {\boldsymbol D}, \dot {\boldsymbol A}, s) = (\dot A_t - \dot A_s)\tilde u_t + \dot A_s(\tilde u_t - \tilde u_s) + (A_t - A_s)\tilde v_t + A_s (\tilde v_t - \tilde v_s)
        \end{align*}
        Let $I_1 := \Vert(\dot A_t - \dot A_s)\tilde u_t + \dot A_s(\tilde u_t - \tilde u_s)\Vert_{H^1(\Omega)}$ and $I_2 := \Vert(A_t - A_s)\tilde v_t + A_s (\tilde v_t - \tilde v_s)\Vert_{H^1(\Omega)}$. Since the functions $p_i$ and $q_j$ are continuous on $\TT^d$, let $\omega : \RR_+ \rightarrow \RR_+$ denote a common modulus of continuity; that is, let $\omega$ satisfy $\vert p_i(t) - p_i(s)\vert \leq \omega(\vert t-s\vert)$ and $\vert q_j(t) - q_j(s)\vert \leq \omega(\vert t-s\vert)$ for all $i, j$ and $t,s\in \TT^d$. To bound the first term $I_1$, we use
        \begin{align*}
            I_1 &\leq C\Vert \dot A_t - \dot A_s\Vert_{H^1(\Omega)}\Vert \tilde u_t\Vert_{W^{1,\infty}(\Omega)} + C\Vert\dot A_s\Vert_{H^1(\Omega)}\Vert \tilde u_t - \tilde u_s\Vert_{W^{1,\infty}(\Omega)}\\
            &\leq C\Vert \dot A_t - \dot A_s\Vert_{H^1(\Omega)}\Vert \tilde u_t\Vert_{W^{1,\infty}(\Omega)} + C\Vert\dot A_s\Vert_{H^1(\Omega)}(\Vert D_t - D_s\Vert_{H^{s+2}(\Omega)} + \Vert A_t - A_s\Vert_{H^{s+2}(\Omega)})
        \end{align*}
        where the second line follows from \Cref{solution-stability-2}. Observe that 
        \begin{equation}\label{eq:D_t-continuity}
            \Vert D_t - D_s\Vert_{H^{s+2}(\Omega)} \leq \sum_{i=1}^I \vert p_i(t) - p_i(s)\vert \Vert D_i\Vert_{H^{s+2}(\Omega)} \leq C\omega(\vert t-s\vert)\Vert {\boldsymbol D}\Vert_{H^{s+2}(\Omega; \RR^I)},
        \end{equation}
        where $\Vert {\boldsymbol D}\Vert_{H^{s+2}(\Omega; \RR^I)}$ is bounded by the preconditioning constant $c_2$,
        and estimates on $\dot A_t - \dot A_s$ and $A_t - A_s$ can be done similarly by
        \begin{align*}
            \Vert \dot A_t - \dot A_s\Vert_{H^1(\Omega)} &\leq \sum_{j=1}^J \vert q_j(t) - q_j(s)\vert \Vert \dot A_j\Vert_{H^1(\Omega)} \leq C\omega (\vert t - s\vert) \Vert \dot {\boldsymbol A}\Vert_{H^1(\Omega; \RR^J)},\\
            \Vert A_t - A_s\Vert_{H^{s+2}(\Omega)} &\leq \sum_{j=1}^J \vert q_j(t) - q_j(s)\vert \Vert A_j\Vert_{H^{s+2}(\Omega)} \leq C\omega (\vert t - s\vert) \Vert {\boldsymbol A}\Vert_{H^{s+2}(\Omega; \RR^J)}.
        \end{align*}
        The term $\Vert \tilde u_t \Vert_{W^{1,\infty}(\Omega)}$ can be estimated with \Cref{solution-stability-1}. Hence we have 
        \begin{align*}
            I_1 \leq C\omega(\vert t-s\vert)\Vert \dot {\boldsymbol A}\Vert_{H^1(\Omega; \RR^J)}.
        \end{align*}
        To bound the second term $I_2$, 
        \begin{align*}
            I_2 &\leq C(\Vert A_t - A_s\Vert_{W^{1,\infty}(\Omega)}\Vert\tilde v_t\Vert_{H^1(\Omega)} + \Vert A_s\Vert_{W^{1,\infty}(\Omega)}\Vert \tilde v_t - \tilde v_s\Vert_{H^1(\Omega)})\\
            &\leq C(\Vert A_t - A_s\Vert_{H^{s+2}(\Omega)}\Vert\tilde v_t\Vert_{H^1(\Omega)} + \Vert A_s\Vert_{H^{s+2}(\Omega)}\Vert \tilde v_t - \tilde v_s\Vert_{H^1(\Omega)}),
        \end{align*}
        The first summand can be estimated by the same reasoning as above in \eqref{eq:D_t-continuity}, To bound the second term $\Vert\tilde v_t - \tilde v_s\Vert_{H^1(\Omega)}$
        \begin{align*}
            \tilde v_t - \tilde v_s = G_{D_t, A_t}(\nabla\cdot (\dot D_t \nabla \tilde u_t) - \dot A_t \tilde u_t) - G_{D_s, A_s}(\nabla\cdot (\dot D_s \nabla \tilde u_s) - \dot A_s \tilde u_s),
        \end{align*}
        where an argument similar to that in \Cref{linear-perturbation-analysis} suffices. Let $g_t := \nabla\cdot (\dot D_t \nabla \tilde u_t) - \dot A_t \tilde u_t \in H^{-1}(\Omega)$. Then $\tilde v_t - \tilde v_s = G_{D_t, A_t} g_t - G_{D_s, A_s} g_s$, and we have the estimate
        \begin{align*}
            \Vert \tilde v_t - \tilde v_s\Vert_{H^1(\Omega)} &\leq \Vert (G_{D_t, A_t} - G_{D_s, A_s})g_t \Vert_{H^1(\Omega)} + \Vert G_{D_s, A_s}(g_t - g_s)\Vert_{H^1(\Omega)}\\
            &\leq \Vert G_{D_t, A_t} - G_{D_s, A_s}\Vert_{B(H^{-1}(\Omega), H^1(\Omega))} \Vert g_t \Vert_{H^{-1}(\Omega)}\\
            &\qquad + \Vert G_{D_s, A_s}\Vert_{B(H^{-1}(\Omega), H^1(\Omega))}\Vert g_t - g_s\Vert_{H^{-1}(\Omega)}.
        \end{align*}
        For the comparison of $g_t$ and $g_s$, we have
        \begin{align*}
            \Vert g_t - g_s\Vert_{H^{-1}(\Omega)} &\leq C(\Vert\dot D_t \nabla \tilde u_t - \dot D_s \nabla \tilde u_s\Vert_{L^2(\Omega)} + \Vert \dot A_t\tilde u_t - \dot A_s \tilde u_s\Vert_{L^2(\Omega)})\\
            &\leq C\Vert \tilde u_t\Vert_{W^{1,\infty}(\Omega)}(\Vert \dot D_t - \dot D_s\Vert_{L^2(\Omega)} + \Vert \dot A_t - \dot A_s\Vert_{L^2(\Omega)})\\
            &\qquad + C\Vert \tilde u_t - \tilde u_s\Vert_{W^{1,\infty}(\Omega)}(\Vert \dot D_s\Vert_{L^2(\Omega)} + \Vert \dot A_s\Vert_{L^2(\Omega)})\\
            &\leq C\omega(\vert t-s\vert) (\Vert \dot {\boldsymbol D}\Vert_{L^2(\Omega; \RR^I)} + \Vert \dot {\boldsymbol A}\Vert_{H^1(\Omega; \RR^J)})
        \end{align*}
        where the last line follows from \Cref{solution-stability-2}, the estimate \eqref{eq:D_t-continuity}, and similar estimates on $A_t - A_s$, $\dot A_t - \dot A_s$, and $\dot D_t - \dot D_s$ as in \eqref{eq:D_t-continuity}. The difference $G_{D_t, A_t} - G_{D_s, A_s}$ can be estimated with \eqref{eq:Gdiff}, and uniform estimates on $G_{D_s, A_s}$ and $g_t$ follow from \Cref{uniform-estimate-2}.
        Hence 
        \begin{align*}
            I_2 \leq C\omega(\vert t-s\vert)(\Vert \dot {\boldsymbol D}\Vert_{L^2(\Omega; \RR^I)} + \Vert \dot {\boldsymbol A}\Vert_{H^1(\Omega; \RR^J)}).
        \end{align*}
        Therefore, $\Vert \tilde m_{k,l}(\dot {\boldsymbol D}, \dot {\boldsymbol A}, t) - \tilde m_{k,l}(\dot {\boldsymbol D}, \dot {\boldsymbol A}, s)\Vert_{H^1(\Omega)} \leq C\omega(\vert t-s\vert)(\Vert \dot {\boldsymbol D}\Vert_{L^2(\Omega; \RR^I)} + \Vert \dot {\boldsymbol A}\Vert_{H^1(\Omega; \RR^J)})$, hence using this bound and the relation \eqref{eq:m_kl} we have
        \begin{align*}
            \Vert \tilde m_{k,l}(\dot {\boldsymbol D}, \dot {\boldsymbol A}, \tau_l) - m_{k,l}(\dot {\boldsymbol D}, \dot {\boldsymbol A})\Vert_{H^1(\Omega)} &\leq \int_{\TT^d} \Vert \tilde m_{k,l}(\dot {\boldsymbol D}, \dot {\boldsymbol A}, \tau_l) - \tilde m_{k,l}(\dot {\boldsymbol D}, \dot {\boldsymbol A}, t)\Vert_{H^1(\Omega)} \eta_\eps(t - \tau_l) dt\\
            &\leq C\int_{\TT^d}\omega(\vert t - \tau_l\vert) \left(\Vert \dot {\boldsymbol D}\Vert_{L^2(\Omega; \RR^I)} + \Vert \dot {\boldsymbol A} \Vert_{H^1(\Omega; \RR^J)}\right) \eta_\eps(t-\tau_l)dt\\
            &= o(1) \cdot \left(\Vert \dot {\boldsymbol D}\Vert_{L^2(\Omega; \RR^I)} + \Vert \dot {\boldsymbol A} \Vert_{H^1(\Omega; \RR^J)}\right),
        \end{align*}
        as $\eps \rightarrow 0$ by properties of the approximate identity and the modulus of continuity, i.e., $\omega(\eps) \in o(1)$ as $\eps\rightarrow 0$, which proves the claim.
    \end{proof}
    Let ${\boldsymbol M} := (M_{k,l})_{k,l}$ and ${\boldsymbol m} := (m_{k,l})_{k,l}$ be the systems of measurement operators and linearized measurement operators for $1\leq l\leq I+J$ and $1\leq k\leq 2n$ corresponding to boundary data of the form \eqref{frequency-boundary-data}, where ${\boldsymbol m}$ satisfies the mapping property of
    \begin{align*}
        {\boldsymbol m} := (m_{k,l})_{k,l} : L^2(\Omega'; \RR^I) \oplus H^1_0(\Omega'; \RR^J) \rightarrow H^1(\Omega; \RR^{2n(I+J)})
    \end{align*}
    as shown in \Cref{linear-forward-stability}.
    Now, we are ready to prove the stability for the linearized inverse problem in the frequency QPAT model stated in \Cref{main-3}:
    \begin{proof}[Proof of \Cref{main-3}]
        By \Cref{approximate-identity}, we have the estimate \begin{align*}
            \sum_{l,k} \Vert \tilde m_{k,l}(\dot {\boldsymbol D}, \dot {\boldsymbol A}, \tau_l) - m_{k,l}(\dot {\boldsymbol D}, \dot {\boldsymbol A})\Vert_{H^1(\Omega)} \leq o(1) \cdot \left(\Vert \dot {\boldsymbol D}\Vert_{L^2(\Omega; \RR^I)} + \Vert \dot {\boldsymbol A} \Vert_{H^1(\Omega; \RR^J)}\right).
        \end{align*}
        Then, for some constant $C>0$, as in \eqref{eq:pointwise-inverse}, we have
        \begin{align*}
            \Vert \dot {\boldsymbol D}\Vert_{L^2(\Omega; \RR^I)} + \Vert \dot {\boldsymbol A} \Vert_{H^1(\Omega; \RR^J)} &\leq C\sum_{l=1}^{I+J} (\Vert\dot D_{\tau_l}\Vert_{L^2(\Omega)} + \Vert\dot A_{\tau_l}\Vert_{H^1(\Omega)})\\
            &\leq C\sum_{l=1}^{I+J}\Vert (\tilde m_{k,l}(\dot {\boldsymbol D}, \dot {\boldsymbol A}, \tau_l) )_{k=1}^{2n}\Vert_{H^1(\Omega; \RR^{2n})}\\
            &\leq C\Vert {\boldsymbol m}(\dot {\boldsymbol D}, \dot {\boldsymbol A}) \Vert_{H^1(\Omega; \RR^{2n(I+J)})} + C o(1) \left(\Vert \dot {\boldsymbol D}\Vert_{L^2(\Omega; \RR^I)} + \Vert \dot {\boldsymbol A} \Vert_{H^1(\Omega; \RR^J)}\right).
        \end{align*}
        Choosing $\eps>0$ sufficiently small gives the stability estimate, i.e., there exists a constant $C>0$ such that the system of maps ${\boldsymbol m}$ satisfies the bi-Lipschitz estimate: 
        \begin{align*}
            C^{-1}\Vert {\boldsymbol m}(\dot {\boldsymbol D}, \dot {\boldsymbol A})\Vert_{H^1(\Omega; \RR^{2n(I+J)})} \leq \Vert\dot {\boldsymbol D}\Vert_{L^2(\Omega; \RR^I)} + \Vert\dot {\boldsymbol A}\Vert_{H^1(\Omega; \RR^J)} \leq C\Vert {\boldsymbol m}(\dot {\boldsymbol D}, \dot {\boldsymbol A})\Vert_{H^1(\Omega; \RR^{2n(I+J)})}.
        \end{align*}
        This concludes the proof of \Cref{main-3}.
    \end{proof}
    
    \section{Nonlinear Local Inverse Stability}\label{section-6}

    In this section, we establish the local nonlinear inverse stability result \Cref{main-2} by applying a Banach space nonlinear inverse stability theorem \cite{Stefanov2009} together with the linear analysis developed in the previous sections. 
    We first present the nonlinear stability theorem whose proof is provided in Appendix~\ref{app:banach}:
    \begin{proposition}[{\cite[Theorem 2]{Stefanov2009}}]\label{inverse-function-theorem}
        Let $X'' \subseteq X \subseteq X'$ and $Y'' \subseteq Y' \subseteq Y$ be Banach spaces such that the following interpolation inequalities hold for some constant $C_1 > 0$
        \begin{equation} \label{eq:interpolation}
            \Vert x\Vert_{X}\leq C_1 \Vert x\Vert_{X'}^{\mu_1}\Vert x\Vert_{X''}^{1-\mu_1}\quad\text{and}\quad \Vert y\Vert_{Y'} \leq C_1 \Vert y\Vert_{Y}^{\mu_2}\Vert y\Vert_{Y''}^{1-\mu_2}\quad\text{with}\;\alpha = \mu_1\mu_2 > 1/2,
        \end{equation}
        Given $x_0\in X''$ and an open neighborhood $U\subseteq X''$ of $x_0$, let $F : U\rightarrow Y$ be a function that satisfies the second-order expansion $F(x_2) = F(x_1) + \delta F_{x_1}(x_2-x_1) + R_{x_1}(x_2-x_1)$ for all $x_1, x_2\in U$ with the following uniform estimates for all $x\in U$ and $x + h\in U$
        \begin{equation}\label{forward-regularity}
            \begin{aligned}
                \Vert\delta F_x (h) \Vert_{Y''}&\leq C_2 \Vert h \Vert_{X''} \quad &\text{for some} \; C_2>0
                \;\text{independent of}\;x\in U,\\
                \Vert R_x(h)\Vert_{Y} &\leq C_2 \Vert h\Vert^2_X \quad &\text{for some} \; C_2>0
                \;\text{independent of}\;x\in U,\\
                \Vert h\Vert_{X'} &\leq C_3\Vert\delta F_{x} (h)\Vert_{Y'}\quad &\text{for some} \; C_3>0\;\text{independent of}\;x\in U,
            \end{aligned}
        \end{equation}
        then there exists an open neighborhood $V\subseteq U$ of $x_0$ in $X''$
        such that for all $x, y\in V$, the following H\"older-type local inverse stability holds for $F$ on $V$:
        \begin{equation}
            \norm{y - x}_X\leq C \norm{F(y) - F(x)}_Y^\alpha\quad\text{for some}\quad C>0\quad\text{for all}\quad x,y\in V.
        \end{equation}
    \end{proposition}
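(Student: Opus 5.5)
The plan is the classical Stefanov--Uhlmann argument: combine the second-order expansion, the linear stability, and interpolation, then close with a smallness/absorption step. Fix $x,y\in V$ with $V$ a small ball around $x_0$ in $X''$, and set $h:=y-x$. By the expansion at $x$,
\begin{align*}
\delta F_x(h) = F(y)-F(x) - R_x(h),
\end{align*}
so that
\begin{align*}
\Vert\delta F_x(h)\Vert_Y \le \Vert F(y)-F(x)\Vert_Y + C_2\Vert h\Vert_X^2.
\end{align*}
Since the third estimate in \eqref{forward-regularity} is posed in $Y'$ rather than $Y$, we pass to $Y'$ by the second interpolation inequality, using the forward bound $\Vert\delta F_x(h)\Vert_{Y''}\le C_2\Vert h\Vert_{X''}$ together with the boundedness of $\Vert h\Vert_{X''}$ on $V$ (say $\le 2r$). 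This gives
\begin{align*}
\Vert h\Vert_{X'} \le C_3\Vert\delta F_x(h)\Vert_{Y'} \le C\,\Vert\delta F_x(h)\Vert_Y^{\mu_2}\,\Vert h\Vert_{X''}^{1-\mu_2}.
\end{align*}
Next apply the first interpolation inequality, $\Vert h\Vert_X\le C_1\Vert h\Vert_{X'}^{\mu_1}\Vert h\Vert_{X''}^{1-\mu_1}$, with $\Vert h\Vert_{X''}\le 2r$ treated as a constant. Writing $\alpha=\mu_1\mu_2$, we obtain
\begin{align*}
\Vert h\Vert_X \le C\bigl(\Vert F(y)-F(x)\Vert_Y + C_2\Vert h\Vert_X^2\bigr)^{\alpha}
\le C\Vert F(y)-F(x)\Vert_Y^\alpha + C\Vert h\Vert_X^{2\alpha},
\end{align*}
where the second step uses $(a+b)^\alpha\le a^\alpha+b^\alpha$ for $\alpha\le1$.

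The key observation, and the main obstacle, is absorbing the term $C\Vert h\Vert_X^{2\alpha}$. Because $\alpha>1/2$, we have $2\alpha>1$, so $C\Vert h\Vert_X^{2\alpha} = C\Vert h\Vert_X^{2\alpha-1}\cdot\Vert h\Vert_X$. This is at most $\tfrac12\Vert h\Vert_X$ provided $\Vert h\Vert_X$ is small. I would ensure this smallness through the interpolation chain: either $X''\hookrightarrow X$ continuously, or the first interpolation inequality with $X''\subseteq X'$ gives $\Vert h\Vert_X\le C\Vert h\Vert_{X''}\le 2Cr$. In either case, shrinking the radius $r$ of $V$ makes $C(2Cr)^{2\alpha-1}\le 1/2$. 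Moving the absorbed term to the left yields $\Vert y-x\Vert_X\le 2C\Vert F(y)-F(x)\Vert_Y^\alpha$.

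The remaining care is bookkeeping. All constants must be uniform in $x\in U$, which is exactly what \eqref{forward-regularity} supplies. We must also take $V\subseteq U$ convex, with $x,y$ and $x+h=y$ in $U$, so that the expansion and the estimates apply. If $X''\not\hookrightarrow X$ directly, the continuous inclusions $X''\subseteq X\subseteq X'$ implicit in the hypotheses provide the needed bound.
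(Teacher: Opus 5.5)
Your proposal is correct and follows essentially the paper's own argument: you chain the $X$-interpolation, the linear stability bound, and the $Y$-interpolation combined with the forward $Y''$ estimate; you then use $\delta F_x(h)=F(x+h)-F(x)-R_x(h)$ with subadditivity of $t\mapsto t^\alpha$, and absorb the $\Vert h\Vert_X^{2\alpha}$ term on a small $X''$-ball because $2\alpha>1$. Two minor remarks: your explicit note that making $\Vert h\Vert_X$ small needs a continuous inclusion $X''\hookrightarrow X'$ (or $X''\hookrightarrow X$) states an assumption the paper uses only tacitly when it ``restricts to a small $X''$-neighborhood,'' and convexity of $V$ is unnecessary since the expansion is assumed for every pair of points in $U$.
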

    
    We begin by choosing appropriate Banach spaces on which to pose the nonlinear inverse problem using the preceding linear analysis.
    \subsection{Functional Setup}\label{functional-setup}
    We now choose the appropriate Banach spaces in \Cref{inverse-function-theorem} for our context. Let $s > n/2$
    and make the following choices of spaces $X$, $X'$, $X''$, and $Y$, $Y'$, $Y''$:
    \begin{align*}
        X'' &= \{\varphi \in W^{s+2, \infty}(\Omega; \RR^{I+J}) : \operatorname{supp}\varphi_k \subseteq \overline{\Omega'} \;\text{for all}\; 1\leq k\leq I+J\};\\ 
        X'\phantom{'} &= L^2(\Omega'; \RR^I)\oplus H^1_0(\Omega'; \RR^J);\\ 
        X\phantom{''} &= L^\infty(\Omega'; \RR^{I+J}) \cap X'; \\
        Y'' &= H^{s+1}(\Omega; \RR^{2n(I+J)});\\ 
        Y'\phantom{'} &= H^1(\Omega; \RR^{2n(I+J)});\\ 
        Y\phantom{''} &= L^2(\Omega; \RR^{2n(I+J)}).
    \end{align*}
    The norms on $X'$ and $Y, Y', Y''$ are their canonical norms, while the norm on $X''$ is the $W^{s+2,\infty}(\Omega'; \RR^{I+J})$ norm. The norm on $X$ is defined as follows: for all $\varphi = (\zeta_1,\ldots, \zeta_I, \psi_1, \ldots, \psi_J)\in X$, $\Vert \varphi\Vert_{X}$ is given by
    \begin{align*}
        \Vert \varphi\Vert_X^2 := \Vert\zeta\Vert_{L^\infty(\Omega; \RR^I)}^2 + \Vert\zeta\Vert_{L^2(\Omega; \RR^I)}^2 + \Vert\psi\Vert_{L^\infty(\Omega; \RR^J)}^2 + \Vert\psi\Vert_{H^1(\Omega; \RR^J)}^2 = \Vert\varphi\Vert_{L^\infty(\Omega; \RR^{I+J})}^2 + \Vert\varphi\Vert_{X'}^2.
    \end{align*}
    
    We first verify the interpolation results in \eqref{eq:interpolation}. The spaces $X, X', X''$ describe the regularity of the unknown diffusion and attenuation coefficients.
    To determine the interpolation exponent $\mu_1$ among $X$, $X'$, and $X''$, it suffices to interpolate $L^\infty(\Omega)$ between $L^2(\Omega)$ and $W^{s+2,\infty}(\Omega)$.
    Applying \Cref{sobolev-interpolation-1}, we see that for all $\varphi \in W^{s+2,\infty}(\Omega)$:
    \begin{align*}
        \Vert \varphi\Vert_{L^\infty(\Omega)} \leq C\Vert \varphi\Vert^{\mu_1}_{L^2(\Omega)}\Vert \varphi\Vert_{W^{s+2,\infty}(\Omega)}^{1-\mu_1} \qquad \text{ where } \mu_1 := \tfrac{2s + 4}{n + 2s + 4}.
    \end{align*}
    The spaces $Y, Y', Y''$ will be used to model a system of $2n(I + J)$ interior measurements. By standard results on Sobolev interpolation between $L^2$-type Sobolev spaces (refer to \Cref{sobolev-interpolation-2}), we have for all $\varphi\in H^{s+1}(\Omega)$:
    \begin{align*}
        \Vert\varphi\Vert_{H^1(\Omega)} \leq C\Vert \varphi\Vert_{L^2(\Omega)}^{\mu_2} \Vert \varphi \Vert_{H^{s+1}(\Omega)}^{1-\mu_2}, \qquad \text{ where } \mu_2 := \tfrac{s}{s+1}.
    \end{align*}
    Hence, with these choices, we have
    \begin{align*}
        \Vert x \Vert_X &\leq C\Vert x \Vert_{X'}^{\mu_1}\Vert x\Vert_{X''}^{1-\mu_1}\quad \text{for all}\quad x \in X'';\\
        \Vert y \Vert_{Y'} &\leq C\Vert y\Vert_Y^{\mu_2}\Vert y \Vert_{Y''}^{1-\mu_2}\quad\text{for all}\quad y\in Y'';
    \end{align*}
    where the first interpolation follows from
    \begin{align*}
        \Vert x\Vert_{X}^2 = \Vert x\Vert_{L^\infty(\Omega; \RR^{I+J})}^2 + \Vert x \Vert_{X'}^2 \leq C(\Vert x\Vert_{X'}^{\mu_1}\Vert x\Vert_{X''}^{1-\mu_1})^2 + \Vert x\Vert_{X'}^2 \leq (C+1)(\Vert x\Vert_{X'}^{\mu_1}\Vert x\Vert_{X''}^{1-\mu_1})^2.
    \end{align*}
    Let $\alpha := \mu_1\mu_2$. As in \Cref{inverse-function-theorem}, to have $\alpha > 1/2$ requires us to pick $s > (n/4 - 1/2) + \sqrt{(n/4+1/2)^2 + 2} = n/2 + O(1/n)$. While restricting to integer $s$, for any $\alpha \in (1/2, 1)$, there exists an integer $s > n/2$ such that $\mu_1\mu_2 \geq \alpha$. 
    Indeed, after some algebra, we see that $s/n$ satisfies the asymptotic $s/n = \beta/(2(1-\beta)) + O(1/n)$ with $\beta = \mu_1\mu_2$. Then $\alpha$-H\"older stability can be established for all $\alpha > 1/2$ with integer $s$ in \Cref{inverse-function-theorem}.
    \subsection{Inverse Stability via Banach Inverse Theorem}
    In this section, we verify the rest of the conditions in \Cref{inverse-function-theorem} and conclude the proof of \Cref{main-2}. 
    \begin{proof}[Proof of~\Cref{main-2}]

        For the frequency-averaged QPAT problem \eqref{frequency-QPAT}, apply \Cref{inverse-function-theorem} with $F$ given by ${\boldsymbol M}({\boldsymbol D}, {\boldsymbol A}) = {\boldsymbol{\mathfrak m}}$, where ${\boldsymbol M} = (M_{k,l})_{1\leq k\leq 2n, 1\leq l \leq I + J}$ is the system of measurement operators.  The neighborhood $U$ corresponds to the preconditioned set $\mathcal A_\infty^I \times \mathcal A_\infty^J$. We now verify that the hypotheses of \Cref{inverse-function-theorem} are satisfied: 
        \begin{enumerate}[leftmargin=*, label = (\alph*)]
            \item Since the diffusion and attenuation coefficients are preconditioned to be one on $\Omega - \Omega'$, we take the neighborhood $U = \mathcal A_\infty^I \times \mathcal A_\infty^J$ and let $\tilde U$ be the affine shift
            \begin{align*}
                \tilde U = \{x \in X'': x_k + 1 \in \mathcal A_\infty\;\text{for all}\; 1\leq k \leq I + J\},
            \end{align*}
            We introduce a shift by one since $D_i$ and $A_j$ are a priori assumed to be constant one near the boundary of $\Omega$.
            Since the pointwise evaluation map is continuous under the $X''$ norm, $\tilde U$ is open in $X''$ by the definition of $\mathcal A_\infty$. And $F$ here is given by
            \begin{align*}
                F : \tilde U \rightarrow Y,\; x\mapsto {\boldsymbol M}(\boldsymbol D, \boldsymbol A),\;\text{where}\; D_i = x_i + 1, A_j = x_{I+j} + 1\;\text{for all}\; 1 \leq i \leq I, 1\leq j\leq J.
            \end{align*}
            \item We now check the conditions in \eqref{forward-regularity}. The first condition is given by \Cref{linear-forward-estimate}:
            \begin{align*}
                \Vert\delta F (h) \Vert_{Y''} = \Vert {\boldsymbol m}(\delta {\boldsymbol D}, \delta {\boldsymbol A}) \Vert_{H^{s+1}(\Omega; \RR^{2n(I+J)})} \leq C \Vert (\delta {\boldsymbol D}, \delta {\boldsymbol A})\Vert_{W^{s+2,\infty}(\Omega; \RR^{I+J})} = C \Vert h \Vert_{X''}.
            \end{align*}
            The constant can be bounded uniformly for all base points in the admissible set, which proves the first required estimate.
            \item For the second estimate in \eqref{forward-regularity}, we have
            \begin{align*}
                \Vert R(h)\Vert_{Y} = \Vert R(\delta {\boldsymbol D}, \delta {\boldsymbol A})\Vert_{L^2(\Omega; \RR^{2n(I+J)})} \leq C\Vert(\delta {\boldsymbol D}, \delta {\boldsymbol A})\Vert_{L^\infty(\Omega; \RR^{I+J})}^2 = C\Vert h\Vert^2_X,
            \end{align*}
            which is given by \Cref{analyticity} and holds after possibly restricting the open neighborhood $U$ with the $L^\infty(\Omega; \RR^{I+J})$ norm (i.e., in the topology of $X$). Likewise, the constant $C>0$ can be chosen uniformly for all base points in the admissible set, see \Cref{analyticity}.
            \item 
            To show the last inequality in \eqref{forward-regularity}, we have
            \begin{align*}
                \Vert h\Vert_{X'} = \Vert(\delta {\boldsymbol D}, \delta {\boldsymbol A})\Vert_{L^2(\Omega; \RR^I) \oplus H^1(\Omega; \RR^J)} \leq C \Vert {\boldsymbol m}(\delta {\boldsymbol D}, \delta {\boldsymbol A})\Vert_{H^1(\Omega; \RR^{2n(I+J)})} = C\Vert\delta F (h)\Vert_{Y'},
            \end{align*}
            This follows from \Cref{main-3}; by \Cref{perturbation-remark}, the constant $C > 0$ can be chosen uniformly on $\tilde U$.
            \item
            In summary, we have shown that, given any $\alpha > 1/2$, there exists $s > n/2$ such that the interpolation
            \begin{align*}
                \Vert x \Vert_X &\leq C\Vert x \Vert_{X'}^{\mu_1}\Vert x\Vert_{X''}^{1-\mu_1}\quad \text{for all}\quad x \in X'',\\
                \Vert y \Vert_{Y'} &\leq C\Vert y\Vert_Y^{\mu_2}\Vert y \Vert_{Y''}^{1-\mu_2}\quad\text{for all}\quad y\in Y''
            \end{align*}
            holds with some $\mu_1$ and $\mu_2$ such that $\mu_1\mu_2 = \alpha > 1/2$. Given any $({\boldsymbol D}, {\boldsymbol A})\in \mathcal A_\infty^I\times \mathcal A_\infty^J$, there exists an open subset $\tilde U$ in $X''$, given by an affine shift of an open neighborhood $U$ around $({\boldsymbol D}, {\boldsymbol A})$, on which the estimates \eqref{forward-regularity} hold:
            \begin{equation*}
                \Vert\delta F_x (h) \Vert_{Y''}\leq C \Vert h \Vert_{X''},\quad
                \Vert R_x(h)\Vert_{Y} \leq C \Vert h\Vert^2_X,\quad
                \Vert h\Vert_{X'} \leq C\Vert\delta F_{x} (h)\Vert_{Y'},
            \end{equation*}
            for some constant $C>0$ independent of $x \in \tilde U$.
            \item 
            Now applying \Cref{inverse-function-theorem}, we have the nonlinear local inverse stability result:
            \par
            There are $2n(I+J)$ measurements determined by $C(\TT^d; H^{s+1/2}(\partial\Omega))$ boundary data $f_{1,1}, \ldots, f_{2n, I+J}$ such that the following H\"older-type inverse stability estimate holds:
            \begin{align*}
                \Vert({\boldsymbol D}_1, {\boldsymbol A}_1) - ({\boldsymbol D}_2, {\boldsymbol A}_2)\Vert_{L^\infty(\Omega; \RR^{I+J})} \leq C_\alpha \Vert {\boldsymbol M}({\boldsymbol D}_1, {\boldsymbol A}_1)  - {\boldsymbol M}({\boldsymbol D}_2, {\boldsymbol A}_2)\Vert_{{L}^2(\Omega; \RR^{2n(I+J)})}^\alpha,
            \end{align*}
            for all $({\boldsymbol D}_1, {\boldsymbol A}_1), ({\boldsymbol D}_2, {\boldsymbol A}_2)\in U$, where ${\boldsymbol M}:= (M_1, M_2,\ldots, M_{2n(I+J)})$ is the system of measurement operators defined in \eqref{eq:measop} for the $2n(I+J)$ boundary data $f_{k,l}(x, t)$.
        \end{enumerate}
        This concludes the proof of \Cref{main-2}.
    \end{proof}

\section*{Acknowledgments}
The research of Yang Yang is partially supported by the National Science Foundation (NSF) grants DMS-2237534 and DMS-2220373. Yunan Yang and Yunhao Sun are partially supported by NSF grants DMS-2409855 and DMS-2540324, and by the Office of Naval Research under Award No.~N00014-24-1-2088.

    \appendix
    \section{Appendix}
    \subsection{Elliptic Regularity Estimates}
    In this section, we collect some essential properties of the elliptic operator
    \begin{align*}
        Lu = -\nabla\cdot(D(x)\nabla u(x)) + A(x)u(x),
    \end{align*}
    where $D, A \in L^\infty(\Omega)$ are uniformly positive and bounded on $\Omega$. 
    Recall that the solution of the boundary value problem
    $Lu=0$, $u|_{\partial\Omega}=f$ admits the representation $u = Tf - GLTf$ where $T$ is the harmonic extension and $G$ the inverse of $L$ equipped with the homogeneous Dirichlet boundary condition, see~\eqref{eq:utrep}.
    \begin{lemma}\label{uniform-estimate-2}
        Suppose that $D, A \in L^\infty(\Omega)$ satisfy $k_1 < D, A < k_2$ and that $f\in H^{1/2}(\partial \Omega)$ satisfies $\Vert f\Vert_{H^{1/2}(\partial\Omega)} < k_3$ for some positive constants $k_1, k_2, k_3 > 0$. Then
        \begin{itemize}[leftmargin = *]
            \item the operator norm $\Vert L\Vert_{B(H^1(\Omega), H^{-1}(\Omega))}$ is bounded by $k_2$;
            \item if $G :H^{-1}(\Omega) \rightarrow H^1_0(\Omega)$ is the inverse of $L$, then the operator norm $\Vert G\Vert_{B(H^{-1}(\Omega), H^1(\Omega))}$ is bounded by $1/k_1$;
            \item if $u$ is the solution to $Lu = 0$, $u\vert_{\partial\Omega} = f$, then $\Vert u\Vert_{H^1(\Omega)}$ can be bounded by a constant $C>0$ depending only on $k_1, k_2$, and $k_3$.
        \end{itemize}
    \end{lemma}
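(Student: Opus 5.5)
The three bullets follow from the weak formulation of $L$ together with the Lax--Milgram lemma, applied in that order. Throughout, $H^{-1}(\Omega)$ is the dual of $H^1_0(\Omega)$. The operator $L$ is understood through the bilinear form
\begin{align*}
    a(\varphi_1,\varphi_2) := \int_\Omega D\nabla\varphi_1\cdot\nabla\varphi_2 + A\varphi_1\varphi_2\,dx, \qquad \langle L\varphi_2,\varphi_1\rangle = a(\varphi_1,\varphi_2).
\end{align*}

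For the first bullet, take $\varphi_2\in H^1(\Omega)$ and $\varphi_1\in H^1_0(\Omega)$. By Cauchy--Schwarz,
\begin{align*}
    |a(\varphi_1,\varphi_2)| \le k_2\left(\Vert\nabla\varphi_1\Vert_{L^2}\Vert\nabla\varphi_2\Vert_{L^2} + \Vert\varphi_1\Vert_{L^2}\Vert\varphi_2\Vert_{L^2}\right) \le k_2\Vert\varphi_1\Vert_{H^1}\Vert\varphi_2\Vert_{H^1},
\end{align*}
where the last step is the discrete Cauchy--Schwarz inequality applied to the pairs of $L^2$ norms. Taking the supremum over $\Vert\varphi_1\Vert_{H^1}\le 1$ gives $\Vert L\Vert_{B(H^1,H^{-1})}\le k_2$.

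For the second bullet, restrict $a$ to $H^1_0(\Omega)$. Since $D,A>k_1$,
\begin{align*}
    a(\varphi,\varphi)\ge k_1\left(\Vert\nabla\varphi\Vert_{L^2}^2+\Vert\varphi\Vert_{L^2}^2\right) = k_1\Vert\varphi\Vert_{H^1}^2,
\end{align*}
so $a$ is coercive with constant $k_1$. The zeroth-order term $A$ gives coercivity in the full $H^1$ norm directly, so no Poincaré inequality is needed. Lax--Milgram then gives, for each $g\in H^{-1}(\Omega)$, a unique $w=Gg\in H^1_0(\Omega)$ with $a(\varphi,w)=\langle g,\varphi\rangle$ for all $\varphi\in H^1_0(\Omega)$. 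Testing with $\varphi=w$ yields $k_1\Vert w\Vert_{H^1}^2\le\Vert g\Vert_{H^{-1}}\Vert w\Vert_{H^1}$, hence $\Vert G\Vert\le 1/k_1$.

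For the third bullet, use the representation $u=Tf-GLTf$ from \eqref{eq:utrep}. The first two bullets give
\begin{align*}
    \Vert u\Vert_{H^1}\le\left(1+\tfrac{k_2}{k_1}\right)\Vert T\Vert\,\Vert f\Vert_{H^{1/2}}\le\left(1+\tfrac{k_2}{k_1}\right)\Vert T\Vert\,k_3.
\end{align*}
Here $\Vert T\Vert_{B(H^{1/2}(\partial\Omega),H^1(\Omega))}$ depends only on $\Omega$, via the trace theorem and its bounded right inverse together with the Dirichlet problem for $-\Delta$. It is therefore a fixed background constant, in line with the paper's convention that $C$ may depend on $\Omega$.

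This argument also confirms that $u$ is a solution. The difference $u-Tf$ lies in $H^1_0(\Omega)$, so $u$ has trace $f$, and $Lu = LTf - LGLTf = 0$ because $LG=\mathrm{id}$ on $H^{-1}(\Omega)$. The only step needing any care is keeping the duality pairing between $H^1_0$ and $H^{-1}$ consistent, so that $L$ acting on $H^1(\Omega)$ lands in $H^{-1}(\Omega)$. The rest is routine.
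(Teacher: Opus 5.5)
Your proposal is correct and follows the paper's proof essentially step for step: the Cauchy--Schwarz bound on the bilinear form gives $\Vert L\Vert\le k_2$, coercivity with constant $k_1$ tested against $v=Gg$ gives $\Vert G\Vert\le 1/k_1$, and the representation $u=Tf-GLTf$ gives $\Vert u\Vert_{H^1(\Omega)}\le(1+k_2/k_1)\Vert T\Vert\,k_3$. Your observation that $\Vert T\Vert$ depends on $\Omega$, so the constant in the third bullet also depends on the domain and not only on $k_1,k_2,k_3$, is a fair clarification that the paper leaves implicit.
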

    \begin{proof}
        Well-posedness of the solution and the solution operator follows from standard results \cite{Gilbarg2001}, so we only need to show the dependence on the constants.
        To show the bound on $L$, let $\varphi_1\in H^1_0(\Omega)$ and $\varphi_2\in H^1(\Omega)$ be arbitrary. Then
        \begin{align*}
            \int_\Omega \varphi_1 L\varphi_2 dx = \int_\Omega D\nabla\varphi_1 \cdot \nabla \varphi_2 + A\varphi_1\varphi_2 \leq k_2\Vert \varphi_1\Vert_{H^1(\Omega)}\Vert\varphi_2\Vert_{H^1(\Omega)},
        \end{align*}
        hence the operator norm of $L$ is bounded by $k_2$.
        \par
        To show the second claim, given $g\in H^{-1}(\Omega)$, let $v = Gg \in H^1_0(\Omega)$. Then
        \begin{align*}
            \Vert v\Vert_{H^1(\Omega)}^2  \leq \frac{1}{k_1}\int_\Omega D \vert\nabla v\vert^2 + Av^2 dx \leq \frac{1}{k_1}\int_\Omega v Lv dx \leq \frac{1}{k_1}\Vert g\Vert_{H^{-1}(\Omega)}\Vert v\Vert_{H^1(\Omega)},
        \end{align*}
        where the second integral is interpreted in the sense of distributions. This shows the second claim.
        \par
        To show the last claim, let $T : H^{1/2}(\partial\Omega)\rightarrow H^1(\Omega)$ be the harmonic extension operator. Then the solution to $Lu = 0$, $u\vert_{\partial\Omega} = f$ is given by $u = Tf - GLTf$. Hence
        \begin{align*}
            \Vert u\Vert_{H^1(\Omega)} &\leq \Vert Tf\Vert_{H^1(\Omega)} + \Vert GLTf\Vert_{H^1(\Omega)} \leq (1 + \Vert G\Vert_{B(H^{-1}(\Omega), H^1(\Omega))}\Vert L\Vert_{B(H^1(\Omega), H^{-1}(\Omega))})\Vert Tf\Vert_{H^1(\Omega)}\\
            &\leq \left(1 + k_2/k_1\right)\Vert T\Vert_{B(H^{1/2}(\partial \Omega), H^1(\Omega))} \|f\|_{H^{1/2}(\partial\Omega)} \\
            & \leq \left(1 + k_2/k_1\right)\Vert T\Vert_{B(H^{1/2}(\partial \Omega), H^1(\Omega))}k_3
        \end{align*}
        which proves the claim.
    \end{proof}

    Lemma~\ref{uniform-estimate-2} and its proof imply the following fact for the $t$-dependent operator $L_t$.
    
    \begin{corollary}\label{uniform-estimate}
        Consider the $t$-dependent boundary value problem in \eqref{frequency-QPAT} with $({\boldsymbol D}, {\boldsymbol A}) \in L^\infty(\Omega; \RR^{I+J})$. Suppose $k_1 < D_i, A_j < k_2$ for all $i, j$ for some positive constants $k_1, k_2$. Then the operator norms 
        $\Vert L_t\Vert_{B(H^1(\Omega), H^{-1}(\Omega))}$, $\Vert G_t\Vert_{B(H^{-1}(\Omega), H^1(\Omega))}$ are uniformly bounded for all $t\in \TT^d$ with a constant depending only on $k_1$ and $k_2$. Given a fixed boundary value $f\in C(\TT^d; H^{1/2}(\partial\Omega))$, we have
        \begin{align*}
            \sup\nolimits_{t\in \TT^d}\Vert u_t\Vert_{H^1(\Omega)} \leq C\sup\nolimits_{t\in \TT^d}\Vert f_t\Vert_{H^{1/2}(\partial\Omega)} = C \Vert f \Vert_{C(\TT^d; H^{1/2}(\partial\Omega))}
        \end{align*}
        where the constant $C=\left(1 + k_2/k_1\right)\Vert T\Vert_{B(H^{1/2}(\partial \Omega), H^1(\Omega))}$.
        In particular, the norm $\Vert u_t\Vert_{H^1(\Omega)}$ is bounded for all $t\in \TT^d$ with a constant that depends on $k_1, k_2, f$.
    \end{corollary}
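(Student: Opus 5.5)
The plan is to deduce the corollary directly from Lemma~\ref{uniform-estimate-2}, applied separately at each frozen frequency $t\in\TT^d$. The only thing to check is that the hypotheses of that lemma hold with constants independent of $t$.

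\textbf{Step 1: uniform bounds on the coefficients.} By the decomposition \eqref{DAdecomposition}, $D_t=\sum_{i=1}^I p_i(t)D_i$ and $A_t=\sum_{j=1}^J q_j(t)A_j$. The weights $p_i,q_j$ are continuous on the compact torus and bounded below by a positive constant, so they take values in some interval $[\underline p,\overline p]\subset(0,\infty)$, with these bounds depending only on the fixed background data. Combined with $k_1<D_i,A_j<k_2$, this gives
\[
I\underline p\,k_1 \le D_t \le I\overline p\,k_2, \qquad J\underline p\,k_1 \le A_t \le J\overline p\,k_2 .
\]
Hence there are constants $\tilde k_1,\tilde k_2>0$, depending only on $k_1,k_2$ and the weights, with $\tilde k_1<D_t,A_t<\tilde k_2$ for every $t$. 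Following the paper's convention that generic constants may depend on $p_i,q_j$, I relabel these as $k_1,k_2$.

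\textbf{Step 2: uniform operator bounds.} Apply the first two items of Lemma~\ref{uniform-estimate-2} to $(D_t,A_t)$ for each fixed $t$. This gives $\Vert L_t\Vert\le\tilde k_2$ and $\Vert G_t\Vert\le 1/\tilde k_1$, uniformly in $t$.

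\textbf{Step 3: uniform solution bound.} For the solution, rerun the third item's computation using the representation \eqref{eq:utrep}, $u_t=Tf_t-G_tL_tTf_t$. This yields
\[
\Vert u_t\Vert_{H^1(\Omega)}\le (1+\tilde k_2/\tilde k_1)\,\Vert T\Vert\,\Vert f_t\Vert_{H^{1/2}(\partial\Omega)}.
\]
Take the supremum over $t$. Since $f\in C(\TT^d;H^{1/2}(\partial\Omega))$ and the torus is compact, $\sup_t\Vert f_t\Vert_{H^{1/2}(\partial\Omega)}$ is finite and equals $\Vert f\Vert_{C(\TT^d;H^{1/2}(\partial\Omega))}$, which gives the stated estimate. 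The final sentence of the corollary follows by setting $k_3=\sup_t\Vert f_t\Vert_{H^{1/2}(\partial\Omega)}+1$.

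\textbf{Main point requiring care.} The only delicate step is Step 1: $t$-uniform ellipticity. It relies on two properties of the weights, positivity bounded away from zero and continuity on a compact set. Both are hypotheses stated in the Preliminaries. Everything after that is a direct invocation of the lemma.
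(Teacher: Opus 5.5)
Your proposal is correct and is essentially the paper's argument, which simply invokes Lemma~\ref{uniform-estimate-2} and its proof at each frozen $t$, via the representation $u_t=Tf_t-G_tL_tTf_t$. You also make explicit the $t$-uniform ellipticity of $D_t,A_t$ that follows from continuity and positivity of the weights, which the paper leaves implicit. Your relabeling is genuinely needed, not cosmetic: the argument delivers $C=(1+\tilde k_2/\tilde k_1)\Vert T\Vert_{B(H^{1/2}(\partial\Omega),H^1(\Omega))}$, where $\tilde k_1,\tilde k_2$ depend on $I$, $J$ and the range of the $p_i,q_j$, so the explicit formula $(1+k_2/k_1)\Vert T\Vert$ in the statement must be read with $k_1,k_2$ as $t$-uniform bounds on $D_t,A_t$ (with bounds only on the individual $D_i,A_j$ it can fail, e.g.\ if some $q_j$ takes very large values, since $u_t$ then develops a boundary layer with large $H^1$ norm).
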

    To obtain sharper stability estimates for the linearized QPAT map, we also establish $L^\infty$-type bounds for solutions $u$ to the Dirichlet problem $Lu = 0$, $u\vert_{\partial\Omega}=f$
    when $D,A \in H^{s+2}(\Omega)$ with $s>n/2$. In particular, higher regularity of $D$ and $A$ yields improved control on $u$, which we summarize in the following estimates.
    \begin{lemma}\label{solution-stability-1}
        Let $(D, A) \in \mathcal A\times\mathcal A$ be a pair of admissible coefficients, and let $f\in H^{s+1/2}(\partial\Omega)$ be the boundary condition. Then the solution $u$ is in $W^{1,\infty}(\Omega)$ and satisfies
        \begin{align*}
            \Vert u\Vert_{W^{1,\infty}(\Omega)} \leq C\Vert f \Vert_{H^{s + 1/2}(\partial\Omega)},
        \end{align*}
        where $C>0$ can be chosen uniformly for all $(D, A)\in \mathcal A\times\mathcal A$.
    \end{lemma}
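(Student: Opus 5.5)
The plan is to obtain the $W^{1,\infty}$ bound from higher-order Sobolev regularity combined with Sobolev embedding. Since $s>n/2$, we have $H^{s+1}(\Omega)\hookrightarrow W^{1,\infty}(\Omega)$, so it suffices to prove
\begin{align*}
\Vert u\Vert_{H^{s+1}(\Omega)}\le C\Vert f\Vert_{H^{s+1/2}(\partial\Omega)},
\end{align*}
with $C$ uniform over $\mathcal A\times\mathcal A$.

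\textbf{Step 1: reduce to homogeneous boundary data.} Let $Tf\in H^{s+1}(\Omega)$ denote an extension of $f$. One can take the harmonic extension, which is bounded $H^{s+1/2}(\partial\Omega)\to H^{s+1}(\Omega)$ by elliptic regularity of the Laplacian on a smooth domain. Then $w:=u-Tf\in H^1_0(\Omega)$ solves
\begin{align*}
Lw=-LTf=\nabla D\cdot\nabla Tf+D\Delta Tf-A\,Tf.
\end{align*}
Because $s>n/2$, $H^{s+2}(\Omega)$ and $H^{s+1}(\Omega)$ are Banach algebras, and multiplication by an $H^{s+1}$ function is bounded on $H^{s-1}$. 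Hence the right-hand side lies in $H^{s-1}(\Omega)$, with norm at most $C(c_2)\Vert f\Vert_{H^{s+1/2}(\partial\Omega)}$.

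\textbf{Step 2: elliptic regularity for $w$ with uniform constants.} Write $Lw=g$ in nondivergence form:
\begin{align*}
-D\Delta w=g+\nabla D\cdot\nabla w-Aw,\qquad\text{i.e.}\qquad -\Delta w=D^{-1}\bigl(g+\nabla D\cdot\nabla w-Aw\bigr).
\end{align*}
Since $D>c_1$ and $D\in H^{s+2}$, the reciprocal $D^{-1}$ lies in $H^{s+2}$ with norm controlled by $c_1,c_2$, by composition estimates for smooth functions. Now bootstrap. The starting point is $\Vert w\Vert_{H^1}\le C\Vert g\Vert_{H^{-1}}$, by Lemma \ref{uniform-estimate-2}. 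Suppose $w\in H^{r}$ with $1\le r\le s$. Then the right-hand side lies in $H^{r-1}$, because $\nabla D\in H^{s+1}\subset W^{1,\infty}\cap H^{s+1}$ and multiplication by $H^{s+1}$ is bounded on $H^{r-1}$ for $0\le r-1\le s$. Regularity of the Dirichlet Laplacian then gives $w\in H^{r+1}$, with
\begin{align*}
\Vert w\Vert_{H^{r+1}}\le C\bigl(\Vert g\Vert_{H^{r-1}}+\Vert w\Vert_{H^r}\bigr).
\end{align*}
Non-integer $s$ is handled by interpolation, or by taking fractional steps. After finitely many steps we reach $\Vert w\Vert_{H^{s+1}}\le C\Vert g\Vert_{H^{s-1}}$. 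At each step, $C$ depends only on $\Omega$, $s$, $c_1$, $c_2$.

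\textbf{Step 3: conclude.} Combining Steps 1 and 2 gives $\Vert u\Vert_{H^{s+1}}\le C\Vert f\Vert_{H^{s+1/2}}$, and Sobolev embedding yields the claim.

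The main obstacle is making the uniformity in Step 2 transparent. Two ingredients carry it. The multiplier estimates must depend on the coefficients only through $\Vert D\Vert_{H^{s+2}}$, $\Vert A\Vert_{H^{s+2}}$ and the lower bound $c_1$; this is exactly what the admissible set $\mathcal A$ provides. The base case must be the uniform $H^1$ bound from Lemma \ref{uniform-estimate-2}, which depends only on the $L^\infty$ bounds. If the fractional bootstrap proves awkward, an alternative is a compactness/contradiction argument over the set $\mathcal A\times\mathcal A$, which is bounded in $H^{s+2}$ and hence precompact in $H^{s+1}$. Uniqueness for the Dirichlet problem would then supply the contradiction.
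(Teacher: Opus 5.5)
Your argument is correct, but it takes a different route from the paper's proof. The paper never passes through $H^{s+1}$. It notes that $Tf\in H^{s+1}(\Omega)\hookrightarrow W^{2,p}(\Omega)$ for some $p>n$ (this is where $s>n/2$ enters). It then applies the Calder\'on--Zygmund estimate of Theorem~\ref{GT9.15} once, to $Lv=-LTf$ with $v|_{\partial\Omega}=0$. Finally it concludes with Morrey's embedding $W^{2,p}\hookrightarrow W^{1,\infty}$.

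The paper's route is shorter and needs no fractional bootstrap and no composition estimate for $D^{-1}$. In the elliptic step it only uses that $D$ is continuous and that $\nabla D$ and $A$ are bounded, so it would survive with much weaker coefficient regularity. The price is that uniformity over $\mathcal A\times\mathcal A$ depends on the constant in the $W^{2,p}$ estimate. That constant involves the modulus of continuity of $D$, not just the ellipticity and $L^\infty$ bounds. The $H^{s+2}$ bound does control this modulus uniformly, but the paper's accompanying remark asserts this rather than tracking it.

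Your $L^2$-Sobolev bootstrap uses the full $H^{s+2}$ regularity of the coefficients, through $\nabla D$ and $D^{-1}$ as multipliers on $H^{r-1}$. In return, the constants depend explicitly only on $\Omega$, $s$, $c_1$ and $c_2$. It also gives the stronger bound $\|u\|_{H^{s+1}(\Omega)}\le C\|f\|_{H^{s+1/2}(\partial\Omega)}$. This is essentially Lemma~\ref{Schauder-estimate}, but under $H^{s+2}$ rather than $W^{s+2,\infty}$ coefficients and for non-integer $s$. The compactness fallback you mention is not needed.
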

    To show the above claim, we first recall the following known result:
    \begin{theorem}[{\cite[Theorem 9.15, Lemma 9.17]{Gilbarg2001}}]\label{GT9.15}
        Let $\Omega$ be a $C^{1,1}$ domain in $\RR^n$, and let the operator $L$ be strictly elliptic in $\Omega$ with $C(\overline\Omega)$ coefficients. Then if $g\in L^p(\Omega)$ with $1 < p < \infty$, the Dirichlet problem $Lu = g$, $u\vert_{\partial \Omega} = 0$ has a unique solution $u \in W^{2,p}(\Omega) \cap W^{1,p}_0(\Omega)$ satisfying
        \begin{align*}
            \Vert u \Vert_{W^{2,p}(\Omega)} \leq C\Vert Lu \Vert_{L^p(\Omega)}
        \end{align*}
        for some constant $C>0$.
    \end{theorem}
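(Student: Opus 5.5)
We would follow the classical Calder\'on--Zygmund route of \cite[Chapter 9]{Gilbarg2001}. In our setting $L$ is in divergence form, but since $D\in H^{s+2}(\Omega)\subset C^1(\overline\Omega)$ for $s>n/2$, we can rewrite it as
\[
Lu=-D\Delta u-\nabla D\cdot\nabla u+Au .
\]
This is a nondivergence operator with continuous leading coefficient $D\ge c_1>0$, bounded first-order coefficients, and zeroth-order coefficient $A>0$. That places us within the hypotheses of the cited theorem. The proof splits into an a priori estimate with a lower-order term, removal of that term, and existence.

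\textbf{Step 1 (a priori estimate with a lower-order term).} We first prove that every $u\in W^{2,p}(\Omega)\cap W^{1,p}_0(\Omega)$ satisfies
\[
\Vert u\Vert_{W^{2,p}(\Omega)}\le C\bigl(\Vert Lu\Vert_{L^p(\Omega)}+\Vert u\Vert_{L^p(\Omega)}\bigr).
\]
For the constant-coefficient operator $-a\Delta$, the bound $\Vert D^2w\Vert_{L^p}\le C\Vert\Delta w\Vert_{L^p}$ follows from the $L^p$ boundedness of the Riesz transforms, i.e. Calder\'on--Zygmund theory. We use this both for compactly supported $w$ and, via odd reflection, on a half-ball for $w$ vanishing on the flat boundary. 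We then freeze coefficients. On a ball of radius $\delta$, the uniform continuity of $D$ bounds the oscillation of the leading coefficient by $\omega(\delta)$, so the error term $\omega(\delta)\Vert D^2u\Vert_{L^p}$ can be absorbed into the left-hand side. Near $\partial\Omega$ we flatten the boundary with $C^{1,1}$ charts; this is the place where the $C^{1,1}$ regularity of $\partial\Omega$ is used. We then glue the local estimates with a finite partition of unity. The resulting lower-order terms $\Vert u\Vert_{W^{1,p}}$ are absorbed using the interpolation inequality $\Vert\nabla u\Vert_{L^p}\le\epsilon\Vert D^2u\Vert_{L^p}+C_\epsilon\Vert u\Vert_{L^p}$.

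\textbf{Step 2 (uniqueness and removal of $\Vert u\Vert_{L^p}$).} We expect this to be the main obstacle. We first show that $Lu=0$ and $u\in W^{2,p}\cap W^{1,p}_0$ force $u=0$.
\begin{itemize}
\item If $p\ge n$, this is the Aleksandrov maximum principle for strong solutions, applicable because $A\ge0$.
\item If $p<n$, we bootstrap first. Localizing Step 1 with cutoffs and using the Sobolev embedding $W^{2,p}\subset L^{p^*}$, we obtain $u\in W^{2,q}$ for an increasing finite sequence of exponents $q$. This lifts $u$ into $W^{2,n}$, and the Aleksandrov principle then applies.
\end{itemize}
For this case one can alternatively, since $p\ge2$ is the case actually used in the paper, simply note that $u\in H^1_0$ is then a weak solution of $Lu=0$. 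Uniqueness for the weak problem, i.e. coercivity as in \Cref{uniform-estimate-2}, then gives $u=0$.

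With uniqueness in hand, the estimate $\Vert u\Vert_{L^p}\le C\Vert Lu\Vert_{L^p}$ follows by contradiction, as in Lemma 9.17. Suppose $u_k$ satisfy $\Vert u_k\Vert_{L^p}=1$ and $\Vert Lu_k\Vert_{L^p}\to0$. Step 1 bounds $u_k$ in $W^{2,p}$. Weak compactness together with Rellich compactness yields a limit $u$ with $Lu=0$ and $\Vert u\Vert_{L^p}=1$, which contradicts uniqueness.

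\textbf{Step 3 (existence).} We use the method of continuity along $L_\theta=(1-\theta)(-\Delta)+\theta L$, $\theta\in[0,1]$. The estimate from Step 2 holds uniformly in $\theta$, since the ellipticity and continuity moduli of $L_\theta$ are uniform. Hence solvability of the Dirichlet problem for $-\Delta$ in $W^{2,p}\cap W^{1,p}_0$ transfers to $L$. That base case is classical via the Newtonian potential and the same boundary flattening, or, for $p\ge2$, via Lax--Milgram combined with the Step 1 regularity.

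For $p<2$ we would instead approximate $g$ by smooth functions, solve the approximate problems, and pass to the limit using the uniform estimate. The combined bound $\Vert u\Vert_{W^{2,p}}\le C\Vert Lu\Vert_{L^p}$ is exactly the claimed estimate.
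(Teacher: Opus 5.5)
The paper does not prove this statement; it only cites \cite{Gilbarg2001}. Your outline follows exactly the route taken there: a Calder\'on--Zygmund estimate with a lower-order term, uniqueness via the Aleksandrov maximum principle, removal of $\Vert u\Vert_{L^p}$ by the compactness argument of Lemma 9.17, and existence by continuity. You are also right to work with the paper's operator, where $A>0$. As literally stated, the theorem drops the sign condition on the zeroth-order coefficient that Gilbarg--Trudinger impose, and without it uniqueness fails (take $L=-\Delta-\lambda_1$, with $\lambda_1$ the first Dirichlet eigenvalue).

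There is, however, a genuine gap in Step 2 for $p<n$. Step 1 is an a priori estimate: it bounds $\Vert w\Vert_{W^{2,q}}$ only for $w$ already known to lie in $W^{2,q}$. Localizing with a cutoff $\eta$ and using $W^{2,p}\subset W^{1,p^*}$ shows that $L(\eta u)\in L^{p^*}$, but not that $\eta u\in W^{2,p^*}$. So the bootstrap, as you describe it, cannot be run.

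Upgrading regularity is a separate qualitative step (Lemma 9.16 in \cite{Gilbarg2001}). One solves an auxiliary Dirichlet problem at the higher exponent for an operator whose uniqueness at the lower exponent is already known, then identifies that solution with $u$. A suitable operator is $L+\sigma$ with $\sigma$ large (in the paper's sign convention), for which $\Vert w\Vert_{W^{2,p}}\le C\Vert (L+\sigma)w\Vert_{L^p}$ holds at every exponent \cite[Theorem 9.14]{Gilbarg2001}. The same conflation of estimate with regularity affects your base case ``Lax--Milgram combined with the Step 1 regularity'', since Lax--Milgram only produces an $H^1_0$ solution.

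Your coercivity argument does give uniqueness for $2\le p<n$. The paper only uses $p>n$, where Aleksandrov applies directly, so nothing downstream breaks. But for $1<p<2$ the proposal does not establish uniqueness, and therefore neither the estimate of Lemma 9.17 nor your approximation argument for existence.

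In this divergence-form, self-adjoint setting there is a short repair by duality, once the case $p'=p/(p-1)>2$ is in hand. Take $u\in W^{2,p}\cap W^{1,p}_0$ with $Lu=0$ and any $h\in C_c^\infty(\Omega)$, and solve $L\phi=h$ with $\phi\in W^{2,p'}\cap W^{1,p'}_0$. Integrating by parts in both directions gives $\int_\Omega uh\,dx=\int_\Omega (D\nabla u\cdot\nabla\phi+Au\phi)\,dx=\int_\Omega \phi\,Lu\,dx=0$, hence $u=0$.
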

    \begin{remark}
        Note that the constant $C>0$ above depends only on the domain $\Omega$ and the ellipticity and boundedness of the operator $L$, since here we have assumed that $D, A\in \mathcal A$, the constant $C > 0$ can be taken to be independent of $D$ and $A$.
    \end{remark}
    \begin{proof}[Proof of \Cref{solution-stability-1}]
        Assume that $D, A\in H^{s+2}(\Omega)$ belong to the admissible set $\mathcal A$ with $s > n/2$. Then $L$ is strictly elliptic with continuous coefficients. For boundary data $f\in H^{s+1/2}(\partial \Omega)$, the harmonic extension satisfies $Tf\in H^{s+1}(\Omega)$. By Sobolev embedding, $Tf \in H^{s+1}(\Omega) \hookrightarrow W^{2,p}(\Omega)$ for $p = \frac{2n}{n - 2s + 2} > n$ \cite[2.3.3]{Edmunds1996}, and $LTf \in L^p(\Omega)$. Now consider the PDE given by $Lv = -LTf$, $v\vert_{\partial\Omega} = 0$. By \Cref{GT9.15}, $\Vert v\Vert_{W^{2,p}(\Omega)} \leq C\Vert LTf\Vert_{L^p(\Omega)}$. Let $u = Tf + v$. Then $Lu = 0$ with trace $f$, and we obtain the bound
        \begin{align*}
            \Vert u\Vert_{W^{2,p}(\Omega)} &\leq \Vert Tf \Vert_{W^{2,p}(\Omega)} + \Vert u - Tf \Vert_{W^{2,p}(\Omega)} = \Vert Tf \Vert_{W^{2,p}(\Omega)} + \Vert v \Vert_{W^{2,p}(\Omega)}\\
            &\leq \Vert Tf\Vert_{W^{2,p}(\Omega)} + C\Vert LTf \Vert_{L^p(\Omega)}\\
            &\leq \Vert Tf\Vert_{W^{2,p}(\Omega)} + C\Vert L\Vert_{B(W^{2,p}(\Omega), L^p(\Omega))} \Vert Tf \Vert_{W^{2,p}(\Omega)}\\
            &\leq C\Vert f \Vert_{H^{s+1/2}(\partial\Omega)}.
        \end{align*}
        Finally $u\in W^{2,p}(\Omega)\hookrightarrow W^{1,\infty}(\Omega)$ by Morrey's embedding.
    \end{proof}
    \begin{lemma}\label{solution-stability-2}
        Let $(D, A)$ and $(D^\ast, A^\ast)$ be two admissible pairs of parameters in $\mathcal A\times \mathcal A$, and let $f\in H^{s+1/2}(\partial\Omega)$ be a fixed boundary value. Denote the corresponding solutions by $u$ and $u^\ast$, respectively. Then
        \begin{align*}
            \Vert u -u^\ast\Vert_{W^{1,\infty}(\Omega)} \leq C\Vert(D, A) - (D^\ast, A^\ast)\Vert_{H^{s+2}(\Omega; \RR^2)},
        \end{align*}
        for a constant $C>0$.
    \end{lemma}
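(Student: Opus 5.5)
The plan is to mirror the proof of \Cref{solution-stability-1}, applied to the difference $w := u - u^\ast$, and to obtain a $W^{2,p}$ bound with $p = \frac{2n}{n-2s+2} > n$. Morrey's embedding $W^{2,p}(\Omega)\hookrightarrow W^{1,\infty}(\Omega)$ then gives the claim.

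First, let $L$ and $L^\ast$ denote the operators with coefficients $(D,A)$ and $(D^\ast,A^\ast)$. Since $u$ and $u^\ast$ share the trace $f$, we have $w\in W^{1,p}_0(\Omega)$. Subtracting the equations gives
\begin{align*}
L w = L u - L u^\ast = (L^\ast - L)u^\ast = -\nabla\cdot\bigl((D^\ast - D)\nabla u^\ast\bigr) + (A^\ast - A)u^\ast .
\end{align*}
Expanding the divergence, the right-hand side is
\begin{align*}
g := -\nabla(D^\ast-D)\cdot\nabla u^\ast - (D^\ast - D)\Delta u^\ast + (A^\ast-A)u^\ast .
\end{align*}

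Next, I will bound $\|g\|_{L^p(\Omega)}$. From \Cref{solution-stability-1} and its proof, $\|u^\ast\|_{W^{2,p}(\Omega)}\le C\|f\|_{H^{s+1/2}(\partial\Omega)}$, uniformly over $\mathcal A\times\mathcal A$. Since $s>n/2$, we have $H^{s+2}(\Omega)\hookrightarrow W^{1,\infty}(\Omega)$, indeed even into $C^2(\overline\Omega)$. Hence
\begin{align*}
\|g\|_{L^p} \le C\bigl(\|D-D^\ast\|_{W^{1,\infty}} + \|A-A^\ast\|_{L^\infty}\bigr)\|u^\ast\|_{W^{2,p}} \le C\|(D,A)-(D^\ast,A^\ast)\|_{H^{s+2}(\Omega;\RR^2)} .
\end{align*}
The term $(D^\ast-D)\Delta u^\ast$ uses $\Delta u^\ast\in L^p$ together with $D^\ast-D\in L^\infty$, and the gradient term uses $\nabla u^\ast\in L^\infty\subset L^p$.

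Then I apply \Cref{GT9.15} to $Lw = g$, $w|_{\partial\Omega}=0$. This requires writing $L$ in nondivergence form, $-D\Delta - \nabla D\cdot\nabla + A$, which has continuous coefficients because $D\in H^{s+2}\subset C^1(\overline\Omega)$. The theorem gives $\|w\|_{W^{2,p}}\le C\|g\|_{L^p}$. By the preceding remark, the constant is uniform over $\mathcal A$, since the ellipticity and boundedness of the coefficients are controlled by $c_1$, $c_2$ and Sobolev embedding.

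The main care point is that the $W^{2,p}$ solution coincides with the $H^1$ weak solution $w$. This follows from uniqueness, since $p>n\ge 2$ on a bounded domain gives $W^{2,p}\cap W^{1,p}_0\subset H^1_0$. A second care point is the uniformity of the constant in \Cref{GT9.15}: strictly, the Gilbarg–Trudinger constant also depends on the modulus of continuity of the leading coefficient. Here that modulus is controlled uniformly through the $H^{s+2}$ bound $c_2$, for example via a uniform H\"older bound from Sobolev embedding.
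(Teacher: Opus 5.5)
Your proposal is correct and follows essentially the same route as the paper. You write $Lw = -\nabla\cdot(\delta D\,\nabla u^\ast) + \delta A\, u^\ast$, bound the right-hand side in $L^p$ using the $W^{2,p}$ bound on $u^\ast$ from \Cref{solution-stability-1} together with $H^{s+2}\hookrightarrow W^{1,\infty}$, apply \Cref{GT9.15}, and conclude by Morrey's embedding. The paper leaves two points implicit that you spell out correctly: identifying the $W^{2,p}$ solution with the weak $H^1$ solution, and noting that the Gilbarg--Trudinger constant also depends on the modulus of continuity of $D$, which is uniformly controlled over $\mathcal A$.
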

    \begin{proof}
        We let $w = u - u^\ast \in W_0^{1,p}(\Omega) \cap W^{2,p}(\Omega)$, $\delta D = D^\ast - D$ and $\delta A = A^\ast - A$. Then $w$ solves the following PDE
        \begin{align*}
            Lw = -\nabla\cdot(D(x)\nabla w(x)) + A(x)w(x) = - \nabla \cdot (\delta D(x) \nabla u^\ast(x)) + \delta A(x) u^\ast(x) =: g(x).
        \end{align*}
        By \Cref{GT9.15}, we again have
        \begin{align*}
            \Vert w\Vert_{W^{2,p}(\Omega)} \leq C \Vert g\Vert_{L^p(\Omega)} \leq C \Vert u^\ast\Vert_{W^{2, p}(\Omega)}(\Vert \delta D\Vert_{W^{1,\infty}(\Omega)} + \Vert \delta A\Vert_{L^\infty(\Omega)}).
        \end{align*}
        The term $\Vert u^\ast\Vert_{W^{2,p}(\Omega)}$ is bounded by \Cref{solution-stability-1}.
        Again, by Sobolev embedding and Morrey's inequality, $\Vert w\Vert_{W^{1,\infty}(\Omega)} \leq C\Vert (\delta D, \delta A)\Vert_{H^{s+2}(\Omega; \RR^2)}$ for some constant $C > 0$.
    \end{proof}
    Now assume that $D, A\in W^{s+2, \infty}(\Omega)$ with $s\in \NN$. We show that the solution satisfies regularity $u \in H^{s+1}(\Omega)$.
    \begin{lemma}\label{Schauder-estimate}
        Let $(D, A) \in \mathcal A_\infty \times\mathcal A_\infty$ be a pair of admissible coefficients, let $f\in H^{s+1/2}(\partial\Omega)$ be the boundary value, and let $g\in H^{s-1}(\Omega)$. Then the solution $u$ to $Lu = g$, $u\vert_{\partial\Omega} = f$ belongs to $H^{s+1}(\Omega)$ and satisfies the norm estimate
        \begin{align*}
            \Vert u \Vert_{H^{s+1}(\Omega)} \leq C(\Vert g \Vert_{H^{s-1}(\Omega)} + \Vert f\Vert_{H^{s+1/2}(\partial\Omega)}),
        \end{align*}
        where the constant $C>0$ can be chosen uniformly for all $(D, A)\in \mathcal A_\infty \times\mathcal A_\infty$.
    \end{lemma}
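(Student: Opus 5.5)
The plan is to reduce to the homogeneous Dirichlet problem and then run the standard higher-order $L^2$ regularity argument (difference quotients/induction in the order of derivatives), tracking that all constants depend only on $\Omega$, $s$, $c_1$, $c_2$.

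\emph{Step 1: lifting the boundary data.} Let $T$ be the harmonic extension. For $f\in H^{s+1/2}(\partial\Omega)$, standard trace/extension theory gives $Tf\in H^{s+1}(\Omega)$ with $\Vert Tf\Vert_{H^{s+1}(\Omega)}\le C\Vert f\Vert_{H^{s+1/2}(\partial\Omega)}$. Set $w:=u-Tf\in H^1_0(\Omega)$. Then $w$ solves $Lw=g-LTf$ with $w|_{\partial\Omega}=0$. Since $D,A\in W^{s+2,\infty}(\Omega)$ and $Tf\in H^{s+1}(\Omega)$, we can expand
\[
LTf=-D\Delta Tf-\nabla D\cdot\nabla Tf+A\,Tf .
\]
The Leibniz rule in $H^{s-1}$, with $W^{s-1,\infty}$ multipliers, then yields $\Vert LTf\Vert_{H^{s-1}}\le C c_2\Vert f\Vert_{H^{s+1/2}(\partial\Omega)}$. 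So it suffices to prove
\[
\Vert w\Vert_{H^{s+1}(\Omega)}\le C\Vert h\Vert_{H^{s-1}(\Omega)}
\]
for the Dirichlet problem $Lw=h$, $w\in H^1_0(\Omega)$, with $h\in H^{s-1}(\Omega)$.

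\emph{Step 2: base case and induction.} For $s=0$, i.e.\ $h\in H^{-1}$, the bound is the energy estimate of \Cref{uniform-estimate-2}, with constant $1/c_1$. For $s\ge1$, I would apply the classical global $H^{k+2}$ regularity theorem for divergence-form operators on smooth domains (Evans, Thm.~6.3.5 / Gilbarg--Trudinger Thm.~8.13), inductively for $k=0,\dots,s-1$. The inductive step requires $D\in C^{k+1}(\overline\Omega)$ and $A\in C^{k}(\overline\Omega)$. Both hold because $W^{s+2,\infty}\subset C^{s+1}$ on a smooth bounded domain. The estimate reads
\[
\Vert w\Vert_{H^{k+2}}\le C\bigl(\Vert h\Vert_{H^{k}}+\Vert w\Vert_{L^2}\bigr),
\]
and the term $\Vert w\Vert_{L^2}$ is absorbed by the energy estimate since $A>0$ makes $L$ coercive. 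Taking $k=s-1$ gives the claim.

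\emph{Step 3: uniformity (the main obstacle).} The main point requiring care is that the constant is uniform over $\mathcal A_\infty\times\mathcal A_\infty$. This means checking that the constants in the regularity theorem depend only on the ellipticity constant and on $C^{k+1}$ (resp.\ $C^k$) norms of the coefficients, and not on finer features such as moduli of continuity of top derivatives.

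To settle this, I would inspect the difference-quotient proof. Interior difference quotients, together with boundary flattening on the fixed smooth $\partial\Omega$, only produce commutator terms $[\partial^\alpha, D]\nabla w$ and $\partial^\alpha(Aw)$. These are bounded by $\Vert D\Vert_{W^{k+1,\infty}}\Vert w\Vert_{H^{k+1}}$ and $\Vert A\Vert_{W^{k,\infty}}\Vert w\Vert_{H^k}$ respectively. Each such norm is at most $c_2$, and the lower bound $c_1$ controls coercivity.

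Alternatively, uniformity can be argued structurally. The coefficients are identically $1$ near $\partial\Omega$ (since $\varphi\equiv1$ on $\Omega-\Omega'$), so near the boundary the operator is just $-\Delta+1$. Boundary regularity therefore involves only the Laplacian with fixed constants, and only interior estimates depend on $(D,A)$, where the commutator bounds above apply directly.

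Combining Steps 1--3 gives
\[
\Vert u\Vert_{H^{s+1}}\le \Vert Tf\Vert_{H^{s+1}}+\Vert w\Vert_{H^{s+1}}\le C\bigl(\Vert g\Vert_{H^{s-1}}+\Vert f\Vert_{H^{s+1/2}}\bigr).
\]
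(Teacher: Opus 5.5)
Your argument is correct, but it takes a different route from the paper. The paper never uses variable-coefficient regularity theory. It rewrites $Lu=g$ as the Poisson problem
\[
-\Delta u=\frac{\nabla D}{D}\cdot\nabla u-\frac{A}{D}\,u+\frac{1}{D}\,g,\qquad u|_{\partial\Omega}=f,
\]
and bootstraps from the $H^1$ energy bound. At each step $t=1,\dots,s$ it inverts only the fixed Dirichlet Laplacian, keeping the inhomogeneous boundary data in place so that no lifting is needed. It then uses that $\nabla D/D$, $A/D$, $1/D$ are bounded in $W^{t-1,\infty}$ by constants depending only on $c_1,c_2$.

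What this buys is that uniformity over $\mathcal A_\infty\times\mathcal A_\infty$, your Step 3, is automatic: the only operator being inverted does not depend on $(D,A)$.

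Your route instead lifts $f$ harmonically and applies the divergence-form $H^{k+2}$ theorem to $L$ itself. Both routes use essentially the same coefficient regularity ($D\in W^{s,\infty}$, $A\in W^{s-1,\infty}$). The difference is that in yours, uniformity rests on how the constant of the cited theorem depends on the coefficients. This is cleanest to settle by citing Gilbarg--Trudinger, Theorem 8.13 for $k\ge 1$ and Theorem 8.12 for $k=0$. There the constant is stated explicitly as $C(n,\lambda,K,k,\partial\Omega)$, with $K$ bounding the $C^{k,1}$ norm of $D$ and the $C^{k-1,1}$ norm of $A$, and those theorems even accept inhomogeneous Dirichlet data directly. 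Evans' version leaves the dependence as ``on the coefficients'', so with it your proof-inspection argument would genuinely be required.

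Your structural observation that $L=-\Delta+1$ on $\Omega\setminus\Omega'$ is valid. It would confine the $(D,A)$-dependence to interior estimates, a simplification the paper does not exploit.
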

    \begin{proof}
        We bootstrap the regularity of $u$. By standard elliptic theory, we have $u\in H^1(\Omega)$. Assuming that $u$ has $H^t(\Omega)$ regularity for $1\leq t\leq s$, applying $L^\infty$ control on the derivatives of the coefficients, we have
        \begin{align*}
            - \Delta u(x) = \frac{\nabla D(x)}{D(x)} \cdot \nabla u(x) - \frac{A(x)}{D(x)}u(x) + \frac{1}{D(x)}g(x) \in H^{t-1}(\Omega),\quad  u\vert_{\partial\Omega} \in H^{s+1/2}(\partial\Omega),
        \end{align*}
        Denoting the right-hand side by $B$, we obtain
        \begin{align*}
            \Vert B\Vert_{H^{t-1}(\Omega)} \leq C\left(\Vert\nabla D/D\Vert_{W^{t-1,\infty}(\Omega)} + \Vert A/D\Vert_{W^{t-1,\infty}(\Omega)}\right)\Vert u\Vert_{H^t(\Omega)} + C\Vert 1/D\Vert_{W^{t-1,\infty}(\Omega)}\Vert g\Vert_{H^{t-1}(\Omega)},
        \end{align*}
        for some constant $C>0$.
        Inverting the Laplacian gives $u\in H^{t+1}(\Omega)$. Note that all the terms in the above equation involving $D$ and $A$ can be bounded uniformly for all $D, A\in \mathcal A_\infty$. This proves the claim.
    \end{proof}
    \subsection{Complex Geometric Optics Solutions}
    In this section, we recall some essential results on CGO solutions to elliptic PDEs, again, we start with the equation $Lu = -\nabla\cdot(D(x)\nabla u(x)) + A(x)u(x) = 0$ and assume $D$ and $A$ are uniformly positive $H^{s+2}(\Omega)$ functions with $s > n/2$.
 
    \subsubsection{Liouville Transform and CGO Solutions}
    For the Liouville transformation of the above elliptic PDE, define the potential $V$ from the conductivity and attenuation coefficients by
    \begin{align*}
        V := \Delta\sqrt{D}/\sqrt{D} + A/D.
    \end{align*}
    A function $u$ on $\Omega$ solves $Lu = 0$ if and only if $w = \sqrt{D}u$ solves the Schr\"odinger equation
    \begin{align*}
        (-\Delta + V)w = 0.
    \end{align*}
    $w$ is called a CGO solution if $w = e^{\xi\cdot x}(1 + \psi(x))$ with $\xi\in\CC^n$, $\xi\cdot\xi = 0$, where $\psi$ solves the PDE
    \begin{equation}\label{cgo}
        -\Delta \psi(x) - 2\xi\cdot \nabla \psi(x) + V(x)\psi(x) = -V(x)\;\;\text{on}\;\; \Omega.
    \end{equation}
    We recall the following classical result on CGO regularity:
    \begin{theorem}[{\cite[corollary 2.5]{Sylvester1987}; \cite[corollary 3.2]{Stefanov2009}}]\label{cgo-1}
        Let $s > n/2$ and $\gamma > 0$. Then there exists a constant $c = c(\Omega, \gamma) > 0$ such that, if $\xi \in \CC^n$ and $V \in H^s(\Omega)$ satisfy $\xi\cdot \xi = 0$, $\vert\xi\vert \geq \gamma$, and $\Vert V\Vert_{H^s(\Omega)} \leq c\vert\xi\vert$, there exists a solution $\psi \in H^{s+1}(\Omega)$ to \eqref{cgo} that satisfies
        \begin{align*}
            \vert\xi\vert \Vert \psi \Vert_{H^s(\Omega)} + \Vert \psi\Vert_{H^{s+1}(\Omega)} \leq C\Vert V \Vert_{H^s(\Omega)}
        \end{align*}
        for some constant $C > 0$ independent of $\xi$.
    \end{theorem}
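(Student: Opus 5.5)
Because $\xi\cdot\xi=0$, the function $e^{\xi\cdot x}$ is harmonic. I would therefore reduce \eqref{cgo} to a perturbative fixed-point problem for a right inverse of the conjugated Laplacian $\Delta_\xi := \Delta + 2\xi\cdot\nabla$. Equation \eqref{cgo} reads $-\Delta_\xi\psi + V\psi = -V$. Suppose we have a bounded operator $G_\xi$ with $\Delta_\xi G_\xi = \operatorname{id}$ on $H^s(\Omega)$. Then it suffices to solve
\begin{align*}
\psi = G_\xi\bigl(V(1+\psi)\bigr).
\end{align*}

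\textbf{Step 1: the Faddeev-type estimate.} Extend $H^s(\Omega)$ functions to compactly supported functions on a ball $B\supset\overline\Omega$; this is bounded by a Sobolev extension operator. Then use the Sylvester--Uhlmann construction: on a cube (or in weighted $L^2$ spaces) containing $B$, divide by the symbol $-|\eta|^2+2i\xi\cdot\eta$ after shifting the Fourier lattice by $\tfrac12\operatorname{Im}\xi$. This keeps the symbol away from zero, at distance $\gtrsim|\xi|$. The result is $G_\xi$ with
\begin{align*}
\Vert G_\xi f\Vert_{H^{s}}\le \frac{C}{|\xi|}\Vert f\Vert_{H^s},\qquad \Vert G_\xi f\Vert_{H^{s+1}}\le C\Vert f\Vert_{H^s},
\end{align*}
uniformly for $|\xi|\ge\gamma$. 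The second bound interpolates between the $1/|\xi|$ bound at $H^s$ and the trivial $H^{s+2}$ bound, with the $|\eta|^2$ factor absorbed at high frequency. Both are obtained on the Fourier side, since derivatives commute with the multiplier.

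\textbf{Step 2: contraction.} Since $s>n/2$, $H^s$ is an algebra: $\Vert V\psi\Vert_{H^s}\le C_0\Vert V\Vert_{H^s}\Vert\psi\Vert_{H^s}$. Hence the map $\psi\mapsto G_\xi(V\psi)$ has norm at most $C C_0\Vert V\Vert_{H^s}/|\xi|$ on $H^s$. If $\Vert V\Vert_{H^s}\le c|\xi|$ with $c$ small, this norm is at most $1/2$. The Neumann series then gives a unique solution $\psi\in H^s$ of the fixed-point equation with
\begin{align*}
\Vert\psi\Vert_{H^s}\le 2\Vert G_\xi V\Vert_{H^s}\le C\Vert V\Vert_{H^s}/|\xi|.
\end{align*}
This is the first half of the estimate.

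\textbf{Step 3: upgrade to $H^{s+1}$.} Write $\psi = G_\xi(V+V\psi)$ and apply the $H^{s+1}$ bound of $G_\xi$. Using the algebra property and $\Vert V\Vert_{H^s}\le c|\xi|$ together with the Step 2 bound,
\begin{align*}
\Vert\psi\Vert_{H^{s+1}}\le C(\Vert V\Vert_{H^s}+C_0\Vert V\Vert_{H^s}\Vert\psi\Vert_{H^s})\le C\Vert V\Vert_{H^s}(1+C c).
\end{align*}
Restricting to $\Omega$ gives a genuine solution of \eqref{cgo}. The constants depend only on $\Omega$, $\gamma$ and $s$, not on $\xi$.

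The main obstacle is Step 1: proving the $1/|\xi|$ gain in $H^s$, not just in $L^2$ or weighted $L^2$, uniformly in complex $\xi$. I would first try the periodic (Hähner) version on a cube. There the multiplier bound $|{-|\eta|^2}+2i\xi\cdot\eta|\ge c|\xi|$ on the shifted lattice gives the $H^s$ statement immediately, because Fourier multipliers commute with $\langle\eta\rangle^s$. The cost is that we must check that the multiplication and cutoff steps stay compatible with periodization. This works by solving for a periodic $\psi$ on the cube, with $V$ extended by zero there.
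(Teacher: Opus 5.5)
The paper gives no proof of this statement; it quotes it from Sylvester--Uhlmann (and Stefanov--Uhlmann). The Sylvester--Uhlmann construction works on all of $\RR^n$: it uses the Faddeev-type inverse of $\Delta+2\xi\cdot\nabla$ in weighted spaces $L^2_\delta$, $-1<\delta<0$, with $\Vert G_\xi f\Vert_{L^2_\delta}\le C|\xi|^{-1}\Vert f\Vert_{L^2_{\delta+1}}$ and its Sobolev versions. Your route replaces this with H\"ahner's periodic operator on a cube, and it is correct. For a statement that is purely local on $\Omega$, it is also simpler. On a discrete shifted lattice, $G_\xi$ is an exact Fourier multiplier, so both the $|\xi|^{-1}$ gain in $H^s$ and the uniform $H^s\to H^{s+1}$ bound reduce to pointwise symbol bounds. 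The algebra property of $H^s$ for $s>n/2$ then handles the contraction without any weights. What you give up is a global solution on $\RR^n$ with uniqueness in a weighted class, which the theorem does not need. Steps 2 and 3 are fine as written. You can even run the Neumann series in $H^s(\Omega)$, taking $G_\xi$ to be restriction $\circ$ (cube inverse) $\circ$ extension.

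Step 1 needs three corrections.

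\textbf{Shift direction.} The imaginary part of $p(\eta)=-|\eta|^2+2i\xi\cdot\eta$ is $2\,\mathrm{Re}\,\xi\cdot\eta$. So you must rotate the cube so that $\mathrm{Re}\,\xi\parallel e_1$ and shift the lattice $\tfrac{2\pi}{L}\ZZ^n$ by half a step in the $e_1$ direction. This gives $|p(\eta)|\ge 2\pi|\mathrm{Re}\,\xi|/L=\sqrt2\,\pi|\xi|/L$. Shifting by $\tfrac12\mathrm{Im}\,\xi$ leaves $\mathrm{Re}\,\xi\cdot\eta$ unchanged and gives no lower bound.

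\textbf{Quasi-periodicity and extension.} Because of the half-step shift, $\psi$ is quasi-periodic ($e^{-i\pi x_1/L}\psi$ is periodic), not periodic. Also, $V$ is extended by zero from the ball $B$ after the Sobolev extension, not from $\Omega$.

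\textbf{The $H^{s+1}$ bound.} There is no uniform $H^s\to H^{s+2}$ bound to interpolate with. Near the sphere $|\eta+\mathrm{Im}\,\xi|=|\mathrm{Im}\,\xi|$ one has $\langle\eta\rangle^2/|p(\eta)|\sim|\xi|$. Interpolating this $O(|\xi|)$ bound with the $O(|\xi|^{-1})$ one still gives $O(1)$. It is cleaner to argue directly:
\begin{itemize}
\item if $|\eta|\ge 4|\mathrm{Im}\,\xi|$, then $|p(\eta)|\ge|\eta|^2/2$;
\item otherwise $|p(\eta)|\gtrsim|\xi|\gtrsim\gamma+|\eta|$.
\end{itemize}
Hence $|p(\eta)|\ge c_{\gamma,L}(|\xi|+\langle\eta\rangle)$ on the shifted lattice, and this single bound gives both multiplier estimates at once.
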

    We make a few comments on the choice of the Sobolev exponent $s$ as well as the choice of the preconditioning for the QPAT problem.
    Given that $D, A\in H^{s+2}(\Omega)$ for $s > n/2$, we see that $V \in H^s(\Omega)$,
    hence $\psi, w \in H^{s+1}(\Omega)$.
    \par
    Taking $D, A\in W^{s + 2, \infty}(\Omega)$ with $s > n/2$, the conclusion of the preceding theorem also holds by replacing the $H^s(\Omega)$ norm on $V$ with the $W^{s, \infty}(\Omega)$ norm, which validates our choice of the admissible set $\mathcal A_\infty$.
    
    \subsubsection{Matrix Field Generated by CGO Solutions}
    In this section, we recall the construction of the $2n$ real CGO solutions in \cite{Bal2010} for the nonlinear QPAT problem. For a CGO solution $w = e^{\xi\cdot x}(1 + \psi(x))$ to the Schr\"odinger equation, we have $\nabla w = e^{\xi\cdot x}(\xi + \xi\psi(x) + \nabla\psi(x))$,
    if the last two summands on the right-hand side are small, then the vector field $\nabla w$ is controlled by $\xi$. This argument is formalized in the following theorem.
    \begin{theorem}[{\cite[theorem 3.4]{Bal2010}}]\label{cgo-2}
        Let $s > n/2$, and consider two solutions $w_{i} = e^{\xi_i \cdot x}(1 + \psi_i(x))$ for $i = 1,2$ to the Schr\"odinger equation $(-\Delta + V)w = 0$, suppose that $r = \vert\xi_1\vert = \vert \xi_2\vert$ satisfies the lower bound $r > \gamma$, $\Vert V\Vert_{H^s(\Omega)} \leq c(\Omega, \gamma)r$, then
        \begin{align*}
            \frac{1}{r} e^{-(\xi_1 + \xi_2)\cdot x}(w_1\nabla w_2 - w_2 \nabla w_1) = \frac{\xi_2 - \xi_1}{r} + h(x),\;\; \text{where}\;\; \Vert h\Vert_{H^s(\Omega; \CC^n)} < \frac{C}{r}
        \end{align*}
        for some constant $C > 0$ independent of $\xi_i$'s.
    \end{theorem}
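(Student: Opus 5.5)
The proof I would give is a direct computation followed by an estimate on the remainder. Write $\xi_i = r\zeta_i$ and consider
\[
w_1\nabla w_2 - w_2\nabla w_1, \qquad w_i = e^{\xi_i\cdot x}(1+\psi_i),
\]
whose gradients are
\[
\nabla w_i = e^{\xi_i\cdot x}\bigl(\xi_i(1+\psi_i)+\nabla\psi_i\bigr).
\]
Substituting and factoring out the common exponential, I expect the identity
\[
e^{-(\xi_1+\xi_2)\cdot x}(w_1\nabla w_2 - w_2\nabla w_1) = (\xi_2-\xi_1)(1+\psi_1)(1+\psi_2) + (1+\psi_1)\nabla\psi_2 - (1+\psi_2)\nabla\psi_1 .
\]
Dividing by $r$ then gives $(\xi_2-\xi_1)/r + h$, where
\[
h = \frac{\xi_2-\xi_1}{r}\bigl(\psi_1+\psi_2+\psi_1\psi_2\bigr) + \frac1r\bigl((1+\psi_1)\nabla\psi_2-(1+\psi_2)\nabla\psi_1\bigr).
\]
This step is pure algebra.

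The second step bounds $h$ in $H^s(\Omega;\CC^n)$ using \Cref{cgo-1}. Under the hypotheses $|\xi_i|=r\ge\gamma$ and $\|V\|_{H^s}\le c r$, that theorem gives
\[
\|\psi_i\|_{H^s}\le \frac{C\|V\|_{H^s}}{r}\le \frac{C'}{r}, \qquad \|\psi_i\|_{H^{s+1}}\le C\|V\|_{H^s}.
\]
For the remaining steps I treat $\|V\|_{H^s}$ as fixed by the background coefficients, so it counts as a constant. Since $s>n/2$, $H^s(\Omega)$ is a Banach algebra, so $\|\psi_1\psi_2\|_{H^s}\le C\|\psi_1\|_{H^s}\|\psi_2\|_{H^s}=O(r^{-2})$. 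Together with $|\xi_2-\xi_1|/r\le 2$, this shows the first group of terms in $h$ is $O(1/r)$ in $H^s$.

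For the second group, $\nabla\psi_i\in H^s$ has norm at most $\|\psi_i\|_{H^{s+1}}\le C$. Using the algebra property again,
\[
\|(1+\psi_1)\nabla\psi_2\|_{H^s}\le C\bigl(1+\|\psi_1\|_{H^s}\bigr)\|\psi_2\|_{H^{s+1}}\le C.
\]
Dividing by $r$ gives $O(1/r)$. One detail needs care: the constant function $1$ belongs to $H^s(\Omega)$ because $\Omega$ is bounded, so the multiplier $1+\psi_1$ is legitimately in the algebra.

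I expect the main obstacle to be uniformity of the constants in $\xi$. The algebra constant depends only on $\Omega$ and $s$, and the constant from \Cref{cgo-1} is independent of $\xi$, so uniformity follows as long as I consistently use the bound $\|V\|_{H^s}\le cr$ only where a factor of $r$ is needed, and otherwise bound $\|V\|_{H^s}$ by a fixed constant. I will also confirm that complex-valued functions are harmless here: the algebra property holds verbatim for complex-valued $H^s$.
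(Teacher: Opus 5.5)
Your proof is correct and is essentially the argument behind the paper's citation of \cite{Bal2010}: the paper notes the expansion $\nabla w_i = e^{\xi_i\cdot x}(\xi_i(1+\psi_i)+\nabla\psi_i)$ just before the statement, your identity for $h$ is right, and the bounds $\Vert\psi_i\Vert_{H^s}\le C\Vert V\Vert_{H^s}/r$ and $\Vert\psi_i\Vert_{H^{s+1}}\le C\Vert V\Vert_{H^s}$ from \Cref{cgo-1}, together with the algebra property of $H^s(\Omega)$, give $\Vert h\Vert_{H^s}\le C\Vert V\Vert_{H^s}/r$ with $C$ independent of $\xi_i$, as required. You should state explicitly that $\psi_i$ are the particular remainders constructed in \Cref{cgo-1}, since an arbitrary solution of \eqref{cgo} need not satisfy those bounds.
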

    The preceding estimate shows that, for sufficiently large complex phases, the vector field $w_1\nabla w_2 - w_2\nabla w_1$ is uniformly close to the constant direction $\xi_2-\xi_1$. One can find such pairs of complex phases to span every direction of $\RR^n$. In fact, let $\{e_k\}_{k=1}^n$ be the standard basis of $\RR^n$. There exist $2n$ real CGO solutions $w_k \in H^{s+1}(\Omega)$ for $1\leq k\leq 2n$, such that the $n$ vector fields given by $\nabla w_{2k-1} w_{2k} - w_{2k-1} \nabla w_{2k}$ are uniformly close in direction to $e_k$ in $H^s(\Omega)$ for all $1\leq k\leq n$ \cite[section 3.3]{Bal2010}. We summarize this observation in the following result:
    \begin{corollary}\label{cgo-3}
        There exists $2n$ real CGO solutions $w_k\in H^{s+1}(\Omega) \hookrightarrow W^{1,\infty}(\Omega)$ such that the matrix field given by $(\nabla w_{2k-1} w_{2k} - w_{2k-1} \nabla w_{2k})_{k=1}^n \in L^\infty(\Omega; \RR^{n\times n})$ is invertible with bounded inverse on $\Omega$.
    \end{corollary}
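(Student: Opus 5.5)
Since \Cref{cgo-3} is the last statement before the proof, I will prove it by building the $2n$ real solutions from pairs of CGO solutions and \Cref{cgo-2}. Fix $k\in\{1,\dots,n\}$ and pick a unit vector $e'$ orthogonal to $e_k$. For a large parameter $r>0$, set $\xi_{\pm} := \tfrac{r}{\sqrt 2}(\pm i e_k + e')$, so that $\xi_\pm\cdot\xi_\pm = 0$ and $|\xi_\pm| = r$. Apply \Cref{cgo-1} with $V = \Delta\sqrt D/\sqrt D + A/D\in H^s(\Omega)$, which holds since $D,A\in H^{s+2}$, $s>n/2$, and $D$ is bounded below. For $r$ large enough that $\|V\|_{H^s}\le c r$, this gives $w_\pm = e^{\xi_\pm\cdot x}(1+\psi_\pm)\in H^{s+1}(\Omega)$. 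Because $V$ is real, we may take $\psi_- = \overline{\psi_+}$ by conjugating \eqref{cgo}, so $w_- = \overline{w_+}$. The real solutions are then
\begin{align*}
w_{2k-1} := \operatorname{Re} w_+ = \tfrac12(w_+ + w_-), \qquad w_{2k} := \operatorname{Im} w_+ = \tfrac{1}{2i}(w_+ - w_-).
\end{align*}
Both solve $(-\Delta+V)w=0$ and lie in $H^{s+1}(\Omega)\hookrightarrow W^{1,\infty}(\Omega)$, because $s+1>n/2+1$.

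Next comes the key algebraic step. Expanding the Wronskian-type field in terms of $w_\pm$, the diagonal terms cancel:
\begin{align*}
\nabla w_{2k-1}\, w_{2k} - w_{2k-1}\nabla w_{2k} = \tfrac{1}{2i}\left(w_+\nabla w_- - w_-\nabla w_+\right).
\end{align*}
Since $\xi_+ + \xi_- = \sqrt2\, r e'$, \Cref{cgo-2} applied to the pair $(w_+,w_-)$ gives
\begin{align*}
w_+\nabla w_- - w_-\nabla w_+ = r\, e^{\sqrt2 r e'\cdot x}\Bigl(\tfrac{\xi_- - \xi_+}{r} + h_k(x)\Bigr), \qquad \tfrac{\xi_- - \xi_+}{r} = -\sqrt2\, i\, e_k,
\end{align*}
with $\|h_k\|_{H^s}\le C/r$. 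It follows that the $k$-th column of the matrix field equals
\begin{align*}
-\tfrac{r}{\sqrt2}\, e^{\sqrt2 r e'\cdot x}\bigl(e_k + \tilde h_k(x)\bigr), \qquad \|\tilde h_k\|_{L^\infty}\le C\|\tilde h_k\|_{H^s}\le C/r,
\end{align*}
using the Sobolev embedding $H^s\hookrightarrow L^\infty$. In particular $\tilde h_k$ is real, since the left side is real.

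It remains to assemble the $n\times n$ matrix. Its columns are scalar multiples of $e_k+\tilde h_k$, and the scalars $-\tfrac{r}{\sqrt2}e^{\sqrt2 r e'\cdot x}$ are bounded above and below by positive constants on the bounded set $\Omega$ for fixed $r$. The matrix therefore factors as $(\mathrm{id}+H(x))\,\Lambda(x)$ with $\Lambda$ diagonal and uniformly invertible and $\|H\|_{L^\infty}\le C/r$. Choosing $r$ large makes $\|H\|_{L^\infty}\le 1/2$, so a Neumann series shows the matrix is invertible with $L^\infty$-bounded inverse. The same $r$ may be used for all $k$, for instance with $e' = e_{k+1}$ and indices taken mod $n$, which is possible since $n\ge 3$.

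The main obstacle is bookkeeping rather than substance. One point is the reality and conjugation-symmetry of the CGO correction, which requires the uniqueness or construction in \Cref{cgo-1} to commute with conjugation; if that is in doubt, it can be enforced by defining $\psi_- := \overline{\psi_+}$ directly. The other is ensuring that the exponential weight $e^{\sqrt2 r e'\cdot x}$ does not spoil the uniform bounds, which holds because $r$ is fixed once chosen. The constants then depend only on $\Omega$ and the a priori bound on $\|V\|_{H^s}$, hence are uniform over the admissible set.
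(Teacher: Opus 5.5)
Your proposal is correct and takes essentially the paper's route. The paper proves the corollary only by citing \cite[section 3.3]{Bal2010}: take the real and imaginary parts of a CGO solution with phase $\xi$, and apply \Cref{cgo-2} to the conjugate pair $(w_+, \overline{w_+})$ so that each Wronskian-type column is a positive multiple of $-(e_k+\tilde h_k)$ with $\|\tilde h_k\|_{L^\infty}\le C/r$. You carry this out explicitly, including the conjugation symmetry and the Neumann-series step; the only slip is that the scalars $-\tfrac{r}{\sqrt2}e^{\sqrt2 r e'\cdot x}$ are negative, so they are bounded above and below in absolute value, which does not affect the argument.
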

    \subsection{Sobolev Interpolations}
    In this section, we list some of the key Sobolev interpolation results needed in our proofs, and again assume $\Omega\subseteq \RR^n$ to be a bounded connected domain with smooth boundary.
    \begin{theorem}[Gagliardo--Nirenberg Inequalities {\cite{Nirenberg1959}, \cite[theorem 1.5.2]{Cherrier2012}}]
        Let $m \in \NN$, $1\leq p, r \leq \infty$, and $u \in L^p(\Omega) \cap L^r(\Omega)$. Assume that $D^m u \in L^p(\Omega)$. For an integer $0 \leq j \leq m$ and $j/m\leq \theta < 1$, define $q$ by
        \begin{align*}
            1/q = j/n + \theta(1/p - m/n) + (1-\theta)1/r
        \end{align*}
        Then, for any $\gamma\in \NN^n$, with $\vert\gamma\vert = j$, $D^\gamma u\in L^q(\Omega)$ and satisfies the Gagliardo--Nirenberg inequality
        \begin{align*}
            \Vert D^\gamma u\Vert_{L^q(\Omega)}\leq C\Vert D^m u\Vert_{L^p(\Omega)}^\theta \Vert u\Vert_{L^r(\Omega)}^{1-\theta} + C\Vert u\Vert_{L^s(\Omega)}
        \end{align*}
        with finite $1 \leq s \leq \max\{p, r\}$, and $C > 0$ is independent of $u$.
    \end{theorem}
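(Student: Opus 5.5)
The plan is to reduce to the whole-space inequality on $\RR^n$ and then prove that inequality by Nirenberg's inductive scheme. Scaling will serve only as a consistency check on the exponents.

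\emph{Reduction to $\RR^n$.} Since $\partial\Omega$ is smooth, I will take Stein's universal extension operator $E$. It is bounded $W^{k,p}(\Omega)\to W^{k,p}(\RR^n)$ for every $k\le m$ and every $1\le p\le\infty$. Composing with a fixed cutoff $\chi\in C_c^\infty(\RR^n)$ that equals one near $\overline\Omega$, I get $\|D^m(\chi Eu)\|_{L^p(\RR^n)}\le C\sum_{k\le m}\|D^k u\|_{L^p(\Omega)}$, and similarly for the $L^r$ norm. The intermediate derivatives $D^k u$ with $0<k<m$ are absorbed by an Ehrling-type estimate $\|D^k u\|_{L^p(\Omega)}\le \epsilon\|D^m u\|_{L^p(\Omega)}+C_\epsilon\|u\|_{L^p(\Omega)}$, proved by the usual compactness argument. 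This is precisely why the additive lower-order term $C\|u\|_{L^s(\Omega)}$ appears in the bounded-domain statement. I will choose $s=\max\{p,r\}$, or use Hölder on the bounded domain to pass to any finite $s\le\max\{p,r\}$. Once the whole-space inequality for $\chi Eu$ is established, restricting back to $\Omega$ and applying $a^\theta b^{1-\theta}\le a+b$ where needed will give the stated inequality.

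\emph{Whole-space inequality.} For $u\in C_c^\infty(\RR^n)$ I aim for the homogeneous form $\|D^j u\|_{L^q}\le C\|D^m u\|_{L^p}^\theta\|u\|_{L^r}^{1-\theta}$. The defining relation for $q$ is exactly the one dictated by the dilation $u\mapsto u(\lambda\,\cdot)$, so scaling confirms that the exponents are consistent. The proof itself proceeds through three inputs, carried out in this order.
\begin{enumerate}
\item The endpoint Sobolev (Gagliardo) inequality $\|u\|_{L^{np/(n-p)}}\le C\|Du\|_{L^p}$ for $p<n$. I will prove it by the product-of-one-dimensional-integrals argument together with the generalized Hölder inequality. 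For $p\ge n$ I will substitute the corresponding Morrey/BMO-type bounds.
\item The first-order interpolation $\|Du\|_{L^q}\le C\|D^2u\|_{L^p}^{1/2}\|u\|_{L^r}^{1/2}$ with $2/q=1/p+1/r$. I will first prove it in one variable by integrating $|u'|^{q}=u'\,|u'|^{q-2}u'$ by parts and applying Hölder. It extends to $\RR^n$ by applying the one-dimensional bound on lines in each coordinate direction and integrating over the transverse variables.
\item Induction on $m$ and $j$ combining 1 and 2: chaining the cases $\theta=j/m$ via Hölder-type convexity of $k\mapsto\log\|D^k u\|$, and then raising $\theta$ above $j/m$ by applying the Sobolev inequality to $D^j u$.
\end{enumerate}
Finally, density extends the inequality from $C_c^\infty(\RR^n)$ to the class of $\chi Eu$.

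\emph{Main obstacle.} I expect the hardest part to be the bookkeeping in the induction (step 3). It must cover all admissible $(p,r,j,m,\theta)$, including $p=1$ and $r=\infty$, and it must avoid the classical exceptional case in which $m-j-n/p$ is a nonnegative integer and $\theta=1$. The hypothesis $\theta<1$ excludes that case, so I would structure the induction so that $\theta=1$ is never invoked.
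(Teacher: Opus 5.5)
The paper does not prove this statement; it quotes it from Nirenberg and Cherrier--Milani, so your outline has to stand on its own. The architecture you chose is the classical one: extend to $\RR^n$, then run Nirenberg's induction built on the Gagliardo inequality and a one-dimensional interpolation lemma. As written, however, it does not close.

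\textbf{Limiting exponents.} In Step~3 you reach $\theta>j/m$ only through Sobolev endpoints. Those endpoints exist only while the exponent fed into the Gagliardo inequality stays below $n$. Beyond that you propose to ``substitute Morrey/BMO-type bounds'', but neither gives the required $L^q$ bound through Hölder's inequality.
\begin{itemize}
\item Morrey yields Hölder seminorms, which is the $\theta=1$, $1/q<0$ regime. Interpolating such a seminorm against $L^r$ is a separate argument, not an application of Hölder's inequality.
\item BMO does not embed in $L^\infty$.
\item For some admissible data, e.g.\ $m=1$, $p=r=\infty$, $\theta>0$, one has $1/q<0$ even with $\theta<1$, and there the statement as transcribed is meaningless.
\end{itemize}
Never invoking $\theta=1$ does not rescue the scheme, because you still need a valid upper endpoint to interpolate toward. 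True cases such as $j=0$, $m=1$, $p=n$, $r<\infty$, $\theta<1$ need a genuinely different device: the Gagliardo inequality applied to powers $|u|^{\gamma}$, or a pointwise estimate on balls of optimized radius.

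Worse, at one corner the claim is false, so no argument can cover ``all admissible $(p,r,j,m,\theta)$''. Take $j=0$, $m=1$, $p=n$, $r=\infty$, so that $1/q=0$ for every $\theta\in(0,1)$. Choose $B_\rho(x_0)\subset\Omega$ and set $u_k=\min\{1,\max\{0,\log(\rho/|x-x_0|)/\log k\}\}$. Then
\begin{itemize}
\item $\Vert u_k\Vert_{L^\infty}=1$;
\item $\Vert\nabla u_k\Vert_{L^n}^n=|S^{n-1}|(\log k)^{1-n}\to 0$;
\item $\Vert u_k\Vert_{L^s}\to 0$ for every finite $s$.
\end{itemize}
So the right-hand side tends to $0$ while the left-hand side stays $1$. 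Your remark that $\theta<1$ disposes of the exceptional case is therefore mistaken. The statement must first be restricted, at least to $1/q>0$ with this corner excluded.

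\textbf{Lower-order term.} After extension and Ehrling you are left with $\Vert u\Vert_{L^p}^\theta\Vert u\Vert_{L^r}^{1-\theta}\le C\Vert u\Vert_{L^{\max\{p,r\}}}$. Hölder on a bounded domain runs the other way: $\Vert u\Vert_{L^s}\le C\Vert u\Vert_{L^{s'}}$ for $s\le s'$. So you cannot lower the exponent, and when $\max\{p,r\}=\infty$ you never reach a finite $s$.

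The standard fix is to apply the extension argument to $u-\Pi u$, where $\Pi$ is a projection onto polynomials of degree $<m$ that is bounded on $L^1(\Omega)$.
\begin{itemize}
\item The Poincaré (Bramble--Hilbert) inequality $\Vert u-\Pi u\Vert_{W^{m,p}(\Omega)}\le C\Vert D^m u\Vert_{L^p(\Omega)}$ replaces your Ehrling step.
\item One also has $\Vert u-\Pi u\Vert_{L^r}\le C\Vert u\Vert_{L^r}$.
\item The polynomial part contributes $\Vert D^\gamma\Pi u\Vert_{L^q}\le C\Vert u\Vert_{L^1}\le C\Vert u\Vert_{L^s}$ for every $s\ge 1$.
\end{itemize}

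\textbf{One-dimensional lemma.} Your argument integrates $|u'|^q$ by parts and then applies Hölder. That requires $|u'|^{q-2}\in L^{q/(q-2)}$, i.e.\ $q\ge 2$, equivalently $1/p+1/r\le 1$. For $1/p+1/r>1$ the Hölder exponent is negative and the step fails; this range includes the $p=1$ cases you explicitly want to cover. Nirenberg instead uses a local bound on an interval $I\ni x$ of length $\lambda$,
$|u'(x)|\le C\bigl(\lambda^{-1-1/r}\Vert u\Vert_{L^r(I)}+\lambda^{1-1/p}\Vert u''\Vert_{L^p(I)}\bigr)$,
with $\lambda$ chosen adaptively, followed by a covering argument.
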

    Applying the above inequality we obtain the following interpolation result:
    \begin{corollary}\label{sobolev-interpolation-1}
        Let $m\in \NN$ and assume that $u\in W^{m,\infty}(\Omega)$. Then the following interpolation inequality holds:
        \begin{align*}
            \Vert u\Vert_{L^\infty(\Omega)} \leq C\Vert u\Vert_{W^{m,\infty}(\Omega)}^{1-\theta} \Vert u\Vert_{L^2(\Omega)}^\theta
        \end{align*}
        where $\theta = 2m/(n + 2m)$ and $C>0$ is a constant independent of $u$.
    \end{corollary}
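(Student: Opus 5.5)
We apply the Gagliardo--Nirenberg inequality with $j=0$, $p=\infty$, $r=2$, $q=\infty$, and the top-order derivative $D^m u$ measured in $L^\infty$. The exponent relation $1/q = j/n + \theta(1/p - m/n) + (1-\theta)/r$ then becomes
\begin{align*}
0 = -\theta\, m/n + (1-\theta)/2 .
\end{align*}
Solving gives $(1-\theta)/2 = \theta m/n$, so $n(1-\theta) = 2m\theta$ and $\theta_{GN} = n/(n+2m)$. The admissibility condition $j/m = 0 \le \theta_{GN} < 1$ holds trivially. This $\theta_{GN}$ is the exponent placed on $\Vert D^m u\Vert_{L^\infty}$. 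Hence the exponent on $\Vert u\Vert_{L^2}$ is $1-\theta_{GN} = 2m/(n+2m)$, which matches the statement's $\theta$ (the roles are swapped relative to the theorem's notation). We obtain
\begin{align*}
\Vert u\Vert_{L^\infty(\Omega)} \le C\Vert D^m u\Vert_{L^\infty(\Omega)}^{n/(n+2m)}\Vert u\Vert_{L^2(\Omega)}^{2m/(n+2m)} + C\Vert u\Vert_{L^{s}(\Omega)}
\end{align*}
for some finite $1\le s\le \max\{p,r\}=\infty$. We choose $s=2$.

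Two routine adjustments finish the argument. First, bound $\Vert D^m u\Vert_{L^\infty}$ by $\Vert u\Vert_{W^{m,\infty}}$, which gives the main term in the desired form. Second, absorb the lower-order term $C\Vert u\Vert_{L^2}$. Since $\Omega$ is bounded, $\Vert u\Vert_{L^2}\le |\Omega|^{1/2}\Vert u\Vert_{L^\infty}\le C\Vert u\Vert_{W^{m,\infty}}$. Therefore
\begin{align*}
\Vert u\Vert_{L^2} = \Vert u\Vert_{L^2}^{\theta}\Vert u\Vert_{L^2}^{1-\theta} \le C\Vert u\Vert_{L^2}^{\theta}\Vert u\Vert_{W^{m,\infty}}^{1-\theta},
\end{align*}
with $\theta = 2m/(n+2m)$. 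Adding the two terms yields the claim.

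The main obstacle is the endpoint case $q=p=\infty$. Gagliardo--Nirenberg can fail at some endpoints, but the exceptional case in Nirenberg's theorem is $1<p<\infty$ with $m-j-n/p$ a nonnegative integer, which forces $\theta=j/m$. Here $p=\infty$, so that case does not arise, and in any event $\theta_{GN}>0=j/m$. If one wants to avoid relying on the endpoint, an alternative is a direct argument. Take $x_0$ with $|u(x_0)|$ close to $\Vert u\Vert_\infty$. By Taylor expansion, $|u|\ge \Vert u\Vert_\infty/2$ on a ball of radius $\rho\sim(\Vert u\Vert_\infty/\Vert u\Vert_{W^{m,\infty}})^{1/m}$ intersected with $\Omega$. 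This step uses a polynomial-remainder (Markov-type) estimate when $m>1$, and the smooth boundary to ensure the ball meets $\Omega$ in a set of measure $\gtrsim\rho^n$. Then $\Vert u\Vert_{L^2}^2\gtrsim \Vert u\Vert_\infty^2\rho^n$, which rearranges to the same exponents. When $\rho$ would exceed the domain scale, the bound follows instead from the trivial estimate $\Vert u\Vert_{\infty}\lesssim\Vert u\Vert_{L^2}$ in that regime, combined with the absorption step above.
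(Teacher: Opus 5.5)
Your proof is correct and follows the route the paper intends: the paper only says the corollary follows by applying the stated Gagliardo--Nirenberg inequality. Your choice $j=0$, $p=q=\infty$, $r=2$, $s=2$ gives exponent $n/(n+2m)$ on $\Vert D^m u\Vert_{L^\infty(\Omega)}$ and hence $2m/(n+2m)$ on $\Vert u\Vert_{L^2(\Omega)}$, and absorbing the lower-order term via $\Vert u\Vert_{L^2(\Omega)}\le C\Vert u\Vert_{L^2(\Omega)}^{\theta}\Vert u\Vert_{W^{m,\infty}(\Omega)}^{1-\theta}$ supplies exactly the details the paper omits. One small inaccuracy in your side remark: Nirenberg's exceptional case excludes $\theta=1$ rather than forcing $\theta=j/m$, but this does not matter here, since the paper's version already requires $\theta<1$ and permits $p=\infty$.
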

    \begin{proposition}[{\cite[proposition 1.5.3]{Cherrier2012}}]\label{sobolev-interpolation-2}
        Let $s_2 \geq s \geq s_1 \geq 0$, and $0 \leq \theta \leq 1$ be such that $s = (1 - \theta)s_1 + \theta s_2$. Then, for all $u \in H^{s_2}(\Omega)$, the inequality
        \begin{align*}
            \Vert u\Vert_{H^s(\Omega)} \leq C\Vert u\Vert_{H^{s_1}(\Omega)}^{1-\theta} \Vert u\Vert_{H^{s_2}(\Omega)}^\theta
        \end{align*}
        holds with $C > 0$ independent of $u$.
    \end{proposition}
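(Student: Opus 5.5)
The plan is to reduce to the whole space $\RR^n$, where the inequality follows from Hölder's inequality on the Fourier side. We then transfer it back to $\Omega$ with an extension operator that is bounded on every Sobolev scale at once.

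\textbf{Step 1 (whole space).} For $v\in H^{s_2}(\RR^n)$ with norm $\Vert v\Vert_{H^t}^2=\int \langle\xi\rangle^{2t}\vert\hat v(\xi)\vert^2\,d\xi$, where $\langle\xi\rangle=(1+\vert\xi\vert^2)^{1/2}$, write
\begin{align*}
\langle\xi\rangle^{2s}\vert\hat v\vert^2=\bigl(\langle\xi\rangle^{2s_1}\vert\hat v\vert^2\bigr)^{1-\theta}\bigl(\langle\xi\rangle^{2s_2}\vert\hat v\vert^2\bigr)^{\theta}.
\end{align*}
Apply Hölder's inequality with exponents $1/(1-\theta)$ and $1/\theta$; the endpoints $\theta\in\{0,1\}$ are trivial. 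This gives $\Vert v\Vert_{H^s(\RR^n)}\le \Vert v\Vert_{H^{s_1}(\RR^n)}^{1-\theta}\Vert v\Vert_{H^{s_2}(\RR^n)}^{\theta}$ with constant $1$.

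\textbf{Step 2 (extension).} Since $\partial\Omega$ is smooth, take a universal extension operator $E$ (Stein's, or Seeley's reflection construction of sufficiently high order). It satisfies $Eu|_\Omega=u$ and $E\in B(H^t(\Omega),H^t(\RR^n))$ for all $0\le t\le s_2$. Use the convention that $H^t(\Omega)$ carries the restriction (quotient) norm $\inf\{\Vert v\Vert_{H^t(\RR^n)}: v|_\Omega=u\}$. Then for $u\in H^{s_2}(\Omega)$,
\begin{align*}
\Vert u\Vert_{H^s(\Omega)}\le \Vert Eu\Vert_{H^s(\RR^n)}\le \Vert Eu\Vert_{H^{s_1}(\RR^n)}^{1-\theta}\Vert Eu\Vert_{H^{s_2}(\RR^n)}^{\theta}\le C\Vert u\Vert_{H^{s_1}(\Omega)}^{1-\theta}\Vert u\Vert_{H^{s_2}(\Omega)}^{\theta}.
\end{align*}
Here $C=\Vert E\Vert_{s_1}^{1-\theta}\Vert E\Vert_{s_2}^{\theta}$ is independent of $u$. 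If instead $H^t(\Omega)$ is normed intrinsically (integer $t$ via derivatives, fractional $t$ via Slobodeckij seminorms), the argument is unchanged, because these norms are equivalent to the restriction norms on smooth bounded domains.

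\textbf{Main obstacle.} The delicate point is the existence of a \emph{single} operator $E$ bounded simultaneously on $H^{s_1}$ and $H^{s_2}$, including fractional orders. A separate extension for each $t$ would not work, since the inequality needs the same $Eu$ on both sides. I would first use Stein's operator, which is bounded on $W^{k,2}$ for every integer $k\ge 0$ on Lipschitz domains. Its boundedness on fractional $H^t$ then follows by complex interpolation, because $[H^{k}(\RR^n),H^{k+1}(\RR^n)]_\vartheta=H^{k+\vartheta}(\RR^n)$ and likewise for the retract spaces on $\Omega$. With that in hand, Steps 1 and 2 complete the proof.
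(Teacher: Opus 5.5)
Your proof is correct. In Step 1 the powers of $\langle\xi\rangle$ and of $\vert\hat v\vert^2$ both balance, so H\"older with exponents $1/(1-\theta)$ and $1/\theta$ gives the whole-space inequality with constant $1$. In Step 2 you rightly insist on a \emph{single} extension operator, which gives $C=\Vert E\Vert_{s_1}^{1-\theta}\Vert E\Vert_{s_2}^{\theta}$ for the quotient norms.

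The paper gives no argument of its own here; it imports the inequality by citation from Cherrier--Milani. The natural comparison is therefore the standard abstract route. One first shows $H^s(\Omega)=[H^{s_1}(\Omega),H^{s_2}(\Omega)]_\theta$, using the retraction $u\mapsto u|_\Omega$ with co-retraction $E$, and then applies the general bound $\Vert u\Vert_{[X_0,X_1]_\theta}\le\Vert u\Vert_{X_0}^{1-\theta}\Vert u\Vert_{X_1}^{\theta}$. The retract machinery you invoke to make $E$ bounded on fractional orders already yields this identification. So for non-integer endpoints your Fourier--H\"older step is essentially a special case of the abstract inequality, and the two routes cost about the same.

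Your route genuinely pays off when $s_1,s_2$ are integers, which covers the paper's actual use ($s_1=0$, middle index $1$, $s_2=s+1$ with $s\in\NN$). In that case you need only Stein's operator on $L^2$ and $H^{s+1}$. The middle-index bound $\Vert u\Vert_{H^1(\Omega)}\le\Vert Eu\Vert_{H^1(\RR^n)}$ is then immediate, and no interpolation theory is needed at all.
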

    \subsection{Banach Inverse Function Theory}\label{app:banach}
    In this section, we provide a short proof of the nonlinear local inverse theorem given in \cite{Stefanov2009}, here written in \Cref{inverse-function-theorem}, which controls the nonlinear inverse stability of a function between Banach spaces in terms of the linearized map.
    \begin{proof}[Proof of~\Cref{{inverse-function-theorem}}]
        The claim follows directly from the following estimate. Given any $x\in U$ and $h\in X$ such that $x+h\in U$,
        \begin{align*}
            \Vert h\Vert_{X} &\leq C_1\Vert h \Vert_{X''}^{1-\mu_1}\Vert h\Vert_{X'}^{\mu_1} \leq C_1C_3^{\mu_1} \Vert h\Vert_{X''}^{1-\mu_1}\Vert\delta F_x(h)\Vert_{Y'}^{\mu_1}\\
            &\leq C_1^{1+\mu_1}C_3^{\mu_1} \Vert h\Vert_{X''}^{1-\mu_1}\Vert \delta F_x(h)\Vert_{Y''}^{\mu_1(1-\mu_2)}\Vert \delta F_x(h)\Vert_{Y}^{\mu_1\mu_2}\\
            &\leq \left(C_1^{1+\mu_1}C_2^{\mu_1(1-\mu_2)}C_3^{\mu_1} \Vert h\Vert_{X''}^{1-\mu_1\mu_2}\right)\Vert F(x + h) - F(x) - R_{x}(h)\Vert_{Y}^{\mu_1\mu_2},
        \end{align*}
        The constants can be bounded by restricting $h$ in $X''$. For the second term, we have
        \begin{align*}
            \Vert F(x + h) - F(x) - R_{x}(h)\Vert_{Y}^{\mu_1\mu_2} &\leq \Vert F(x + h) - F(x)\Vert_Y^{\mu_1\mu_2} + \Vert R_x(h)\Vert_Y^{\mu_1\mu_2}\\
            &\leq \Vert F(x + h) - F(x)\Vert_Y^{\mu_1\mu_2} + C_2^{\mu_1\mu_2} \Vert h\Vert_X^{2\mu_1\mu_2}
        \end{align*}
        Note that the last error term in $h$ is of degree $2\mu_1\mu_2 = 2\alpha > 1$. Subtracting it from the left-hand side and restricting to a small $X''$-neighborhood of $x_0$ proves the claim.
    \end{proof}
{
\bibliographystyle{alpha}
\bibliography{ref.bib}
}
\end{document}